\newcommand\normal{{\widehat{n}}}
\DeclareMathSymbol{\star}{\mathord}{letters}{"3F}
\newcommand{\lVERT}{\lvert\mkern-2mu\lvert\mkern-2mu\lvert}
\newcommand{\rVERT}{\rvert\mkern-2mu\rvert\mkern-2mu\rvert}
\g@addto@macro\bfseries{\boldmath}
\theoremstyle{plain} 
\newtheorem{theorem}{Theorem}[section]
\newtheorem{lemma}[theorem]{Lemma}
\newtheorem{corollary}[theorem]{Corollary}
\newtheorem{proposition}[theorem]{Proposition}
\theoremstyle{definition}
\newtheorem{definition}[theorem]{Definition}
\newtheorem{example}[theorem]{Example}
\newtheorem{assumption}{Assumption}
\theoremstyle{remark}
\newtheorem{remark}[theorem]{Remark}
\begin{document}

\title[Hybridization and postprocessing in finite element exterior calculus]{%
  Hybridization and postprocessing\\in finite element exterior calculus}

\author{Gerard Awanou}
\address{Department of Mathematics, Statistics, and Computer Science\\
  University of Illinois, Chicago}
\email{awanou@uic.edu}

\author{Maurice Fabien}
\author{Johnny Guzm\'an}
\address{Division of Applied Mathematics, Brown University}
\email{fabien@brown.edu}
\email{johnny\_guzman@brown.edu}

\author{Ari Stern}
\address{Department of Mathematics and Statistics, Washington University in St.~Louis}
\email{stern@wustl.edu}

\begin{abstract}
  We hybridize the methods of finite element exterior calculus for the
  Hodge--Laplace problem on differential $k$-forms in
  $\mathbb{R}^n$. In the cases $ k = 0 $ and $ k = n $, we recover
  well-known primal and mixed hybrid methods for the scalar Poisson
  equation, while for $ 0 < k < n $, we obtain new hybrid finite
  element methods, including methods for the vector Poisson equation
  in $ n = 2 $ and $ n = 3 $ dimensions. We also generalize
  \citeauthor{Stenberg1991} postprocessing from $ k = n $ to arbitrary
  $k$, proving new superconvergence estimates. Finally, we discuss how
  this hybridization framework may be extended to include nonconforming
  and hybridizable discontinuous Galerkin methods.
\end{abstract}

\vspace*{-\baselineskip}
\maketitle
\vspace*{-\baselineskip}

\section{Introduction}

Finite element exterior calculus (FEEC) is a powerful framework that
unifies the analysis of several families of conforming finite element
methods for problems involving Laplace-type operators
(\citet*{ArFaWi2006,ArFaWi2010,Arnold2018}). These include the classic
``continuous Galerkin'' Lagrange finite element method and the
Raviart--Thomas (RT) \citep{RaTh1977} and Brezzi--Douglas--Marini
(BDM) \citep{BrDoMa1985} mixed methods for the scalar Poisson
equation, as well as mixed methods based on N\'ed\'elec
elements~\citep{Nedelec1980,Nedelec1986} for the $2$- and
$3$-dimensional vector Poisson equation. In FEEC, these are all seen
as finite element methods for the Hodge--Laplace operator on
differential $k$-forms in $ \mathbb{R}^n $, where scalar fields are
identified with $0$- and $n$-forms and vector fields with $1$- and
$ (n-1) $-forms.

In this paper, we hybridize FEEC for arbitrary dimension $n$ and form
degree $k$. That is, we construct hybrid finite element methods using
discontinuous spaces of differential forms, enforcing continuity and
boundary conditions using Lagrange multipliers on the element
boundaries. The solutions agree with those of the original, non-hybrid
FEEC methods, and the Lagrange multipliers are seen to correspond to
weak tangential and normal traces. This hybrid formulation enables
static condensation: since only the Lagrange multipliers are globally
coupled, the remaining internal degrees of freedom can be eliminated
using an efficient local procedure, and the resulting Schur complement
system can be substantially smaller than the original one. We also
present a generalization of Stenberg postprocessing
\citep{Stenberg1991}, which for $ 0 < k < n $ is shown to give new
improved estimates.

The special cases $ k = 0 $ and $ k = n $ are shown to recover known
results on hybridization and postprocessing for the scalar Poisson
equation. In particular, the case $ k = n $ corresponds to the
hybridized RT \citep{ArBr1985} and BDM \citep{BrDoMa1985} methods, and
the postprocessing procedure is precisely that of
\citet{Stenberg1991}. The case $ k = 0 $ corresponds to the more
recent hybridization of the continuous Galerkin method by
\citet*{CoGoWa2007}.

The hybrid and postprocessing schemes in the remaining cases
$ 0 < k < n $ are new and, to the best of our knowledge, have not
appeared in the literature even for the vector Poisson equation when
$ n = 2 $ or $ n = 3$.  In particular, the hybridization of
N\'ed\'elec edge elements is different from that in \citet{CoGo2005b}:
here, the Lagrange multipliers are simply traces of standard elements,
rather than living in a space of ``jumps.'' We expect these new
methods to be especially useful in computational electromagnetics,
where N\'ed\'elec elements are ubiquitous and the differential forms
point of view has provided significant insight
(cf.~\citet{Hiptmair2002}).

While we restrict our attention primarily to hybrid methods for
conforming simplicial meshes, we remark that the framework developed
here has the potential to be applied to other types of domain
decomposition methods, including methods on cubical meshes,
nonconforming meshes, mortar methods, etc. We also discuss briefly how
the unified hybridization framework of \citet*{CoGoLa2009}, which
includes hybridizable discontinuous Galerkin (HDG) methods, may also
be generalized to the Hodge--Laplace problem for $ 0 < k < n $.

\subsection{Why hybridize?} There are several theoretical and
practical benefits of hybridization:
\begin{itemize}
\item \emph{additional information about solutions}: The Lagrange
  multiplier functions often correspond to weak boundary traces of
  solution components, even though the numerical solution may not be
  regular enough for a trace to exist in the usual sense (e.g., the
  trace of an $ L ^2 $ function or normal derivative of an $ H ^1 $
  function).

\item \emph{static condensation}: Degrees of freedom for discontinuous
  function spaces can be locally eliminated. The resulting Schur
  complement only involves boundary degrees of freedom for the
  Lagrange multipliers, so it can be substantially smaller than the
  original global problem.

\item \emph{local postprocessing and superconvergence}: The numerical
  solution may be efficiently ``postprocessed'' by using the boundary
  traces to solve a local problem on each element, resulting in an
  improved approximation compared to the original solution.
  
\end{itemize}

Seminal work on hybridization of mixed finite element methods was done
by \citet{FraeijsDeVeubeke1965}. For the scalar Poisson equation, the
RT method was hybridized in this manner by \citet{ArBr1985}, who
introduced the notion of postprocessing.  Hybridization and
postprocessing were also discussed in the original paper introducing
the BDM method \citep{BrDoMa1985}, and an interesting characterization
of the Lagrange multipliers for the hybridized RT and BDM methods
appears in \citet{CoGo2004}. A refined local postprocessing procedure
for mixed methods, which can be applied with or without hybridization,
was given by \citet{Stenberg1991}; see also \citet{GaNo1989}, who
discovered this independently (cf.~\citep[eqs.~4.14--4.15]{GaNo1989}),
as well as \citet{BrXu1989}.

More recently, \citet*{CoGoWa2007} hybridized the continuous Galerkin
method, using an approach similar to the ``three-field domain
decomposition method'' of \citet{BrMa1994}, and showed that static
condensation yields the same condensed system as that obtained by the
original, non-hybrid static condensation procedure of
\citet{Guyan1965}. Even more recently, \citet*{CoGoLa2009} introduced
an important unified hybridization framework that includes the above
methods, as well as nonconforming and HDG methods, for the scalar
Poisson equation. A survey of historical and recent developments
appears in \citet{Cockburn2016}.

\subsection{Organization of the paper}

The paper is organized as follows:
\begin{itemize}
\item \cref{sec:background} recalls the basic machinery and
  terminology of differential forms, the Hodge--Laplace problem, and
  FEEC. This includes a discussion of tangential and normal traces,
  which play an important role throughout the paper.

\item \cref{sec:dd} presents a domain decomposition of the
  Hodge--Laplace problem. The variational form of this problem
  involves broken spaces of differential forms, along with boundary
  traces that act as Lagrange multipliers enforcing interelement
  continuity and boundary conditions.

\item \cref{sec:hybrid} develops hybrid finite element methods for
  the Hodge--Laplace problem, based on the domain-decomposed
  variational principle from the previous section. We prove that these
  are hybridized versions of the FEEC methods, show how static
  condensation can be used to reduce the size of the global system,
  and develop error estimates for the hybrid variables.

\item \cref{sec:postprocessing} generalizes the postprocessing
  procedure of \citet{Stenberg1991} from $ k = n $ to arbitrary
  $k$. This procedure only uses the statically condensed variables, so
  it can be applied immediately after solving the condensed system, or
  it can be applied to solutions obtained by ordinary finite element
  methods without hybridization. In addition to known superconvergence
  results for $ k = n $, we give new improved error estimates for
  $ k < n $.

\item \cref{sec:examples} gives concrete illustrations of the hybrid
  and postprocessing methods when $ n = 3 $, using the language of
  vector calculus and classic families of finite elements.

\item \cref{sec:numerical} presents numerical experiments, confirming
  the error estimates of \cref{sec:hybrid,sec:postprocessing}.

\item Finally, \cref{sec:hdg} presents an extension of the framework
  of \citet*{CoGoLa2009}, whereas the previous sections only address
  conforming methods. This lays the groundwork for hybridization of
  nonconforming and discontinuous Galerkin methods for FEEC, although
  we postpone the analysis of such methods for future work.
\end{itemize} 

\section{Background: differential forms and finite element exterior calculus}
\label{sec:background}

In this section, we quickly recall the exterior calculus of
differential forms, the Hodge--Laplace problem, and FEEC, in order to
lay the foundation and fix the notation for the subsequent
sections. We refer to \citet*{ArFaWi2006,ArFaWi2010,Arnold2018}, and
references therein for a comprehensive treatment. We also discuss
tangential and normal traces of differential forms, which will play an
important role in domain decomposition and hybridization. Our
treatment of these traces follows that in \citet{Weck2004} (see also
\citet{KuAu2012}), which extended work of
\citet*{BuCi2001a,BuCi2001b,BuCoSh2002} for vector fields in
$\mathbb{R}^3$.

\subsection{Exterior calculus of differential forms}

Let $ \Omega \subset \mathbb{R}^n $ be a bounded Lipschitz domain, and
denote by $ \Lambda ^k (\Omega) $ the space of smooth differential
$k$-forms on $\Omega$, where $ k = 0, \ldots, n $. We assume that the
reader is familiar with the following basic operations of exterior
calculus:
\begin{itemize}
\item the wedge product
  $ \wedge \colon \Lambda ^k (\Omega) \times \Lambda ^\ell (\Omega)
  \rightarrow \Lambda ^{ k + \ell } (\Omega) $,

\item the (Euclidean) Hodge star isomorphism
  $ \star \colon \Lambda ^k (\Omega) \rightarrow \Lambda ^{ n - k }
  (\Omega) $,
\item the exterior derivative $ \mathrm{d} \colon \Lambda ^k (\Omega) \rightarrow \Lambda ^{ k + 1 } (\Omega) $,
\item the codifferential $ \delta \coloneqq ( - 1 ) ^k \star ^{-1} \mathrm{d} \star \colon \Lambda ^k (\Omega) \rightarrow \Lambda ^{ k -1 } (\Omega) $,
\item the Hodge--Laplace operator
  $ L \coloneqq \mathrm{d} \delta + \delta \mathrm{d} \colon \Lambda
  ^k (\Omega) \rightarrow \Lambda ^k (\Omega) $.
\end{itemize}
These are graded operators, but we suppress the form degree for
notational simplicity, e.g., writing $\mathrm{d}$ rather than
$ \mathrm{d} ^k $. From the Leibniz rule for $ \mathrm{d} $ and
definition of $ \delta $, we have the important identity
\begin{equation}
  \label{eqn:d_delta_leibniz}
  \mathrm{d} ( \tau  \wedge \star v ) =  \mathrm{d} \tau \wedge \star v - \tau  \wedge \star \delta v ,
\end{equation}
where $ \tau \in \Lambda ^{ k -1 } (\Omega) $ and
$ v \in \Lambda ^k (\Omega) $.

The Hilbert space $ L ^2 \Lambda ^k (\Omega) $ is the completion of
$ \Lambda ^k (\Omega) $ with respect to the $ L ^2 $ inner product
$ ( v, w ) _\Omega \coloneqq \int _\Omega v \wedge \star w $, whose
associated norm is denoted $ \lVert \cdot \rVert _\Omega $. Taking
$ \mathrm{d} $ in the sense of distributions allows it to be extended
to a closed, densely defined operator with domain
\begin{equation*}
  H \Lambda ^k (\Omega) \coloneqq \bigl\{ v \in L ^2 \Lambda ^k (\Omega) : \mathrm{d} v \in L ^2 \Lambda ^{ k + 1 } (\Omega) \bigr\} ,
\end{equation*}
which is itself a Hilbert space with the graph inner product
$ ( v, w ) _{ H \Lambda ^k (\Omega) } \coloneqq ( v , w ) _\Omega + (
\mathrm{d} v , \mathrm{d} w ) _\Omega $. The subspace
$ \mathring{ H } \Lambda ^k (\Omega) \subset H \Lambda ^k (\Omega) $
is defined to be the closure of $ C _0 ^\infty \Lambda ^k (\Omega) $,
the space of smooth $k$-forms with compact support in $\Omega$.
Likewise, $\delta$ may be extended to a closed, densely defined
operator with domain
\begin{equation*}
  H ^\ast \Lambda ^k (\Omega) \coloneqq \bigl\{ v \in L ^2 \Lambda ^k (\Omega) : \delta v \in L ^2 \Lambda ^{ k -1 } (\Omega) \bigr\} = \star H \Lambda ^{ n - k } (\Omega) ,
\end{equation*} 
which is a Hilbert space with the graph inner product
$ ( v, w ) _{ H ^\ast \Lambda ^k (\Omega) } \coloneqq ( v, w ) _\Omega
+ ( \delta v , \delta w ) _\Omega $, and the subspace
$ \mathring{ H } ^\ast \Lambda ^k (\Omega) = \star \mathring{ H }
\Lambda ^{ n - k } (\Omega) \subset H ^\ast \Lambda ^k (\Omega) $ is
the closure of $ C _0 ^\infty \Lambda ^k (\Omega) $.

\subsection{Tangential and normal traces}
\label{sec:traces}
The restriction of a differential form to the boundary
$ \partial \Omega $ is encoded in a pair of differential forms on
$ \partial \Omega $, called the tangential trace and normal
trace. This is analogous to decomposing a vector field into its
tangential and normal components at the boundary.

We begin with the case of smooth differential forms, where the
boundary $ \partial \Omega $ is also smooth. The \emph{trace map}
$ \operatorname{tr} \colon \Lambda ^k ( \Omega ) \rightarrow \Lambda
^k ( \partial \Omega ) $ is defined to be the pullback of $k$-forms by
the inclusion $ \partial \Omega \hookrightarrow \Omega $, i.e.,
$ \operatorname{tr} v \in \Lambda ^k ( \partial \Omega ) $ is just the
restriction of $ v \in \Lambda ^k (\Omega) $ to vectors tangent to the
boundary. Denote the Hodge star on $ \partial \Omega $ by
$ \widehat{ \star } $ and the associated $ L ^2 $ inner product by
$ \langle \cdot , \cdot \rangle _{ \partial \Omega } $.

\begin{definition}[tangential and normal traces]
  Given $ v \in \Lambda ^k (\Omega) $,
  \begin{equation*}
    v ^{\mathrm{tan}} \coloneqq \operatorname{tr} v \in \Lambda ^k ( \partial \Omega ) , \qquad v ^{\mathrm{nor}} \coloneqq \widehat{ \star } ^{-1} \operatorname{tr} \star v \in \Lambda ^{k-1} ( \partial \Omega ) .
  \end{equation*} 
\end{definition}

These definitions allow a particularly elegant expression of the
integration by parts formula for differential forms. The following
result is standard, but the proof is short and illuminates the
definition of the normal trace.

\begin{proposition}
  If $ \tau \in \Lambda ^{ k -1 } (\Omega) $ and
  $ v \in \Lambda ^k (\Omega) $, then we have the integration by parts
  formula
  \begin{equation}
    \label{eqn:ibp}
    \langle \tau ^{\mathrm{tan}} , v ^{\mathrm{nor}} \rangle _{ \partial \Omega } = ( \mathrm{d} \tau , v ) _\Omega - ( \tau, \delta v ) _\Omega .
  \end{equation} 
\end{proposition}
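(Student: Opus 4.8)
The plan is to integrate the Leibniz-type identity \eqref{eqn:d_delta_leibniz} over $\Omega$ and identify each of the three resulting terms. Since $\alpha \wedge \star \beta \in \Lambda ^{ n -1 } (\Omega) $ has compact support by hypothesis, Stokes' theorem applies and gives
\[
  \int _\Omega \mathrm{d} ( \alpha \wedge \star \beta ) = \int _{ \partial \Omega } \operatorname{tr} ( \alpha \wedge \star \beta ) .
\]
On the other hand, \eqref{eqn:d_delta_leibniz} rewrites the left-hand integrand as $ \mathrm{d} \alpha \wedge \star \beta - \alpha \wedge \star \delta \beta $, and by the definition of the $ L ^2 $ inner product its integral over $\Omega$ equals $ ( \mathrm{d} \alpha, \beta ) _\Omega - ( \alpha, \delta \beta ) _\Omega $ (both pairings make sense, since $ \mathrm{d} \alpha , \beta \in \Lambda ^k (\Omega) $ while $ \alpha , \delta \beta \in \Lambda ^{ k -1 } (\Omega) $). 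So the whole proposition reduces to showing that the boundary integral $ \int _{ \partial \Omega } \operatorname{tr} ( \alpha \wedge \star \beta ) $ equals $ \langle \alpha ^{\mathrm{tan}} , \beta ^{\mathrm{nor}} \rangle _{ \partial \Omega } $.

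For that last step I would use that the trace map, being a pullback, is an algebra homomorphism for the wedge product, so $ \operatorname{tr} ( \alpha \wedge \star \beta ) = \operatorname{tr} \alpha \wedge \operatorname{tr} ( \star \beta ) = \alpha ^{\mathrm{tan}} \wedge \operatorname{tr} ( \star \beta ) $. By the very definition of the normal trace, $ \operatorname{tr} ( \star \beta ) = \widehat{\star} \beta ^{\mathrm{nor}} $, where $ \widehat{\star} $ is the Hodge star of the $ (n-1) $-dimensional manifold $ \partial \Omega $, mapping $ \Lambda ^{ k -1 } ( \partial \Omega ) $ into $ \Lambda ^{ (n-1) - (k-1) } ( \partial \Omega ) = \Lambda ^{ n - k } ( \partial \Omega ) $. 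Hence $ \operatorname{tr} ( \alpha \wedge \star \beta ) = \alpha ^{\mathrm{tan}} \wedge \widehat{\star} \beta ^{\mathrm{nor}} $, and the defining property of $ \widehat{\star} $ gives, pointwise on $ \partial \Omega $, $ \alpha ^{\mathrm{tan}} \wedge \widehat{\star} \beta ^{\mathrm{nor}} = \langle \alpha ^{\mathrm{tan}} , \beta ^{\mathrm{nor}} \rangle \, \iota _{ \mathbf{n} } \mathrm{vol} $. Integrating over $ \partial \Omega $ then yields exactly $ \langle \alpha ^{\mathrm{tan}} , \beta ^{\mathrm{nor}} \rangle _{ \partial \Omega } $, completing the argument.

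I do not expect a genuine obstacle here: the real content has been front-loaded into the choice $ \delta \coloneqq ( - 1 ) ^k \star ^{-1} \mathrm{d} \star $ — which is precisely what makes \eqref{eqn:d_delta_leibniz} hold — and into defining $ \beta ^{\mathrm{nor}} $ through $ \widehat{\star} ^{-1} $, so that $ \operatorname{tr} ( \star \beta ) = \widehat{\star} \beta ^{\mathrm{nor}} $ comes out as a tautology. The one thing that requires care is the bookkeeping of form degrees on $ \Omega $ versus on the boundary: one must check that $ \operatorname{tr} ( \star \beta ) \in \Lambda ^{ n - k } ( \partial \Omega ) $ is $ \widehat{\star} $ applied to a $ (k-1) $-form, and that this degree $ k - 1 $ agrees with the degree of $ \alpha ^{\mathrm{tan}} = \operatorname{tr} \alpha $, so that the inner product $ \langle \alpha ^{\mathrm{tan}} , \beta ^{\mathrm{nor}} \rangle $ on $ \Lambda ^{ k -1 } ( \partial \Omega ) $ is even defined and $ \alpha ^{\mathrm{tan}} \wedge \widehat{\star} \beta ^{\mathrm{nor}} $ is a top-degree form on $ \partial \Omega $. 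It is also worth a sentence to note that the compact-support hypothesis is exactly what licenses Stokes' theorem and makes every integral above converge; the extension to non-smooth (e.g.\ polyhedral) boundaries, where the boundary pairing must be read as a duality pairing, is taken up in the next section.
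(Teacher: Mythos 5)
Your proposal is correct and is essentially the paper's own proof, just run in the opposite direction: the paper starts from $ \langle \alpha ^{\mathrm{tan}} , \beta ^{\mathrm{nor}} \rangle _{ \partial \Omega } $ and unwinds the definitions to reach $ \int _\Omega \mathrm{d} ( \alpha \wedge \star \beta ) $ via Stokes, then invokes \eqref{eqn:d_delta_leibniz}, whereas you start from \eqref{eqn:d_delta_leibniz} and work toward the boundary pairing. The chain of identities (naturality of the pullback under wedge, $ \operatorname{tr} \star \beta = \widehat{ \star } \beta ^{\mathrm{nor}} $ by definition of the normal trace, Stokes' theorem) is identical.
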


\begin{proof}
  Using the definitions of $ \tau ^{\mathrm{tan}} $ and
  $ v ^{\mathrm{nor}} $, we calculate
  \begin{equation*}
    \langle \tau ^{\mathrm{tan}} , v ^{\mathrm{nor}} \rangle _{ \partial \Omega } = \int _{ \partial \Omega  } \tau ^{\mathrm{tan}} \wedge \widehat{ \star } v ^{\mathrm{nor}} = \int _{ \partial \Omega } \operatorname{tr} \tau \wedge \operatorname{tr} \star v = \int _{ \partial \Omega } \operatorname{tr} ( \tau  \wedge \star v ) = \int _\Omega \mathrm{d} ( \tau  \wedge \star v ) ,
  \end{equation*}
  where the last step uses Stokes' theorem. Applying
  \eqref{eqn:d_delta_leibniz} completes the proof.
\end{proof}

\begin{table}
  \centering
  \renewcommand\arraystretch{1.25}
  \begin{tabular}{clll}
    \toprule
    $k$ & proxy field & tangential trace & normal trace\\
    \midrule $0$ & $ \varphi \in C ^\infty (\Omega) $ & $ \varphi \rvert _{ \partial \Omega } $ & 0 \\
    $1$ & $ v \in C ^\infty ( \Omega , \mathbb{R}^3  ) $ & $ v \rvert _{ \partial \Omega } - ( v \cdot \normal ) \normal  $ & $ v \cdot \normal $\\
    $2$ & $ w \in C ^\infty ( \Omega , \mathbb{R}^3  ) $ & $ ( w \cdot \normal ) \normal $ & $ w \times \normal $ \\
    $3$ & $ \psi \in C ^\infty (\Omega) $ & $0$ & $ \psi \normal $\\
    \bottomrule
  \end{tabular}
  \bigskip 
  \caption{Tangential and normal traces of differential forms on
    $ \Omega \subset \mathbb{R}^3 $, in terms of scalar and vector
    proxy fields.\label{tab:traces}}
\end{table}

An equivalent description of tangential and normal traces uses the
outer unit normal vector field $\normal$ and its associated
$1$-form $ \normal ^\flat = \normal _i \,\mathrm{d}x ^i
$. Letting $ \iota _\normal $ denote the interior product (or
contraction) with $\normal$, the Leibniz rule for this operator
gives the identity
\begin{equation*}
  v \rvert _{ \partial \Omega } = \iota _\normal ( \normal ^\flat \wedge v ) + \normal ^\flat \wedge ( \iota _\normal v ) .
\end{equation*}
We may then identify $ v ^{\mathrm{tan}} $ with the $k$-form
$ \iota _\normal ( \normal ^\flat \wedge v )$ and
$ v ^{\mathrm{nor}} $ with the $ ( k -1 )$-form
$ \iota _\normal v $.  When $ \Omega \subset \mathbb{R} ^3 $, the
correspondence of these traces to scalar and vector proxy fields is
given in \cref{tab:traces}, using the proxy operations for
$ \iota _{ \normal } $ and $ \normal ^\flat \wedge {} $, and
\eqref{eqn:ibp} recovers the familiar integration by parts formulas of
vector calculus.

\citet{Weck2004} showed that it is possible to extend the tangential
and normal traces so that a weak version of \eqref{eqn:ibp} holds for
$ \tau \in H \Lambda ^{ k -1 } (\Omega) $ and
$ v \in H ^\ast \Lambda ^k (\Omega) $, where $ \partial \Omega $ is
only assumed to be Lipschitz. We denote the trace spaces in which
$ \tau ^{\mathrm{tan}} $ and $ v ^{\mathrm{nor}} $ live by
$ \widehat{ H } \Lambda ^{k-1, \mathrm{tan}} (\partial \Omega )$ and
$ \widehat{ H } ^\ast \Lambda ^{k-1, \mathrm{nor}} (\partial \Omega
)$, respectively. These are generally subspaces of
$ H ^{ - 1/2 } \Lambda ^{ k -1 } ( \partial \Omega )$, but not
necessarily of $ L ^2 \Lambda ^{ k -1 } ( \partial \Omega )$, so
$ \langle \cdot , \cdot \rangle _{ \partial \Omega } $ should be
interpreted as a duality pairing extending the $ L ^2 $ inner product
on $ \partial \Omega $ \citep[Theorem 8]{Weck2004}. See
\citet{KuAu2012} for an excellent account of \citeauthor{Weck2004}'s
results and some concrete applications to
electromagnetics. \citet*{MiMiSh2008} obtain comparable results by
extending the alternative approach using $ \iota _{ \normal } $ and
$ \normal ^\flat \wedge {} $ described above.

The definitions of
$ \widehat{ H } \Lambda ^{k-1, \mathrm{tan}} (\partial \Omega )$ and
$ \widehat{ H } ^\ast \Lambda ^{k-1, \mathrm{nor}} (\partial \Omega )$
are somewhat technical, but thankfully, we may make use of
\citep[Theorems 5 and 7]{Weck2004}, which give isomorphisms
\begin{equation}
  \label{eqn:trace_quotient}
  \widehat{ H } \Lambda ^{k-1, \mathrm{tan}} (\partial \Omega ) \cong H \Lambda ^{ k -1 } (\Omega) / \mathring{ H } \Lambda ^{ k -1 } (\Omega) , \qquad \widehat{ H } ^\ast \Lambda ^{k-1, \mathrm{nor}} (\partial \Omega
) \cong H ^\ast \Lambda ^k (\Omega) / \mathring{ H } ^\ast \Lambda ^k (\Omega) .
\end{equation}
Therefore, we may treat the trace spaces as quotient spaces, equipped
with the quotient norms
\begin{equation*}
  \lVert \widehat{ \tau } ^{\mathrm{tan}} \rVert _{\mathrm{tan}, \partial \Omega } \coloneqq \inf \bigl\{ \lVert \tau \rVert _{ H \Lambda ^{ k -1 } (\Omega) } : \tau  ^{\mathrm{tan}} = \widehat{ \tau } ^{\mathrm{tan}} \bigr\} , \qquad   \lVert \widehat{ v } ^{\mathrm{nor}} \rVert _{\mathrm{nor}, \partial \Omega } \coloneqq \inf \bigl\{ \lVert v \rVert _{ H ^\ast \Lambda ^k (\Omega) } : v ^{\mathrm{nor}} = \widehat{ v } ^{\mathrm{nor}} \bigr\} .
\end{equation*}
These generalize the ``minimum energy extension'' quotient norms
discussed in \citet*[Section 2]{CaDeGo2016} for $ H ^1 $,
$ H ( \operatorname{curl} ) $, and $ H(\operatorname{div})$ traces in
$\mathbb{R}^3$. The next result, relating these norms to the duality
pairing, is a straightforward generalization of \citep[Lemma
2.2]{CaDeGo2016}.

\begin{lemma}
  \label{lem:duality_isometry}
  For all
  $ \widehat{ \tau } ^{\mathrm{tan}} \in \widehat{ H } \Lambda ^{k-1,
    \mathrm{tan}} (\partial \Omega )$ and
  $ \widehat{ v } ^{\mathrm{nor}} \in \widehat{ H } ^\ast \Lambda
  ^{k-1, \mathrm{nor}} (\partial \Omega ) $, we have the equalities
  \begin{equation*}
    \lVert \widehat{ \tau } ^{\mathrm{tan}} \rVert _{\mathrm{tan}, \partial \Omega } = \sup _{ \widehat{ v } ^{\mathrm{nor}} \neq 0 } \frac{ \langle \widehat{ \tau } ^{\mathrm{tan}} , \widehat{ v } ^{\mathrm{nor}} \rangle _{ \partial \Omega } }{ \lVert \widehat{ v } \rVert _{\mathrm{nor}, \partial \Omega } } , \qquad
    \lVert \widehat{ v } ^{\mathrm{nor}} \rVert _{\mathrm{nor}, \partial \Omega } = \sup _{ \widehat{ \tau } ^{\mathrm{tan}} \neq 0 } \frac{ \langle \widehat{ \tau } ^{\mathrm{tan}} , \widehat{ v } ^{\mathrm{nor}} \rangle _{ \partial \Omega } }{ \lVert \widehat{ \tau } \rVert _{\mathrm{tan}, \partial \Omega } }.
  \end{equation*}
  That is, the duality isomorphisms
  $ \widehat{ \tau } ^{\mathrm{tan}} \mapsto \langle \widehat{ \tau }
  ^{\mathrm{tan}} , \cdot \rangle _{ \partial \Omega } $ and
  $ \widehat{ v } ^{\mathrm{nor}} \mapsto \langle \cdot , \widehat{ v
  } ^{\mathrm{nor}} \rangle _{ \partial \Omega } $ are isometries.
\end{lemma}

\begin{proof}
  Given $ \widehat{ \tau } ^{\mathrm{tan}} $, the Riesz representation
  theorem gives a unique $ w \in H ^\ast \Lambda ^k (\Omega) $ such
  that
  \begin{equation*}
    ( w, v ) _\Omega + ( \delta w , \delta v ) _\Omega = \langle  \widehat{ \tau } ^{\mathrm{tan}} , v ^{\mathrm{nor}} \rangle _{ \partial \Omega } , \quad \forall v \in H ^\ast \Lambda ^k (\Omega) ,
  \end{equation*}
  so $ w + \mathrm{d} \delta w = 0 $ with
  $ ( -\delta w ) ^{\mathrm{tan}} = \widehat{ \tau } ^{\mathrm{tan}}
  $. Taking $ \tau = -\delta w \in H \Lambda ^{k-1} (\Omega) $, we
  have $ \tau + \delta \mathrm{d} \tau = 0 $ with
  $ \tau ^{\mathrm{tan}} = \widehat{ \tau } ^{\mathrm{tan}} $, so
  $ ( \tau, \phi ) _\Omega + ( \mathrm{d} \tau , \mathrm{d} \phi )
  _\Omega = 0 $ for all
  $ \phi \in \mathring{ H } \Lambda ^{ k -1 } (\Omega) $. This is
  precisely the variational problem satisfied uniquely by the
  minimum-$ H \Lambda $-norm extension of
  $ \widehat{ \tau } ^{\mathrm{tan}} $, so $\tau$ is this extension
  and
  $ \lVert \widehat{ \tau } ^{\mathrm{tan}} \rVert _{\mathrm{tan},
    \partial \Omega } = \lVert \tau \rVert _{ H \Lambda ^{k-1}
    (\Omega) } $. Since $ \tau = -\delta w $ and
  $ \mathrm{d} \tau = w $, we have
  $ \lVert \tau \rVert _{ H \Lambda ^{ k -1 } (\Omega) } = \lVert w
  \rVert _{ H ^\ast \Lambda ^k (\Omega) } $, and
  \begin{equation*}
    \lVert \widehat{ \tau } ^{\mathrm{tan}} \rVert _{\mathrm{tan}, \partial \Omega } = \lVert w \rVert _{ H ^\ast \Lambda ^k (\Omega) } = \sup _{ \substack{v \in H ^\ast \Lambda ^k (\Omega), \\ v \neq 0 } } \frac{ ( w, v ) _\Omega + ( \delta w , \delta v ) _\Omega }{ \lVert v \rVert _{ H ^\ast \Lambda ^k (\Omega) } } = \sup _{ \substack{v \in H ^\ast \Lambda ^k (\Omega), \\ v \neq 0 } } \frac{ \langle \widehat{ \tau } ^{\mathrm{tan}} , v ^{\mathrm{nor}} \rangle _{ \partial \Omega }  }{ \lVert v \rVert _{ H ^\ast \Lambda ^k (\Omega) } } .
  \end{equation*}
  For any $ v ^{\mathrm{nor}} = \widehat{ v } ^{\mathrm{nor}} $, the
  denominator is minimized when
  $ \lVert v \rVert _{ H ^\ast \Lambda ^k (\Omega) } = \lVert
  \widehat{ v } ^{\mathrm{nor}} \rVert _{\mathrm{nor}, \partial \Omega
  } $, so the first equality follows. The second equality is proved
  similarly.
\end{proof}

\begin{remark}
  \label{rmk:ideal-bc}

  As an immediate consequence of the isomorphisms
  \eqref{eqn:trace_quotient}, we have
  \begin{equation*}
    \mathring{ H } \Lambda ^k (\Omega) = \bigl\{ v \in H \Lambda ^k (\Omega) : v ^{\mathrm{tan}} = 0 \bigr\} ,\qquad 
    \mathring{ H } ^\ast \Lambda ^k (\Omega) = \bigl\{ v \in H ^\ast \Lambda ^k (\Omega) : v ^{\mathrm{nor}} = 0 \bigr\} .
  \end{equation*}
  More generally, any closed extension of
  $ \mathrm{d} \colon C _0 ^\infty \Lambda ^k (\Omega) \rightarrow C
  _0 ^\infty \Lambda ^{ k + 1 } (\Omega) $ resulting in a Hilbert
  complex
  $ \mathring{ H } \Lambda ^k (\Omega) \subset V ^k \subset H \Lambda
  ^k (\Omega) $ is called a choice of \emph{ideal boundary
    conditions}, cf.~\citet{BrLe1992}. For example, one may take a
  suitably nice decomposition of $ \partial \Omega $ into two pieces,
  $ \Gamma ^{\mathrm{tan}} $ and $ \Gamma ^{\mathrm{nor}} $, and let
  $ V ^k \coloneqq \bigl\{ v \in H \Lambda ^k (\Omega) : v
  ^{\mathrm{tan}} \rvert _{ \Gamma ^{\mathrm{tan}} } = 0 \bigr\}
  $. For an analysis of these mixed boundary conditions (including
  what qualifies as a ``suitably nice decomposition''), see
  \citet*{JaMiMi2009,GoMiMi2011}.
\end{remark}

\subsection{The Hodge decomposition and Poincar\'e inequality}
\label{sec:hodge-decomp}

Although much of the following analysis applies to more general
Hilbert complexes, we focus our attention on
\begin{equation*}
  0 \rightarrow  H \Lambda ^0 (\Omega) \xrightarrow{ \mathrm{d} }  H \Lambda ^1 (\Omega) \xrightarrow{ \mathrm{d} } \cdots \xrightarrow{ \mathrm{d} } H \Lambda ^n (\Omega) \rightarrow 0 .
\end{equation*}
The operators $ \mathrm{d} $ satisfy a compactness property, as shown
by \citet{Picard1984}, and in particular they are Fredholm and thus
have closed range. Define
\begin{gather*}
  \mathfrak{B}  ^k \coloneqq \bigl\{ \mathrm{d} \tau : \tau \in H \Lambda ^{ k -1 } (\Omega) \bigr\} , \qquad \mathfrak{Z} ^k \coloneqq \bigl\{ v \in H \Lambda ^k (\Omega) : \mathrm{d} v = 0 \bigr\} , \qquad 
  \mathfrak{H} ^k \coloneqq \mathfrak{Z} ^k \cap \mathfrak{B} ^{k \perp },
\end{gather*}
which are the subspaces of \emph{exact}, \emph{closed}, and
\emph{harmonic} $k$-forms in $ L ^2 \Lambda ^k (\Omega) $. It follows
that
\begin{equation*}
  L ^2 \Lambda ^k (\Omega) = \mathfrak{B} ^k \oplus \mathfrak{H} ^k \oplus \mathfrak{Z} ^{ k \perp } ,
\end{equation*}
which is an $ L ^2 $-orthogonal decomposition called the \emph{Hodge
  decomposition}.  By Banach's closed range theorem and the
adjointness of $ \mathrm{d} $ and $ \delta $, we may also write
\begin{equation*}
  \mathfrak{B} ^{ k \perp } = \bigl\{ v \in \mathring{ H } ^\ast \Lambda ^k (\Omega) : \delta v = 0 \bigr\} \eqqcolon \mathring{ \mathfrak{Z}  } _k ^\ast , \qquad \mathfrak{Z} ^{ k \perp } = \bigl\{ \delta \eta : \eta \in \mathring{ H } ^\ast \Lambda ^{ k + 1 } (\Omega) \bigr\} \eqqcolon \mathring{ \mathfrak{B}  } ^\ast _k ,
\end{equation*}
called \emph{coclosed} and \emph{coexact} $k$-forms. This implies
\begin{equation*}
  \mathfrak{H} ^k = \mathfrak{Z}  ^k \cap \mathring{ \mathfrak{Z}  } _k ^\ast = \bigl\{ v \in H \Lambda ^k (\Omega) \cap \mathring{ H } ^\ast \Lambda ^k (\Omega) : \mathrm{d} v = 0 ,\, \delta v = 0 \bigr\} ,
\end{equation*}
which is an equivalent characterization of harmonic forms.

Finally, since $ \mathrm{d} $ is an $ H \Lambda $-bounded isomorphism
between $ H \Lambda ^k (\Omega) \cap \mathfrak{Z} ^{ k \perp } $ and
$ \mathfrak{B} ^{ k + 1 } $, Banach's bounded inverse theorem implies
that there exists a constant $ c _P (\Omega) $ such that
\begin{equation*}
  \lVert v \rVert _\Omega \leq c _P (\Omega) \lVert \mathrm{d} v \rVert _\Omega , \quad \forall v \in H \Lambda ^k (\Omega) \cap \mathfrak{Z} ^{ k \perp },
\end{equation*}
which is called the \emph{Poincar\'e inequality}. Note that
\citet*{ArFaWi2010,Arnold2018} write the Poincar\'e inequality
differently, using the
$ \lVert \cdot \rVert _{ H \Lambda ^k (\Omega) } $ norm, so that the
constant is $ \sqrt{ 1 + c _P (\Omega) ^2 } $. However, the form we
have chosen is more convenient for scaling arguments that we will
apply later.

\subsection{The Hodge--Laplace problem}

Recall the Hodge--Laplace operator
$ L \coloneqq \mathrm{d} \delta + \delta \mathrm{d} $ on $k$-forms,
which we can now interpret in a weak sense. Given
$ f \in L ^2 \Lambda ^k (\Omega) $, we wish to solve the following
problem: Find $ u \in \mathfrak{H} ^{ k \perp } $,
$ p \in \mathfrak{H} ^k $, such that
\begin{align*}
  L u + p = f \quad & \text{in $\Omega$},\\[0.5ex]
  u ^{\mathrm{nor}} = 0,\  ( \mathrm{d} u ) ^{\mathrm{nor}} = 0, \quad &
                                                                         \text{on $ \partial \Omega $}.
\end{align*}
The solution gives the Hodge decomposition
$ f = \mathrm{d} \sigma + p + \delta \rho $, where
$ \sigma = \delta u $ and $ \rho = \mathrm{d} u $.

FEEC is based on the following mixed formulation of the Hodge--Laplace
problem: Find $ \sigma \in H \Lambda ^{ k -1 } (\Omega) $,
$ u \in H \Lambda ^k (\Omega) $, $ p \in \mathfrak{H} ^k $ such that
\begin{subequations}
  \label{eqn:hodge-laplace}
  \begin{alignat}{2}
    ( \sigma, \tau ) _\Omega - ( u , \mathrm{d} \tau ) _\Omega &= 0 ,\quad &\forall \tau &\in H \Lambda ^{ k -1 } (\Omega) , \label{eqn:hodge-laplace_tau} \\
    ( \mathrm{d} \sigma , v ) _\Omega + ( \mathrm{d} u , \mathrm{d} v
    ) _\Omega + ( p , v ) _\Omega &= (f, v ) _\Omega ,\quad &\forall v
    &\in H \Lambda ^k (\Omega) , \label{eqn:hodge-laplace_v}\\
    (u, q ) _\Omega &= 0,\quad &\forall q &\in \mathfrak{H} ^k
    , \label{eqn:hodge-laplace_q}
  \end{alignat}
\end{subequations}
where both boundary conditions are natural. More generally,
nonvanishing natural boundary conditions may be imposed by adding
$ \langle \cdot , \cdot \rangle _{ \partial \Omega } $ terms on the
right-hand side. The well-posedness of this mixed formulation is
proved in \citet*[Theorem~7.2]{ArFaWi2006} and generalized to abstract
Hilbert complexes in \citet*[Theorem~3.2]{ArFaWi2010}.

\begin{remark}
  \label{rmk:nonzero-bc}

  Instead of natural boundary conditions, one may impose essential
  boundary conditions $ \sigma ^{\mathrm{tan}} = 0 $ and
  $ u ^{\mathrm{tan}} = 0 $ by taking the test and trial functions
  from $ \mathring{ H } \Lambda ^{ k -1 } (\Omega) $,
  $ \mathring{ H } \Lambda ^k (\Omega) $,
  $ \mathring{ \mathfrak{H} } ^k $, cf.~\citep[Section
  6.2]{ArFaWi2010}. This may be generalized to nonvanishing
  $ \sigma ^{\mathrm{tan}} $ and $ u ^{\mathrm{tan}} $ via a standard
  extension argument. We may also impose other ideal boundary
  conditions
  $ \mathring{ H } \Lambda (\Omega) \subset V \subset H \Lambda
  (\Omega) $, as discussed in \cref{rmk:ideal-bc}. For example, mixed
  boundary conditions are essential for $ \sigma ^{\mathrm{tan}} $,
  $ u ^{\mathrm{tan}} $ on $ \Gamma ^{\mathrm{tan}} $ and natural for
  $ u ^{\mathrm{nor}} $, $ ( \mathrm{d} u ) ^{\mathrm{nor}} $ on
  $ \Gamma ^{\mathrm{nor}} $.
\end{remark}

\subsection{Finite element exterior calculus}
\label{sec:feec}

Just as the Galerkin method approximates problems on
infinite-dimensional Hilbert spaces by restricting to
finite-dimensional subspaces, FEEC approximates problems on
infinite-dimensional Hilbert \emph{complexes} by restricting to
finite-dimensional \emph{subcomplexes}.

A subcomplex $ V _h \subset H \Lambda (\Omega) $ is a sequence of
(here, finite-dimensional) subspaces
$ V _h ^k \subset H \Lambda ^k (\Omega) $ that is closed with respect
to $ \mathrm{d} $, i.e.,
$ \mathrm{d} V _h ^k \subset V _h ^{ k + 1 } $. Just as in
\cref{sec:hodge-decomp}, we have subspaces
\begin{equation*}
  \mathfrak{B} _h ^k \coloneqq \{ \mathrm{d} \tau _h : \tau _h \in V _h ^{ k -1 } \} , \qquad \mathfrak{Z} _h ^k \coloneqq \{ v _h \in V _h ^k : \mathrm{d} v _h = 0 \} , \qquad \mathfrak{H}  _h ^k \coloneqq \mathfrak{Z}  _h ^k \cap \mathfrak{B} _h ^{ k \perp } ,
\end{equation*}
along with a discrete Hodge decomposition
$ V _h ^k = \mathfrak{B} _h ^k \oplus \mathfrak{H} _h ^k \oplus
\mathfrak{Z} _h ^{ k \perp } $ and discrete Poincar\'e
inequality. Note that the subcomplex assumption implies
$ \mathfrak{B} _h ^k \subset \mathfrak{B} ^k $ and
$ \mathfrak{Z} _h ^k \subset \mathfrak{Z} ^k $, although in general
$ \mathfrak{H} _h ^k \not\subset \mathfrak{H} ^k $ and
$ \mathfrak{Z} _h ^{ k \perp } \not\subset \mathfrak{Z} ^{ k \perp }
$. An additional key assumption in the analysis (but not
implementation) of FEEC is the existence of \emph{bounded commuting
  projections}
$ \pi _h ^k \colon H \Lambda ^k (\Omega) \rightarrow V _h ^k $, which
among other uses gives control of the discrete Poincar\'e constant in
terms of $ c _P (\Omega) $.

In FEEC, one then approximates the Hodge--Laplace problem
\eqref{eqn:hodge-laplace} by the following finite-dimensional
variational problem: Find $ \sigma _h \in V _h ^{ k -1 } $,
$ u _h \in V _h ^k $, $ p _h \in \mathfrak{H} _h ^k $ such that
\begin{subequations}
  \label{eqn:feec}
  \begin{alignat}{2}
    ( \sigma _h , \tau _h ) _\Omega - ( u _h , \mathrm{d} \tau _h ) _\Omega &= 0 ,\quad &\forall \tau _h &\in V _h ^{ k -1 } , \label{eqn:feec_tau} \\
    ( \mathrm{d} \sigma _h , v _h ) _\Omega + ( \mathrm{d} u _h ,
    \mathrm{d} v _h ) _\Omega + ( p _h , v _h ) _\Omega &= (f, v _h )
    _\Omega ,\quad &\forall v _h
    &\in V _h ^k , \label{eqn:feec_v}\\
    (u _h , q _h ) _\Omega &= 0,\quad &\forall q _h &\in \mathfrak{H}
    _h ^k . \label{eqn:feec_q}
  \end{alignat}
\end{subequations}
\citet*{ArFaWi2006,ArFaWi2010} establish stability and convergence for
this problem, proving quasi-optimal error estimates in the
$ H \Lambda $-norm and improved $ L ^2 $-error estimates under
additional regularity assumptions using the aforementioned compactness
property. (In \citep{ArFaWi2010}, much of this analysis takes place in
the setting of abstract Hilbert complexes.) As in
\cref{rmk:nonzero-bc}, we may instead take essential boundary
conditions for $ \sigma _h ^{\mathrm{tan}} $ and
$ u _h ^{\mathrm{tan}} $. \citet{Licht2019} has recently extended the
analysis of FEEC to mixed boundary conditions, including the
construction of bounded commuting projections.

One more essential ingredient of FEEC is the construction of finite
elements for the spaces $ V _h ^k $. Suppose that
$ \Omega \subset \mathbb{R}^n $ is polyhedral, and let
$ \mathcal{T} _h $ be a triangulation of $\Omega$ by $n$-simplices
$ K \in \mathcal{T} _h $. \citet*{ArFaWi2006,ArFaWi2010} construct two
families of piecewise-polynomial differential forms, called
$ \mathcal{P} _r \Lambda $ and $ \mathcal{P} _r ^- \Lambda $, which we
will sometimes refer to collectively as
$ \mathcal{P} _r ^\pm \Lambda $.  \citet*{ArFaWi2006,ArFaWi2010} show that any of the pairs of spaces
\begin{equation}
  \label{eqn:stable_pairs}
  V _h ^{ k -1 } =
  \mathcal{P} _{ r + 1 } ^\pm \Lambda ^{ k -1 } ( \mathcal{T} _h ) , \qquad V _h ^k =
  \begin{Bmatrix}
    \renewcommand\arraystretch{2} \mathcal{P} _r \Lambda ^k (
    \mathcal{T} _h ) \text{ (if
      $ r \geq 1 $)} \\[0.5ex]
    \text{or}\\[0.5ex]
    \mathcal{P} _{ r + 1 } ^- \Lambda ^k ( \mathcal{T} _h )
  \end{Bmatrix},
\end{equation}
results in a subcomplex for the problem \eqref{eqn:feec} satisfying
the needed analytical assumptions.

\section{Domain decomposition of the Hodge--Laplace problem}
\label{sec:dd}

This section presents a domain decomposition of the Hodge--Laplace
problem, where $ \Omega \subset \mathbb{R}^n $ is partitioned into
non-overlapping Lipschitz subdomains $ K \in \mathcal{T} _h $. This
will be the foundation for the hybrid methods in \cref{sec:hybrid},
where $\Omega$ is polyhedral and $ K \in \mathcal{T} _h $ are elements
of a conforming mesh. However, the results of this section also apply
to more general types of domain decomposition.

\subsection{Decomposition of Hilbert complexes of differential forms}

Define the broken spaces
\begin{equation*}
  H \Lambda ^k ( \mathcal{T} _h ) \coloneqq \prod _{ K \in \mathcal{T} _h } H \Lambda ^k (K) , \qquad H ^\ast \Lambda ^k ( \mathcal{T} _h ) \coloneqq \prod _{ K \in \mathcal{T} _h } H ^\ast \Lambda ^k (K) .
\end{equation*}
As product spaces, these naturally inherit the inner products
\begin{equation*}
  ( \cdot , \cdot ) _{ \mathcal{T} _h } \coloneqq \sum _{ K \in \mathcal{T} _h } ( \cdot , \cdot  ) _K , \quad ( \cdot , \cdot  ) _{ H \Lambda ^k ( \mathcal{T} _h ) } \coloneqq \sum _{ K \in \mathcal{T} _h } ( \cdot , \cdot ) _{ H \Lambda ^k (K) } , \quad ( \cdot , \cdot  ) _{ H ^\ast \Lambda ^k ( \mathcal{T} _h ) } \coloneqq \sum _{ K \in \mathcal{T} _h } ( \cdot , \cdot ) _{ H ^\ast \Lambda ^k (K) }.
\end{equation*} 
We can then define
$ \mathrm{d} \colon H \Lambda ^k ( \mathcal{T} _h ) \rightarrow H
\Lambda ^{ k + 1 } ( \mathcal{T} _h ) $ to be
$ \mathrm{d} \rvert _{ H \Lambda ^k (K) } $ on each
$ K \in \mathcal{T} _h $, and likewise for
$ \delta \colon H ^\ast \Lambda ^k ( \mathcal{T} _h ) \rightarrow H
^\ast \Lambda ^{ k -1 } ( \mathcal{T} _h ) $. These broken Hilbert
complexes are simply the $ H \Lambda $ and $ H ^\ast \Lambda $
complexes for the disjoint union
$ \bigsqcup _{ K \in \mathcal{T} _h } K $.

For these broken spaces, we can define tangential and normal traces on
$ \partial \mathcal{T} _h \coloneqq \bigsqcup _{ K \in \mathcal{T} _h
} \partial K $ by taking the trace on $ \partial K $ for each
$ K \in \mathcal{T} _h $. Defining the pairing
$ \langle \cdot , \cdot \rangle _{ \partial \mathcal{T} _h } \coloneqq
\sum _{ K \in \mathcal{T} _h } \langle \cdot , \cdot \rangle _{
  \partial K } $, we immediately get the integration by parts formula
\begin{equation*}
  \langle \tau ^{\mathrm{tan}} , v ^{\mathrm{nor}} \rangle _{ \partial \mathcal{T} _h } = ( \mathrm{d} \tau , v ) _{ \mathcal{T} _h } - ( \tau , \delta v ) _{ \mathcal{T} _h } , \quad \forall \tau \in H \Lambda ^{ k -1 } (\mathcal{T} _h) ,\ v \in H ^\ast \Lambda ^k (\mathcal{T} _h ),
\end{equation*}
simply by summing the integration by parts formulas for each
$ K \in \mathcal{T} _h $.  Note that, if
$ e = \partial K ^+ \cap \partial K^- $ is the interface between
$ K ^\pm \in \mathcal{T} _h $, then $e$ appears twice in the disjoint
union $ \partial \mathcal{T} _h $: once as part of $ \partial K^+ $,
and a second time as part of $ \partial K ^- $. The traces of broken
differential forms can therefore be seen as ``double valued,'' since
there is no continuity imposed at interfaces between subdomains.

There are natural inclusions
$ H \Lambda ^k (\Omega) \hookrightarrow H \Lambda ^k ( \mathcal{T} _h
) $ and
$ H ^\ast \Lambda ^k (\Omega) \hookrightarrow H ^\ast \Lambda ^k (
\mathcal{T} _h ) $, which are defined by restriction to each
$ K \in \mathcal{T} _h $. The next result characterizes these
subspaces of unbroken differential forms, generalizing some classic
results on domain decomposition of $ H ^1 $,
$ H ( \operatorname{curl} ) $, and $ H ( \operatorname{div} ) $ spaces
(cf.~Propositions 2.1.1--2.1.3 of \citet*{BoBrFo2013}). In a weak
sense, it says that unbroken differential forms are precisely those
with ``single valued'' tangential or normal traces.

\begin{proposition}
  \label{prop:dd_spaces}
  If $ \mathcal{T} _h $ is a decomposition of $ \Omega $ into
  Lipschitz subdomains, then
  \begin{align*}
    H \Lambda ^k (\Omega)
    &= \bigl\{ v \in H \Lambda ^k ( \mathcal{T} _h ) : \langle v ^{\mathrm{tan}} , \eta ^{\mathrm{nor}} \rangle _{ \partial \mathcal{T} _h } = 0 , \ \forall \eta \in \mathring{ H } ^\ast \Lambda ^{ k + 1 } (\Omega) \bigr\}, \\
    \mathring{ H } \Lambda ^k (\Omega)
    &= \bigl\{ v \in H \Lambda ^k ( \mathcal{T} _h ) : \langle v ^{\mathrm{tan}} , \eta ^{\mathrm{nor}} \rangle _{ \partial \mathcal{T} _h } = 0 , \ \forall \eta \in H ^\ast \Lambda ^{ k + 1 } (\Omega) \bigr\}, \\
    H ^\ast \Lambda ^k (\Omega)
    &= \bigl\{ v \in H ^\ast \Lambda ^k ( \mathcal{T} _h ) : \langle \tau ^{\mathrm{tan}} , v ^{\mathrm{nor}} \rangle _{ \partial \mathcal{T} _h } = 0 , \ \forall \tau  \in \mathring{ H } \Lambda ^{ k - 1 } (\Omega) \bigr\} ,\\
    \mathring{ H } ^\ast \Lambda ^k (\Omega)
    &= \bigl\{ v \in H ^\ast \Lambda ^k ( \mathcal{T} _h ) : \langle \tau ^{\mathrm{tan}} , v ^{\mathrm{nor}} \rangle _{ \partial \mathcal{T} _h } = 0 , \ \forall \tau  \in H \Lambda ^{ k - 1 } (\Omega) \bigr\}.    
  \end{align*} 
\end{proposition}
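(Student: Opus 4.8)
The plan is to deduce all four identities from the weak and adjoint characterizations of $H\Lambda^k(\Omega)$, $\mathring{H}\Lambda^k(\Omega)$, $H^\ast\Lambda^k(\Omega)$, and $\mathring{H}^\ast\Lambda^k(\Omega)$ recalled in \cref{sec:complexes}, using the broken integration-by-parts formula on $\partial\mathcal{T}_h$ as the bridge between the decomposed and undecomposed settings. I would prove the first two identities carefully and then obtain the last two in the same way, with the roles of $\mathrm{d}$ and $\delta$ (and of tangential and normal traces) interchanged; equivalently, one may apply the Hodge star together with $H^\ast\Lambda^k(\mathcal{T}_h) = \star\,H\Lambda^{n-k}(\mathcal{T}_h)$ and the relation $(\star\alpha)^{\mathrm{tan}} = \widehat{\star}\,\alpha^{\mathrm{nor}}$ on each $K$.

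For the first identity, let $v \in H\Lambda^k(\mathcal{T}_h)$ and write $\mathrm{d}v$ for its elementwise exterior derivative, which lies in $L^2\Lambda^{k+1}(\Omega)$ and agrees with the global weak exterior derivative whenever $v$ extends to $H\Lambda^k(\Omega)$. Summing \eqref{eqn:ibp} over the elements $K\in\mathcal{T}_h$ gives
\begin{equation*}
  \langle v^{\mathrm{tan}}, \eta^{\mathrm{nor}} \rangle_{\partial\mathcal{T}_h} = ( \mathrm{d}v, \eta )_{\mathcal{T}_h} - ( v, \delta\eta )_{\mathcal{T}_h} = ( \mathrm{d}v, \eta )_\Omega - ( v, \delta\eta )_\Omega , \qquad \eta \in H^\ast\Lambda^{k+1}(\mathcal{T}_h) .
\end{equation*}
Thus $v$ lies in the right-hand set of the first identity precisely when $(\mathrm{d}v, \eta)_\Omega = (v,\delta\eta)_\Omega$ for all $\eta\in\mathring{H}^\ast\Lambda^{k+1}(\Omega)$; since $\mathring{H}^\ast\Lambda^{k+1}(\Omega)$ is the graph-norm closure of $C_0^\infty\Lambda^{k+1}(\Omega)$ and both sides are continuous in $\eta$ in the $H^\ast\Lambda^{k+1}$ graph norm, this is equivalent to the same identity holding against all $\eta\in C_0^\infty\Lambda^{k+1}(\Omega)$, i.e.\ to $v\in H\Lambda^k(\Omega)$ with $\mathrm{d}v$ as its weak exterior derivative. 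The second identity follows the same way, using instead that $v\in\mathring{H}\Lambda^k(\Omega)$ if and only if $(\mathrm{d}v,\eta)_\Omega = (v,\delta\eta)_\Omega$ holds against the \emph{full} space $\eta\in H^\ast\Lambda^{k+1}(\Omega)$ --- that is, $\mathrm{d}$ on $\mathring{H}\Lambda^k(\Omega)$ is the adjoint of $\delta$ on $H^\ast\Lambda^{k+1}(\Omega)$, reflecting that $\mathring{H}\Lambda^k(\Omega)$ is the graph-norm closure of $C_0^\infty\Lambda^k(\Omega)$. The third and fourth identities are the mirror-image statements for the codifferential.

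The step that carries the weight is identifying which test space goes with which space: $C_0^\infty$ (equivalently $\mathring{H}^\ast$) for the maximal-domain space $H\Lambda^k(\Omega)$, and the full maximal-domain space $H^\ast\Lambda^{k+1}(\Omega)$ for the minimal-domain space $\mathring{H}\Lambda^k(\Omega)$. This is exactly the duality between the minimal and maximal closed extensions of $\mathrm{d}\rvert_{C_0^\infty}$ (von Neumann's theory): $\mathrm{d}$ on $\mathring{H}\Lambda^k(\Omega)$ is the adjoint of $\delta$ on $H^\ast\Lambda^{k+1}(\Omega)$, while $\mathrm{d}$ on $H\Lambda^k(\Omega)$ is the adjoint of $\delta$ on $\mathring{H}^\ast\Lambda^{k+1}(\Omega)$, and these two facts are precisely what make the first and second identities differ only in their test spaces; everything else is summation of \eqref{eqn:ibp} over elements plus the density argument above. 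Two points deserve a word in the write-up: the pairings $\langle\cdot,\cdot\rangle_{\partial\mathcal{T}_h}$ make sense because Weck's trace theory applies on each Lipschitz element $K$; and no face-by-face orientation bookkeeping enters the proof, so the ``single-valued trace'' reading of the identities is a consequence of them rather than an ingredient.
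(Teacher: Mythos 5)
Your proof is correct and follows essentially the same route as the paper's: sum the elementwise integration-by-parts formula to convert the vanishing of the boundary pairing into the adjoint identity $( \mathrm{d} _h v , \eta ) _\Omega = ( v, \delta \eta ) _\Omega$ over the relevant test space, then identify that test space with the appropriate (minimal or maximal) domain. The paper writes out only the first identity and leaves the rest to the reader, so your explicit remark that the $ \mathring{ H } \Lambda ^k (\Omega) $ case rests on the duality $ \mathrm{d} _{\min} = ( \delta _{\max} ) ^\ast $ rather than on a density argument is a helpful clarification, but not a different proof.
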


\begin{proof}
  These four identities are proved using essentially the same
  argument, so we give only a proof of the first. If
  $ v \in H \Lambda ^k (\Omega) $, then for all
  $ \eta \in \mathring{ H } ^\ast \Lambda ^{ k + 1 } (\Omega) $,
  \begin{equation*} 
    \langle v ^{\mathrm{tan}} , \eta ^{\mathrm{nor}} \rangle _{ \partial \mathcal{T} _h } = ( \mathrm{d} v , \eta ) _{ \mathcal{T} _h } - ( v , \delta \eta ) _{ \mathcal{T} _h } = ( \mathrm{d} v, \eta ) _\Omega - ( v, \delta \eta ) _\Omega = \langle v ^{\mathrm{tan}} , \eta ^{\mathrm{nor}} \rangle _{ \partial \Omega } = 0 .
  \end{equation*}
  Conversely, suppose that
  $ v \in H \Lambda ^k ( \mathcal{T} _h ) \subset L ^2 \Lambda ^k (
  \mathcal{T} _h ) \cong L ^2 \Lambda ^k (\Omega) $ satisfies
  $ \langle v ^{\mathrm{tan}} , \eta ^{\mathrm{nor}} \rangle _{
    \partial \mathcal{T} _h } = 0 $ for all
  $ \eta \in \mathring{ H } ^\ast \Lambda ^{ k + 1 } (\Omega) $. Then,
  using integration by parts and Cauchy--Schwarz,
  \begin{equation*}
    ( v , \delta \eta ) _\Omega = ( v , \delta \eta ) _{ \mathcal{T} _h } = ( \mathrm{d} v, \eta ) _{ \mathcal{T} _h } \leq \lVert \mathrm{d} v \rVert _{ \mathcal{T} _h } \lVert \eta \rVert _{ \mathcal{T} _h } = \lVert \mathrm{d} v \rVert _{ \mathcal{T} _h } \lVert \eta \rVert _\Omega .
  \end{equation*}
  In particular, this holds for
  $ \eta \in C _0 ^\infty \Lambda ^{ k + 1 } (\Omega) $, implying
  $ \mathrm{d} v \in L ^2 \Lambda ^{ k + 1 } (\Omega) $ and hence
  $ v \in H \Lambda ^k (\Omega) $.
\end{proof}

\subsection{Decomposition of the Hodge--Laplace problem}
\label{sec:dd_hodge-laplace}

For each $ K \in \mathcal{T} _h $, observe that $\sigma$ and $u$ solve
the local problem
\begin{alignat*}{2}
  ( \sigma, \tau ) _K - ( u , \mathrm{d} \tau ) _K &= 0, \quad & \forall \tau &\in \mathring{ H } \Lambda ^{ k -1 } (K) ,\\
  (\mathrm{d} \sigma, v ) _K + ( \mathrm{d} u , \mathrm{d} v ) _K &= (
  f - p , v ) _K ,\quad &\forall v &\in \mathring{ H } \Lambda ^k (K)
  ,
\end{alignat*}
with essential boundary conditions $ \sigma ^{\mathrm{tan}} $ and
$ u ^{\mathrm{tan}} $. However, if the space of \emph{local harmonic
  forms} $ \mathring{ \mathfrak{H} } ^k (K) $ is nontrivial, then this
local problem is not well-posed.\footnote{When $K \in \mathcal{T} _h $
  are contractible (e.g., simplices in a triangulation), this is only
  an issue for $ k = n $, where
  $ \mathring{ \mathfrak{H} } ^n (K) \cong \mathbb{R} $.} Therefore,
we include an additional local variable
$ \overline{ p } \in \mathring{ \mathfrak{H} } ^k (K) $ and solve
\begin{subequations}
  \label{eqn:local_solver}
\begin{alignat}{2}
  ( \sigma, \tau ) _K - ( u , \mathrm{d} \tau ) _K &= 0, \quad & \forall \tau &\in \mathring{ H } \Lambda ^{ k -1 } (K) ,\\
  (\mathrm{d} \sigma, v ) _K + ( \mathrm{d} u , \mathrm{d} v ) _K + ( \overline{p} , v ) _K &= ( f - p , v ) _K ,\quad &\forall v &\in \mathring{ H } \Lambda ^k (K) ,\\
  ( u , \overline{ q } ) _K &= ( \overline{ u } , \overline{ q } ) _K
  , \quad & \forall \overline{q} &\in \mathring{ \mathfrak{H} } ^k (K)
  ,
\end{alignat}
\end{subequations}
where $ \overline{u} $ is the projection of $u$ onto
$ \mathring{ \mathfrak{H} } ^k (K) $. Following \cref{rmk:nonzero-bc},
these local solvers are well-posed for any right-hand side and
tangential traces $ \sigma ^{\mathrm{tan}} $, $ u ^{\mathrm{tan}} $.

We now allow the tangential traces
$ \widehat{ \sigma } ^{\mathrm{tan}} $,
$ \widehat{ u } ^{\mathrm{tan}} $ to be independent variables and
impose the constraints
$ \sigma ^{\mathrm{tan}} = \widehat{ \sigma } ^{\mathrm{tan}} $,
$ u ^{\mathrm{tan}} = \widehat{ u } ^{\mathrm{tan}} $ using Lagrange
multipliers $ \widehat{ u } ^{\mathrm{nor}} $,
$ \widehat{ \rho } ^{\mathrm{nor}} $, which will turn out to be the
normal traces of $u$ and $ \rho = \mathrm{d} u $. Define the spaces
\begin{align*}
  W ^k &\coloneqq H \Lambda ^k ( \mathcal{T} _h ) , & \overline{ \mathfrak{H} } ^k  &\coloneqq  \prod _{ K \in \mathcal{T} _h } \mathring{ \mathfrak{H}  } ^k (K), \\
  \widehat{ W } ^{k, \mathrm{nor}} &\coloneqq \bigl\{ \eta ^{\mathrm{nor}} : \eta \in H ^\ast \Lambda ^{ k + 1 } ( \mathcal{T} _h ) \bigr\} , & \widehat{ V } ^{k, \mathrm{tan}} &\coloneqq \bigl\{ v ^{\mathrm{tan}} : v \in H \Lambda ^k (\Omega) \bigr\} .
\end{align*}
Note that $ \widehat{ V } ^{k, \mathrm{tan}} $ consists of ``single
valued'' traces from the unbroken space
$ H \Lambda ^k (\Omega) $, whereas the other three spaces
contain broken $k$-forms. Consider the variational problem: Find
\begin{align*} 
  \tag{local variables} \sigma &\in W ^{ k -1 } , & u &\in W ^k , & \overline{p} &\in \overline{ \mathfrak{H}  } ^k , & \widehat{ u } ^{\mathrm{nor}} &\in \widehat{ W } ^{k - 1, \mathrm{nor}} , & \widehat{ \rho } ^{\mathrm{nor}} &\in \widehat{ W } ^{k, \mathrm{nor}} , \\
  \tag{global variables} && p &\in \mathfrak{H} ^k ,& \overline{ u } &\in \overline{ \mathfrak{H}  } ^k , & \widehat{ \sigma } ^{\mathrm{tan}} &\in \widehat{ V } ^{k-1, \mathrm{tan}} , & \widehat{ u } ^{\mathrm{tan}} &\in \widehat{ V } ^{k, \mathrm{tan}} ,
\end{align*} 
satisfying
\begin{subequations}
  \label{eqn:dd}
  \begin{alignat}{2}
    ( \sigma, \tau ) _{ \mathcal{T} _h } - ( u , \mathrm{d} \tau ) _{ \mathcal{T} _h } + \langle \widehat{ u } ^{\mathrm{nor}} , \tau ^{\mathrm{tan}} \rangle _{ \partial \mathcal{T} _h } &= 0, \quad &\forall \tau &\in W ^{ k -1 } , \label{eqn:dd_tau}\\
    (\mathrm{d} \sigma, v ) _{ \mathcal{T} _h } + ( \mathrm{d} u , \mathrm{d} v ) _{ \mathcal{T} _h } + ( \overline{ p } + p, v ) _{ \mathcal{T} _h } - \langle \widehat{ \rho } ^{\mathrm{nor}} , v ^{\mathrm{tan}} \rangle _{ \partial \mathcal{T} _h } &= ( f, v ) _{ \mathcal{T} _h } , \quad &\forall v &\in W ^k , \label{eqn:dd_v}\\
   ( \overline{ u } - u , \overline{ q } ) _{ \mathcal{T} _h } &= 0 ,\quad &\forall \overline{ q } &\in \overline{ \mathfrak{H}  } ^k , \label{eqn:dd_qbar}\\
  \langle \widehat{ \sigma } ^{\mathrm{tan}} - \sigma ^{\mathrm{tan}} , \widehat{ v } ^{\mathrm{nor}} \rangle _{ \partial \mathcal{T} _h } &= 0, \quad &\forall \widehat{ v } ^{\mathrm{nor}} &\in \widehat{ W } ^{k-1, \mathrm{nor}}, \label{eqn:dd_vnor}\\
  \langle \widehat{ u } ^{\mathrm{tan}} - u ^{\mathrm{tan}} , \widehat{ \eta } ^{\mathrm{nor}} \rangle _{ \partial \mathcal{T} _h } &= 0, \quad &\forall \widehat{ \eta } ^{\mathrm{nor}} &\in \widehat{ W } ^{k, \mathrm{nor}} , \label{eqn:dd_etanor}\\
  (u, q ) _{ \mathcal{T} _h } &= 0, \quad &\forall q &\in \mathfrak{H} ^k \label{eqn:dd_q},\\
   ( \overline{ p } , \overline{ v } ) _{ \mathcal{T} _h } &= 0 ,\quad &\forall \overline{ v } &\in \overline{ \mathfrak{H}  } ^k \label{eqn:dd_vbar} ,\\
  \langle \widehat{ u } ^{\mathrm{nor}} , \widehat{ \tau } ^{\mathrm{tan}} \rangle _{ \partial \mathcal{T} _h } &= 0 , \quad&\forall \widehat{ \tau } ^{\mathrm{tan}} &\in \widehat{ V } ^{k-1, \mathrm{tan}} , \label{eqn:dd_tautan}\\
  \langle \widehat{ \rho } ^{\mathrm{nor}} , \widehat{ v } ^{\mathrm{tan}} \rangle _{ \partial \mathcal{T} _h } &= 0 , \quad&\forall \widehat{ v } ^{\mathrm{tan}} &\in \widehat{ V } ^{k, \mathrm{tan}} . \label{eqn:dd_vtan}
\end{alignat}
\end{subequations}
Given values for the global variables, notice that
\eqref{eqn:dd_tau}--\eqref{eqn:dd_etanor} simply amounts to solving
the local problem \eqref{eqn:local_solver} on each
$ K \in \mathcal{T} _h $.

We now prove that this is indeed a domain decomposition of the
Hodge--Laplace problem \eqref{eqn:hodge-laplace}, which in particular
implies well-posedness of \eqref{eqn:dd}. A more general proof of
well-posedness, where the right-hand side of \eqref{eqn:dd} is allowed
to be arbitrary, will be given in \cref{sec:saddle}.

\begin{theorem}
  \label{thm:dd}
  The following are equivalent:
  \begin{itemize}
  \item
    $ ( \sigma, u, \overline{ p } , \widehat{ u } ^{\mathrm{nor}} ,
    \widehat{ \rho } ^{\mathrm{nor}} , p, \overline{ u } , \widehat{
      \sigma } ^{\mathrm{tan}} , \widehat{ u } ^{\mathrm{tan}} ) $ is
    a solution to \eqref{eqn:dd}.
  \item $ ( \sigma, u, p ) $ is a solution to
    \eqref{eqn:hodge-laplace}, and furthermore,
    $ \overline{ p } = 0 $,
    $ \widehat{ u } ^{\mathrm{nor}} = u ^{\mathrm{nor}}$,
    $ \widehat{ \rho } ^{\mathrm{nor}} = (\mathrm{d} u)
    ^{\mathrm{nor}} $, $ \overline{ u } $ is the projection of $u$
    onto $ \overline{ \mathfrak{H} } ^k $,
    $ \widehat{ \sigma } ^{\mathrm{tan}} = \sigma ^{\mathrm{tan}} $,
    and $ \widehat{ u } ^{\mathrm{tan}} = u ^{\mathrm{tan}} $.
  \end{itemize} 
\end{theorem}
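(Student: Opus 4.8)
The plan is to prove the two implications in turn, treating \eqref{eqn:dd} as the ``broken'' form of \eqref{eqn:hodge-laplace} with the interface traces reinstated as Lagrange multipliers. The only tools needed are the elementwise integration by parts identity $\langle \tau^{\mathrm{tan}}, v^{\mathrm{nor}} \rangle_{\partial\mathcal{T}_h} = (\mathrm{d}\tau, v)_{\mathcal{T}_h} - (\tau, \delta v)_{\mathcal{T}_h}$ from \cref{sec:dd}, the characterizations of unbroken forms in \cref{prop:dd_spaces}, and the fact that on each Lipschitz piece $\partial K$ the pairing $\langle \cdot, \cdot \rangle_{\partial K}$ between tangential and normal traces is a non-degenerate duality pairing (Weck's trace theory), so a trace annihilating the whole opposite trace space must vanish.

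For the implication \eqref{eqn:dd} $\Rightarrow$ \eqref{eqn:hodge-laplace}, I would first read off the ``algebraic'' consequences: testing \eqref{eqn:dd_vbar} with $\overline{v} = \overline{p} \in \overline{\mathfrak{H}}^k$ forces $\overline{p} = 0$, and \eqref{eqn:dd_qbar} says precisely that $\overline{u}$ is the $L^2$-projection of $u$ onto $\overline{\mathfrak{H}}^k$. Next, for any $\eta \in \mathring{H}^\ast\Lambda^k(\Omega)$ one has $\eta^{\mathrm{nor}} \in \widehat{W}^{k-1,\mathrm{nor}}$, so \eqref{eqn:dd_vnor} together with integration by parts over $\mathcal{T}_h$ gives $\langle \sigma^{\mathrm{tan}}, \eta^{\mathrm{nor}} \rangle_{\partial\mathcal{T}_h} = \langle \widehat{\sigma}^{\mathrm{tan}}, \eta^{\mathrm{nor}} \rangle_{\partial\mathcal{T}_h} = \langle \widehat{\sigma}^{\mathrm{tan}}, \eta^{\mathrm{nor}} \rangle_{\partial\Omega} = 0$, whence $\sigma \in H\Lambda^{k-1}(\Omega)$ by \cref{prop:dd_spaces}; likewise \eqref{eqn:dd_etanor} gives $u \in H\Lambda^k(\Omega)$. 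Localizing \eqref{eqn:dd_vnor} and \eqref{eqn:dd_etanor} to a single $\partial K$ and using non-degeneracy upgrades these to $\widehat{\sigma}^{\mathrm{tan}} = \sigma^{\mathrm{tan}}$ and $\widehat{u}^{\mathrm{tan}} = u^{\mathrm{tan}}$. Restricting the test function in \eqref{eqn:dd_tau} to $\tau \in H\Lambda^{k-1}(\Omega)$ makes $\tau^{\mathrm{tan}}$ single-valued, so the multiplier term vanishes by \eqref{eqn:dd_tautan} and \eqref{eqn:dd_tau} collapses to \eqref{eqn:hodge-laplace_tau}; similarly \eqref{eqn:dd_v} restricted to $v \in H\Lambda^k(\Omega)$, using $\overline{p} = 0$ and \eqref{eqn:dd_vtan}, gives \eqref{eqn:hodge-laplace_v}; and \eqref{eqn:dd_q} is \eqref{eqn:hodge-laplace_q}, so $(\sigma, u, p)$ solves \eqref{eqn:hodge-laplace}. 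Finally, to identify the last two multipliers I would note that \eqref{eqn:hodge-laplace_tau} and \eqref{eqn:hodge-laplace_v}, tested against compactly supported forms, give $\delta u = \sigma$ and $\delta\,\mathrm{d}u = f - \mathrm{d}\sigma - p$ on $\Omega$, which localize (since $\delta$ is a local operator) to $u|_K \in H^\ast\Lambda^k(K)$ and $\mathrm{d}u|_K \in H^\ast\Lambda^{k+1}(K)$; integrating by parts elementwise in \eqref{eqn:dd_tau} and \eqref{eqn:dd_v} then reduces them to $\langle \widehat{u}^{\mathrm{nor}} - u^{\mathrm{nor}}, \tau^{\mathrm{tan}} \rangle_{\partial K} = 0$ and $\langle \widehat{\rho}^{\mathrm{nor}} - (\mathrm{d}u)^{\mathrm{nor}}, v^{\mathrm{tan}} \rangle_{\partial K} = 0$ for all local test forms, whence $\widehat{u}^{\mathrm{nor}} = u^{\mathrm{nor}}$ and $\widehat{\rho}^{\mathrm{nor}} = (\mathrm{d}u)^{\mathrm{nor}}$.

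For the converse, I would take a solution $(\sigma, u, p)$ of \eqref{eqn:hodge-laplace}, assign the auxiliary variables the values prescribed in the statement, and check \eqref{eqn:dd} equation by equation. The constraints \eqref{eqn:dd_vnor}, \eqref{eqn:dd_etanor}, \eqref{eqn:dd_vbar} and the side condition \eqref{eqn:dd_qbar} hold by construction, and \eqref{eqn:dd_q} is \eqref{eqn:hodge-laplace_q}. For \eqref{eqn:dd_tau} and \eqref{eqn:dd_v} I would run the elementwise integration by parts of the previous paragraph in reverse, turning the volume terms into boundary terms $\langle \tau^{\mathrm{tan}}, u^{\mathrm{nor}} \rangle$ and $\langle v^{\mathrm{tan}}, (\mathrm{d}u)^{\mathrm{nor}} \rangle$ that match the multiplier terms after summing over $\mathcal{T}_h$. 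For \eqref{eqn:dd_tautan} and \eqref{eqn:dd_vtan} I would write the arbitrary element of $\widehat{V}^{k-1,\mathrm{tan}}$ as $\chi^{\mathrm{tan}}$ with $\chi \in H\Lambda^{k-1}(\Omega)$ (resp.\ of $\widehat{V}^{k,\mathrm{tan}}$ as $w^{\mathrm{tan}}$ with $w \in H\Lambda^k(\Omega)$), apply integration by parts over $\mathcal{T}_h$, and recognize the result as $(\mathrm{d}\chi, u)_\Omega - (\chi, \delta u)_\Omega = 0$ from \eqref{eqn:hodge-laplace_tau} (resp.\ as \eqref{eqn:hodge-laplace_v} with $v = w$).

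The step I expect to be most delicate is the passage from a constraint saying ``this trace pairs to zero against the entire opposite trace space on $\partial\mathcal{T}_h$'' to the pointwise identifications $\widehat{\sigma}^{\mathrm{tan}} = \sigma^{\mathrm{tan}}$, $\widehat{u}^{\mathrm{tan}} = u^{\mathrm{tan}}$, $\widehat{u}^{\mathrm{nor}} = u^{\mathrm{nor}}$, $\widehat{\rho}^{\mathrm{nor}} = (\mathrm{d}u)^{\mathrm{nor}}$: this requires localizing to a single $\partial K$ and invoking non-degeneracy of $\langle \cdot, \cdot \rangle_{\partial K}$, and it uses in an essential way that the elementwise regularity $u|_K \in H^\ast\Lambda^k(K)$, $\mathrm{d}u|_K \in H^\ast\Lambda^{k+1}(K)$ is genuinely inherited from the global mixed formulation, which holds because the weak codifferential is a local condition. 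Everything else reduces to bookkeeping with the integration by parts formula and the definitions of the broken and hybrid spaces.
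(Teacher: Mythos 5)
Your argument is correct and follows essentially the same route as the paper: identify the auxiliary variables using the non-degeneracy of the tangential/normal trace duality, invoke \cref{prop:dd_spaces} to conclude that $\sigma$ and $u$ are unbroken, restrict the test functions to the unbroken spaces so that the multiplier terms drop out by \eqref{eqn:dd_tautan}--\eqref{eqn:dd_vtan}, and reverse the process for the converse. The only difference is one of emphasis: where the paper obtains $\widehat{u}^{\mathrm{nor}} = u^{\mathrm{nor}}$, $\widehat{\rho}^{\mathrm{nor}} = (\mathrm{d}u)^{\mathrm{nor}}$ and the constraints \eqref{eqn:dd_tautan}--\eqref{eqn:dd_vtan} by directly combining the broken and unbroken variational equations, you route the same computation through elementwise integration by parts and the local regularity $u\rvert_K \in H^\ast\Lambda^k(K)$, $\mathrm{d}u\rvert_K \in H^\ast\Lambda^{k+1}(K)$ --- precisely the alternative the paper acknowledges (and deliberately sets aside) in the remark following its proof, because the purely variational version carries over more readily to the discrete setting of \cref{thm:feec-h}.
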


\begin{proof}
  Suppose we have a solution to \eqref{eqn:dd}. The claimed equalities
  are immediate from the variational problem, so it remains only to
  show that $ ( \sigma, u , p ) $ solves
  \eqref{eqn:hodge-laplace}. Since
  $ \sigma ^{\mathrm{tan}} = \widehat{ \sigma } ^{\mathrm{tan}} $ and
  $ u ^{\mathrm{tan}} = \widehat{ u } ^{\mathrm{tan}} $,
  \cref{prop:dd_spaces} implies that
  $ \sigma \in H \Lambda ^{ k -1 } (\Omega) $ and
  $ u \in H \Lambda ^k (\Omega) $. Therefore, taking test functions
  $ \tau \in H \Lambda ^{ k -1 } (\Omega) $ and
  $ v \in H \Lambda ^k (\Omega) $ in
  \eqref{eqn:dd_tau}--\eqref{eqn:dd_v}, the normal trace terms vanish
  by \eqref{eqn:dd_tautan}--\eqref{eqn:dd_vtan}, and we obtain
  \eqref{eqn:hodge-laplace_tau}--\eqref{eqn:hodge-laplace_v}. Finally,
  \eqref{eqn:dd_q} is the same as \eqref{eqn:hodge-laplace_q}, which
  proves the forward direction.

  Conversely, given a solution $ ( \sigma, u, p ) $ to
  \eqref{eqn:hodge-laplace}, it is immediate that
  \eqref{eqn:dd_tau}--\eqref{eqn:dd_vbar} hold. For the remaining two
  equations, first observe that combining
  \eqref{eqn:hodge-laplace_tau} and \eqref{eqn:dd_tau} gives
  $ \langle \widehat{ u } ^{\mathrm{nor}} , \tau ^{\mathrm{tan}}
  \rangle _{ \partial \mathcal{T} _h } = 0 $ for
  $ \tau \in H \Lambda ^{ k -1 } (\Omega) $, which implies
  \eqref{eqn:dd_tautan}. Similarly, combining
  \eqref{eqn:hodge-laplace_v} and \eqref{eqn:dd_v} gives
  $ \langle \widehat{ \rho } ^{\mathrm{nor}} , v ^{\mathrm{tan}}
  \rangle _{ \partial \mathcal{T} _h } = 0 $ for
  $ v \in H \Lambda ^k (\Omega) $, which implies \eqref{eqn:dd_vtan}.
\end{proof}

For the last step of the proof, we could instead have used that
\eqref{eqn:hodge-laplace_tau} gives
$ u \in \mathring{ H } ^\ast \Lambda ^k (\Omega) $ and
\eqref{eqn:hodge-laplace_v} gives
$ \mathrm{d} u \in \mathring{ H } ^\ast \Lambda ^{ k + 1 } (\Omega) $,
applying \cref{prop:dd_spaces} to conclude that their normal traces
satisfy \eqref{eqn:dd_tautan}--\eqref{eqn:dd_vtan}. However, as we
will see, the variational argument above generalizes more readily to
the hybridization of FEEC in \cref{sec:hybrid}.

\begin{remark}
  \label{rmk:dd_bc}
  Although the domain decomposition is presented above for
  $ H \Lambda (\Omega) $ with natural boundary conditions on
  $ \partial \Omega $, it is easily generalized to
  $ \mathring{ H } \Lambda (\Omega) $ or other ideal boundary
  conditions
  $ \mathring{ H } \Lambda (\Omega) \subset V \subset H \Lambda
  (\Omega) $, as in \cref{rmk:nonzero-bc}. In this case, the broken
  spaces are unchanged, and we take the unbroken tangential traces and
  harmonic forms to be those from the complex $V$.
\end{remark}

We note two special cases that recover known methods for the scalar
Poisson equation:
\begin{itemize}
\item When $ k = 0 $, the only nontrivial fields are $u$,
  $ \widehat{ \rho } ^{\mathrm{nor}} $, $p$, and
  $ \widehat{ u } ^{\mathrm{nor}} $, and the Neumann problem on
  $\Omega$ is decomposed into local Dirichlet problems on
  $ K \in \mathcal{T} _h $. If
  $ V = \mathring{ H } \Lambda (\Omega) $, so that $ \partial \Omega $
  also has Dirichlet conditions, then $p$ is trivial, and we recover
  the ``three-field domain decomposition method'' of
  \citet{BrMa1994}. This decomposition is the foundation for the
  hybridized continuous Galerkin method of \citet*{CoGoWa2007}.

\item When $ k = n $, the mixed formulation of the Dirichlet problem
  on $\Omega$ is decomposed into local Neumann problems on
  $ K \in \mathcal{T} _h $. Assuming the subdomains are connected, the
  local harmonic variables $ \overline{ u } $ and $ \overline{ p } $
  are piecewise constant, and we recover the domain decomposition
  appearing in \citet[Section 5.1]{Cockburn2016}, used for
  hybridization with local Neumann solvers.
\end{itemize} 

\subsection{Saddle point formulation and well-posedness}
\label{sec:saddle}

Define the bilinear forms
\begin{align*}
  a \bigl( ( \sigma , u, \overline{p}, \widehat{ u } ^{\mathrm{nor}} , \widehat{ \rho } ^{\mathrm{nor}} ), ( \tau, v, \overline{q}, \widehat{ v } ^{\mathrm{nor}} , \widehat{ \eta } ^{\mathrm{nor}} ) \bigr) 
  &\coloneqq - ( \sigma, \tau ) _{ \mathcal{T} _h } + ( u , \mathrm{d} \tau ) _{ \mathcal{T} _h } - \langle \widehat{ u } ^{\mathrm{nor}} , \tau ^{\mathrm{tan}} \rangle _{ \partial \mathcal{T} _h } \\
  &\hphantom{{}\coloneqq{}} {+} ( \mathrm{d} \sigma , v ) _{ \mathcal{T} _h } + ( \mathrm{d} u, \mathrm{d} v ) _{ \mathcal{T} _h } + ( \overline{p} , v ) _{ \mathcal{T} _h } - \langle \widehat{ \rho } ^{\mathrm{nor}} , v ^{\mathrm{tan}} \rangle _{ \partial \mathcal{T} _h } \\
  &\hphantom{{}\coloneqq{}} {+} ( u, \overline{q} ) _{ \mathcal{T} _h } - \langle \sigma ^{\mathrm{tan}} , \widehat{ v } ^{\mathrm{nor}} \rangle _{ \partial \mathcal{T} _h } - \langle u ^{\mathrm{tan}} , \widehat{ \eta } ^{\mathrm{nor}} \rangle _{ \partial \mathcal{T} _h } ,\\
  b \bigl( ( \tau , v, \overline{q}, \widehat{ v } ^{\mathrm{nor}} , \widehat{ \eta } ^{\mathrm{nor}} ), (q, \overline{ v } , \widehat{ \tau } ^{\mathrm{tan}} , \widehat{ v } ^{\mathrm{tan}} )\bigr) &\coloneqq ( v, q ) _{ \mathcal{T} _h } - ( \overline{q}, \overline{ v } ) _{ \mathcal{T} _h } + \langle \widehat{ v } ^{\mathrm{nor}} , \widehat{ \tau } ^{\mathrm{tan}} \rangle _{ \partial \mathcal{T} _h } + \langle \widehat{ \eta } ^{\mathrm{nor}} , \widehat{ v } ^{\mathrm{tan}} \rangle _{ \partial \mathcal{T} _h } ,
\end{align*}
where we have chosen the signs so that $a ( \cdot , \cdot ) $ is
symmetric. Then the domain-decomposed Hodge--Laplace problem
\eqref{eqn:dd} becomes a particular instance of the saddle-point
problem
\begin{subequations}
\label{eqn:saddle}
\begin{alignat}{2}
  a ( x, x ^\prime ) + b ( x ^\prime , y ) &= F (x ^\prime), \quad &\forall x ^\prime &\in X ,\label{eqn:saddle_x}\\
  b ( x, y ^\prime ) &= G ( y ^\prime ) , \quad &\forall y ^\prime
  &\in Y \label{eqn:saddle_y}.
\end{alignat}
\end{subequations}
Here, $X$ is the space of local variables and $Y$ is the space of
global variables, so $ a ( \cdot , \cdot ) $ corresponds to the local
solvers and $ b ( \cdot , \cdot ) $ to the coupling between local and
global variables. This saddle point formulation will also be useful
for describing the procedure of static condensation in
\cref{sec:condensation}.

\begin{theorem}
  The problem \eqref{eqn:saddle} is well-posed.
\end{theorem}

\begin{proof}
  It suffices to show that $ b ( \cdot , \cdot ) $ satisfies a single
  inf-sup condition, meaning that the map $ x \mapsto b ( x, \cdot ) $
  is surjective, and that $ a ( \cdot , \cdot ) $ satisfies a double
  inf-sup condition on the kernel of this map, cf.~\citet*[Theorem
  4.2.3]{BoBrFo2013}.

  Let $ q $, $ \overline{ v } $, $ \widehat{ \tau } ^{\mathrm{tan}} $,
  and $ \widehat{ v } ^{\mathrm{tan}} $ be arbitrary. For the first
  two terms appearing in $b ( \cdot , \cdot ) $, we have
  \begin{equation*}
    \lVert q \rVert _{ \mathcal{T} _h } = \sup _{ v \neq 0 } \frac{ ( v, q ) _{ \mathcal{T} _h } }{ \lVert v \rVert _{ \mathcal{T} _h } } , \qquad \lVert \overline{ v }  \rVert _{ \mathcal{T} _h } = \sup_{ \overline{ q } \neq 0 } \frac{ -( \overline{ q } , \overline{ v }  ) _{ \mathcal{T} _h } }{ \hphantom{-}\lVert \overline{ q } \rVert _{ \mathcal{T} _h } } ,
  \end{equation*}
  attained at $ v = q $ and $ \overline{ q } = - \overline{ v } $ when
  these are nonzero. Applying \cref{lem:duality_isometry} to each
  $ K \in \mathcal{T} _h $ gives
  \begin{equation*}
    \lVert \widehat{ \tau } ^{\mathrm{tan}} \rVert _{\mathrm{tan}, \partial \mathcal{T} _h } = \sup _{ \widehat{ v } ^{\mathrm{nor}} \neq 0 } \frac{ \langle \widehat{ \tau } ^{\mathrm{tan}} , \widehat{ v } ^{\mathrm{nor}} \rangle _{ \partial \mathcal{T} _h } }{ \lVert \widehat{ v } ^{\mathrm{nor}} \rVert _{\mathrm{nor}, \partial \mathcal{T} _h } },\qquad 
    \lVert \widehat{ v } ^{\mathrm{tan}} \rVert _{\mathrm{tan}, \partial \mathcal{T} _h } = \sup _{ \widehat{ \eta } ^{\mathrm{nor}} \neq 0 } \frac{ \langle \widehat{ v } ^{\mathrm{tan}}, \widehat{ \eta } ^{\mathrm{nor}} \rangle _{ \partial \mathcal{T} _h } }{ \lVert \widehat{ \eta } ^{\mathrm{nor}} \rVert _{\mathrm{nor}, \partial \mathcal{T} _h } } ,
  \end{equation*}
  which proves the inf-sup condition for $ b ( \cdot , \cdot )$.  It
  remains to show that $a (\cdot , \cdot )$ satisfies an inf-sup
  condition on the kernel of $ x \mapsto b ( x, \cdot ) $. On this
  kernel, we have
  \begin{equation*}
    u, v \perp \mathfrak{H}  ^k , \qquad \overline{q}, \overline{p} = 0 , \qquad \widehat{ u } ^{\mathrm{nor}} , \widehat{ v } ^{\mathrm{nor}} \perp \widehat{ V } ^{ k -1, \mathrm{tan} } , \qquad \widehat{ \rho } ^{\mathrm{nor}} , \widehat{ \eta } ^{\mathrm{nor}} \perp \widehat{ V } ^{ k , \mathrm{tan} } ,
  \end{equation*}
  and we may further separate $a ( \cdot , \cdot )$ into a pair of bilinear forms
  \begin{align*}
    \alpha \bigl( ( \sigma, u ) , ( \tau, v ) \bigr) &= - ( \sigma , \tau ) _{ \mathcal{T} _h } + ( u , \mathrm{d} \tau ) _{ \mathcal{T} _h } + ( \mathrm{d} \sigma , v ) _{ \mathcal{T} _h } + ( \mathrm{d} u , \mathrm{d} v ) _{ \mathcal{T} _h } ,\\
    \beta \bigl( ( \tau, v ) , ( \widehat{ v } ^{\mathrm{nor}} , \widehat{ \eta } ^{\mathrm{nor}} ) \bigr) &= - \langle \widehat{ v } ^{\mathrm{nor}} , \tau ^{\mathrm{tan}} \rangle _{ \partial \mathcal{T} _h } - \langle \widehat{ \eta } ^{\mathrm{nor}} , v ^{\mathrm{tan}} \rangle _{ \partial \mathcal{T} _h } .
  \end{align*}
  The inf-sup condition for $\beta ( \cdot , \cdot ) $ holds by
  another application of \cref{lem:duality_isometry} on each
  $ K \in \mathcal{T} _h $. Finally, using \cref{prop:dd_spaces}, the
  kernel of $ \xi \mapsto \beta ( \xi , \cdot ) $ is precisely
  $ H \Lambda ^{ k -1 } (\Omega) \times \mathfrak{H} ^{ k \perp } $,
  so the inf-sup condition for $\alpha ( \cdot , \cdot ) $ on the
  kernel is just that for the non-domain-decomposed Hodge--Laplace
  problem, cf.~\citet*[Theorem 3.2]{ArFaWi2010}.
\end{proof}

\section{Hybrid methods and static condensation}
\label{sec:hybrid}

In this section, we present a hybridization of the FEEC methods of
\cref{sec:feec} for the Hodge--Laplace problem, based on the
domain-decomposed variational principle \eqref{eqn:dd}. We then
perform static condensation of these methods, using the local solvers
to efficiently reduce the system to a smaller one involving only the
global variables. This condensed system is shown to be as small or
smaller than that for standard FEEC without hybridization, and we
prove an explicit formula for the number of reduced degrees of
freedom. Finally, we prove error estimates for the hybrid variables,
which approximate tangential and normal traces.

\subsection{Hybridized FEEC methods}
\label{sec:feec-h}

For each $ K \in \mathcal{T} _h $, let
$ W _h (K) \subset H \Lambda (K) $ be a finite-dimensional subcomplex,
so that
\begin{equation*}
  W _h \coloneqq \prod _{ K \in \mathcal{T} _h } W _h (K) , \qquad V _h \coloneqq V \cap W _h ,
\end{equation*}
are respectively subcomplexes of $W = H \Lambda ( \mathcal{T} _h ) $
and $ V = H \Lambda (\Omega) $.\footnote{As in \cref{rmk:dd_bc}, the
  arguments readily generalize to
  $ V = \mathring{ H } \Lambda (\Omega) $ or other choices of ideal
  boundary conditions.}  Let
$ \overline{ \mathfrak{H} } _h ^k \coloneqq \prod _{ K \in \mathcal{T}
  _h } \mathring{ \mathfrak{H} } _h ^k (K) $, where
$ \mathring{ \mathfrak{H} } _h ^k (K) $ is the space of local harmonic
$k$-forms in $ \mathring{ W } _h ^k (K) $, and let
$ \mathfrak{H} _h ^k $ be the space of global harmonic $k$-forms in
$ V _h ^k $. Next, we define broken and unbroken tangential traces,
\begin{equation*}
  \widehat{ W } _h ^{k, \mathrm{tan}} \coloneqq \{ v _h ^{\mathrm{tan}} : v _h \in W _h ^k \} ,   \qquad \widehat{ V } _h ^{k, \mathrm{tan}} \coloneqq \{ v _h ^{\mathrm{tan}} : v _h \in V _h ^k \} = \widehat{ V } ^{k, \mathrm{tan}} \cap \widehat{ W } _h ^{ k , \mathrm{tan} } ,
\end{equation*}
and take
$ \widehat{ W } _h ^{ k, \mathrm{nor} } \coloneqq ( \widehat{ W } _h
^{ k , \mathrm{tan}} ) ^\ast $. Since
$ \langle \cdot , \cdot \rangle _{ \partial \mathcal{T} _h } $ is a
duality pairing, we use this same notation for the pairing of
$ \widehat{ W } _h ^{ k , \mathrm{tan}} $ with its dual space
$ \widehat{ W } _h ^{ k, \mathrm{nor} } $.

\begin{example}[decomposition of \unboldmath $ \mathcal{P} _r ^\pm \Lambda $ elements]
  If $ \mathcal{T} _h $ is a conforming simplicial mesh and
  $ W _h ^k (K) = \mathcal{P} _r ^\pm \Lambda ^k (K) $ for each
  $ K \in \mathcal{T} _h $, then
  $ V _h ^k = \mathcal{P} _r ^\pm \Lambda ^k ( \mathcal{T} _h )
  $. Since simplices are contractible, the local harmonic forms are
  trivial for $ k < n $ and piecewise constants for $ k = n $, and the
  global harmonic forms $ \mathfrak{H} _h ^k $ are as in
  \cref{sec:feec}.

  For each $ K \in \mathcal{T} _h $, the broken trace space
  $ \widehat{ W } _h ^{k, \mathrm{tan}} $ contains tangential traces
  of $ \mathcal{P} _r ^\pm \Lambda ^k (K) $, so the degrees of freedom
  are just those living on $ \partial K $. Since this is a broken
  space, the degrees of freedom need not match on interior facets
  $ e = \partial K ^+ \cap \partial K ^- $. By contrast,
  $ \widehat{ V } _h ^{ k , \mathrm{tan} } $ contains tangential
  traces from the unbroken space
  $ \mathcal{P} _r ^\pm \Lambda ^k ( \mathcal{T} _h ) $, so the
  degrees of freedom are single-valued. Finally, we can use duality to
  identify $ \widehat{ W } _h ^{k, \mathrm{nor}} $ with the degrees of
  freedom for $ \widehat{ W } _h ^{k, \mathrm{tan}} $. Since these
  tangential traces are piecewise polynomial and thus in
  $ L ^2 ( \partial \mathcal{T} _h ) $, for implementation we may
  simply take
  $ \widehat{ W } _h ^{k, \mathrm{nor}} = \widehat{ W } _h ^{ k,
    \mathrm{tan} } $ where
  $ \langle \cdot , \cdot \rangle _{ \partial \mathcal{T} _h } $ is
  the $ L ^2 $ inner product.
\end{example}

Now that we have defined these finite-dimensional subspaces, we may
consider the following finite-dimensional version of the
domain-decomposed variational problem \eqref{eqn:dd}: Find
\begin{align*} 
  \tag{local variables} \sigma _h &\in W _h ^{ k -1 } , & u _h &\in W _h  ^k , & \overline{p} _h &\in \overline{ \mathfrak{H}  } _h ^k , & \widehat{ u } _h ^{\mathrm{nor}} &\in \widehat{ W } _h ^{k - 1, \mathrm{nor}} , & \widehat{ \rho } _h ^{\mathrm{nor}} &\in \widehat{ W } _h ^{k, \mathrm{nor}} , \\
  \tag{global variables} && p _h &\in \mathfrak{H} _h ^k ,& \overline{ u } _h &\in \overline{ \mathfrak{H}  } _h ^k , & \widehat{ \sigma } _h ^{\mathrm{tan}} &\in \widehat{ V } _h ^{k-1, \mathrm{tan}} , & \widehat{ u } _h ^{\mathrm{tan}} &\in \widehat{ V } _h ^{k, \mathrm{tan}} ,
\end{align*} 
satisfying
\begin{subequations}
  \label{eqn:feec-h}
  \begin{alignat}{2}
    ( \sigma _h , \tau _h ) _{ \mathcal{T} _h } - ( u _h , \mathrm{d} \tau _h ) _{ \mathcal{T} _h } + \langle \widehat{ u } _h ^{\mathrm{nor}} , \tau _h ^{\mathrm{tan}} \rangle _{ \partial \mathcal{T} _h } &= 0, \quad &\forall \tau _h &\in W _h ^{ k -1 } , \label{eqn:feec-h_tau}\\
    (\mathrm{d} \sigma _h , v _h ) _{ \mathcal{T} _h } + ( \mathrm{d} u _h , \mathrm{d} v _h ) _{ \mathcal{T} _h } + ( \overline{ p } _h + p _h , v _h ) _{ \mathcal{T} _h } - \langle \widehat{ \rho } _h ^{\mathrm{nor}} , v _h ^{\mathrm{tan}} \rangle _{ \partial \mathcal{T} _h } &= ( f, v _h ) _{ \mathcal{T} _h } , &\forall v _h &\in W _h ^k , \label{eqn:feec-h_v}\\
   ( \overline{ u } _h - u _h , \overline{ q } _h ) _{ \mathcal{T} _h } &= 0 ,\quad &\forall \overline{ q } _h &\in \overline{ \mathfrak{H}  } _h ^k , \label{eqn:feec-h_qbar}\\
  \langle \widehat{ \sigma } _h ^{\mathrm{tan}} - \sigma _h ^{\mathrm{tan}} , \widehat{ v } _h ^{\mathrm{nor}} \rangle _{ \partial \mathcal{T} _h } &= 0, \quad &\forall \widehat{ v } _h ^{\mathrm{nor}} &\in \widehat{ W } _h ^{k-1, \mathrm{nor}}, \label{eqn:feec-h_vnor}\\
  \langle \widehat{ u } _h ^{\mathrm{tan}} - u _h ^{\mathrm{tan}} , \widehat{ \eta } _h ^{\mathrm{nor}} \rangle _{ \partial \mathcal{T} _h } &= 0, \quad &\forall \widehat{ \eta } _h ^{\mathrm{nor}} &\in \widehat{ W } _h ^{k, \mathrm{nor}} , \label{eqn:feec-h_etanor}\\
  (u _h , q _h ) _{ \mathcal{T} _h } &= 0, \quad &\forall q _h &\in \mathfrak{H} _h ^k , \label{eqn:feec-h_q}\\
   ( \overline{ p } _h , \overline{ v } _h ) _{ \mathcal{T} _h } &= 0 ,\quad &\forall \overline{ v } _h &\in \overline{ \mathfrak{H}  } _h ^k , \label{eqn:feec-h_vbar}\\
  \langle \widehat{ u } _h ^{\mathrm{nor}} , \widehat{ \tau } _h ^{\mathrm{tan}} \rangle _{ \partial \mathcal{T} _h } &= 0 , \quad&\forall \widehat{ \tau } _h ^{\mathrm{tan}} &\in \widehat{ V } _h ^{k-1, \mathrm{tan}} , \label{eqn:feec-h_tautan}\\
  \langle \widehat{ \rho } _h ^{\mathrm{nor}} , \widehat{ v } _h ^{\mathrm{tan}} \rangle _{ \partial \mathcal{T} _h } &= 0 , \quad&\forall \widehat{ v } _h ^{\mathrm{tan}} &\in \widehat{ V } _h ^{k, \mathrm{tan}} . \label{eqn:feec-h_vtan}
\end{alignat}
\end{subequations}
Given values for the global variables,
\eqref{eqn:feec-h_tau}--\eqref{eqn:feec-h_etanor} amounts to solving
the local FEEC problems
\begin{subequations}
  \label{eqn:feec_local}
  \begin{alignat}{2}
    ( \sigma _h , \tau _h ) _K - ( u _h , \mathrm{d} \tau _h ) _K &= 0 ,\quad &\forall \tau _h &\in \mathring{ W } _h ^{ k -1 } (K) , \label{eqn:feec_local_tau} \\
    ( \mathrm{d} \sigma _h , v _h ) _K + ( \mathrm{d} u _h ,
    \mathrm{d} v _h ) _K + ( \overline{p} _h , v _h ) _K &= (f - p _h , v _h ) _K ,\quad &\forall v _h
    &\in \mathring{ W } _h ^k (K) , \label{eqn:feec_local_v}\\
    (u _h , \overline{q} _h ) _K &= ( \overline{u} _h , \overline{q} _h ) _K ,\quad &\forall \overline{q} _h &\in \mathring{
      \mathfrak{H} } _h ^k (K) , \label{eqn:feec_local_q}
  \end{alignat}
\end{subequations}
with essential tangential boundary conditions
$ \sigma _h ^{\mathrm{tan}} = \widehat{ \sigma } _h ^{\mathrm{tan}} $
and $ u _h ^{\mathrm{tan}} = \widehat{ u } _h ^{\mathrm{tan}} $.

The following result shows that this is indeed a hybridization of the
global FEEC problem \eqref{eqn:feec}, which in particular implies
well-posedness of \eqref{eqn:feec-h}. The proof is quite similar to
\cref{thm:dd}, but there are two important distinctions. First,
$ \widehat{ u } _h ^{\mathrm{nor}} $ and
$ \widehat{ \rho } _h ^{\mathrm{nor}} $ generally \emph{do not} equal
the normal traces of $ u _h $ and $ \rho _h = \mathrm{d} u _h $,
except weakly, in a Galerkin sense. Furthermore, a crucial role is
played by the specific choice of broken tangential and normal trace
spaces above, particularly the fact that they are in duality with
respect to
$ \langle \cdot , \cdot \rangle _{ \partial \mathcal{T} _h } $.

\begin{theorem}
  \label{thm:feec-h}
  The following are equivalent:
  \begin{itemize}
  \item
    $ ( \sigma _h , u _h , \overline{p} _h , \widehat{ u } _h
    ^{\mathrm{nor}} , \widehat{ \rho } _h ^{\mathrm{nor}} , p _h ,
    \overline{ u } _h , \widehat{ \sigma } _h ^{\mathrm{tan}} ,
    \widehat{ u } _h ^{\mathrm{tan}} ) $ is a solution to
    \eqref{eqn:feec-h}.

  \item $ ( \sigma _h , u _h , p _h ) $ is a solution to
    \eqref{eqn:feec}, and furthermore, $ \overline{p} _h = 0 $,
    $ \widehat{ u } _h ^{\mathrm{nor}} $ and
    $ \widehat{ \rho } _h ^{\mathrm{nor}} $ are uniquely determined by
    \eqref{eqn:feec-h_tau}--\eqref{eqn:feec-h_v}, $ \overline{u} _h $
    is the projection of $ u _h $ onto
    $ \overline{ \mathfrak{H} } _h ^k $,
    $ \widehat{ \sigma } _h ^{\mathrm{tan}} = \sigma _h
    ^{\mathrm{tan}} $, and
    $ \widehat{ u } _h ^{\mathrm{tan}} = u _h ^{\mathrm{tan}} $.
  \end{itemize}
\end{theorem}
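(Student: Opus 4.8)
The plan is to adapt the proof of \cref{thm:dd} to the finite-dimensional setting, mirroring its two-way structure: from a solution of \eqref{eqn:feec-h} I would read off the claimed identities and then recover \eqref{eqn:feec} by restricting test functions to the conforming spaces; conversely, from a solution of \eqref{eqn:feec} I would construct the Lagrange multipliers and verify each equation of \eqref{eqn:feec-h}. The two features that require care — and that the statement is designed to highlight — are (i) that the discrete multipliers $\widehat{u}_h^{\mathrm{nor}}$, $\widehat{\rho}_h^{\mathrm{nor}}$ are \emph{not} the normal traces of $u_h$, $\mathrm{d}u_h$, but rather the functionals on the broken tangential-trace spaces induced by \eqref{eqn:feec-h_tau} and \eqref{eqn:feec-h_v}; and (ii) that $\widehat{W}_h^{k,\mathrm{nor}}$ is the \emph{full} dual of $\widehat{W}_h^{k,\mathrm{tan}}$, which is what upgrades the Galerkin single-valuedness conditions \eqref{eqn:feec-h_vnor}--\eqref{eqn:feec-h_etanor} into honest equalities of traces.

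For the forward implication, I would first extract the easy identities: testing \eqref{eqn:feec-h_vbar} against $\overline{v}_h = \overline{p}_h$ gives $\overline{p}_h = 0$, and \eqref{eqn:feec-h_qbar} says precisely that $\overline{u}_h$ is the $L^2$-projection of $u_h$ onto $\overline{\mathfrak{H}}_h^k$. Next, since $\sigma_h^{\mathrm{tan}} - \widehat{\sigma}_h^{\mathrm{tan}} \in \widehat{W}_h^{k-1,\mathrm{tan}}$ and $\widehat{W}_h^{k-1,\mathrm{nor}}$ is its full dual, \eqref{eqn:feec-h_vnor} forces $\widehat{\sigma}_h^{\mathrm{tan}} = \sigma_h^{\mathrm{tan}}$, and likewise \eqref{eqn:feec-h_etanor} gives $\widehat{u}_h^{\mathrm{tan}} = u_h^{\mathrm{tan}}$; hence $\sigma_h^{\mathrm{tan}} \in \widehat{V}_h^{k-1,\mathrm{tan}}$ and $u_h^{\mathrm{tan}} \in \widehat{V}_h^{k,\mathrm{tan}}$. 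Subtracting off conforming elements with the same traces leaves broken forms whose tangential trace vanishes on all of $\partial\mathcal{T}_h$, which are therefore $H\Lambda(\Omega)$-conforming, so $\sigma_h \in V_h^{k-1}$ and $u_h \in V_h^k$. Now I would test \eqref{eqn:feec-h_tau} against $\tau_h \in V_h^{k-1}$ and \eqref{eqn:feec-h_v} against $v_h \in V_h^k$: the boundary pairings vanish by \eqref{eqn:feec-h_tautan}--\eqref{eqn:feec-h_vtan}, the broken inner products collapse to inner products over $\Omega$, and $\overline{p}_h = 0$, so \eqref{eqn:feec_tau}--\eqref{eqn:feec_v} drop out, while \eqref{eqn:feec-h_q} is literally \eqref{eqn:feec_q}. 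Finally, taking $\tau_h$ (resp.\ $v_h$) with vanishing tangential trace in \eqref{eqn:feec-h_tau} (resp.\ \eqref{eqn:feec-h_v}) shows the right-hand side there depends only on $\tau_h^{\mathrm{tan}}$ (resp.\ $v_h^{\mathrm{tan}}$), which pins down $\widehat{u}_h^{\mathrm{nor}}$ and $\widehat{\rho}_h^{\mathrm{nor}}$ uniquely.

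For the converse, given $(\sigma_h, u_h, p_h)$ solving \eqref{eqn:feec} — with $\sigma_h \in V_h^{k-1}$, $u_h \in V_h^k$ genuinely unbroken — I would set $\overline{p}_h = 0$, take $\overline{u}_h$ to be the projection of $u_h$ onto $\overline{\mathfrak{H}}_h^k$, $\widehat{\sigma}_h^{\mathrm{tan}} = \sigma_h^{\mathrm{tan}}$, $\widehat{u}_h^{\mathrm{tan}} = u_h^{\mathrm{tan}}$, and define $\widehat{u}_h^{\mathrm{nor}} \in \widehat{W}_h^{k-1,\mathrm{nor}}$, $\widehat{\rho}_h^{\mathrm{nor}} \in \widehat{W}_h^{k,\mathrm{nor}}$ by the rules that make \eqref{eqn:feec-h_tau} and \eqref{eqn:feec-h_v} hold. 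These are well defined on the broken trace spaces because a broken test function with vanishing tangential trace lies in $V_h$, so the would-be right-hand side vanishes by \eqref{eqn:feec_tau}, \eqref{eqn:feec_v}. Then \eqref{eqn:feec-h_qbar}, \eqref{eqn:feec-h_vnor}, \eqref{eqn:feec-h_etanor}, \eqref{eqn:feec-h_q}, \eqref{eqn:feec-h_vbar} are immediate, and \eqref{eqn:feec-h_tautan}, \eqref{eqn:feec-h_vtan} follow by lifting each unbroken trace back to an element of $V_h^{k-1}$ or $V_h^k$ and invoking \eqref{eqn:feec_tau} or \eqref{eqn:feec_v}. Since \eqref{eqn:feec} is well posed and the correspondence just described is a bijection, well-posedness of \eqref{eqn:feec-h} follows.

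I expect the main obstacle to be the bookkeeping around the dual trace spaces rather than any deep estimate: one must check carefully that the functionals on the right of \eqref{eqn:feec-h_tau}--\eqref{eqn:feec-h_v} descend through the surjective trace map $W_h^k \to \widehat{W}_h^{k,\mathrm{tan}}$ (so the multipliers are well defined and unique), and that weak single-valuedness tested against the \emph{full} dual $\widehat{W}_h^{k,\mathrm{nor}}$ yields genuine conformity. This is exactly the point at which the present construction parts ways with the alternative decomposition of \cref{sec:arnold-brezzi}, where testing only against unbroken normal traces gives merely weak single-valuedness and hence nonconforming methods.
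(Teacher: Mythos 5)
Your proposal is correct and follows essentially the same route as the paper's proof: the forward direction reads off the identities, uses the duality of $\widehat{W}_h^{k,\mathrm{nor}}$ with $\widehat{W}_h^{k,\mathrm{tan}}$ to upgrade \eqref{eqn:feec-h_vnor}--\eqref{eqn:feec-h_etanor} to genuine trace equalities and hence conformity of $\sigma_h$ and $u_h$, then restricts test functions to $V_h$; the converse defines the multipliers from \eqref{eqn:feec-h_tau}--\eqref{eqn:feec-h_v} and obtains \eqref{eqn:feec-h_tautan}--\eqref{eqn:feec-h_vtan} by combining with \eqref{eqn:feec_tau}--\eqref{eqn:feec_v}. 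The only cosmetic difference is that you argue conformity by subtracting a conforming lift with matching trace, where the paper cites \cref{prop:dd_spaces} directly; both are valid.
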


\begin{proof}
  Suppose we have a solution to \eqref{eqn:feec-h}. The claimed
  equalities are immediate from the variational problem, with
  uniqueness of the broken tangential and normal traces following from
  the fact that these spaces are in duality with respect to
  $ \langle \cdot , \cdot \rangle _{ \partial \mathcal{T} _h } $, so
  it remains only to show that $ ( \sigma _h , u _h , p _h ) $ solves
  \eqref{eqn:feec}. Since
  $ \sigma _h ^{\mathrm{tan}} = \widehat{ \sigma } _h ^{\mathrm{tan}}
  $ and $ u _h ^{\mathrm{tan}} = \widehat{ u } _h ^{\mathrm{tan}} $,
  \cref{prop:dd_spaces} implies that
  $ \sigma _h \in V _h ^{ k -1 } $ and
  $ u _h \in V _h ^k $. Taking
  $ \tau _h \in V _h ^{ k -1 } $ and
  $ v _h \in V _h ^k $ in
  \eqref{eqn:feec-h_tau}--\eqref{eqn:feec-h_v}, the normal trace terms
  vanish by \eqref{eqn:feec-h_tautan}--\eqref{eqn:feec-h_vtan}, and we
  obtain \eqref{eqn:feec_tau}--\eqref{eqn:feec_v}. Finally,
  \eqref{eqn:feec-h_q} is the same as \eqref{eqn:feec_q}, which proves
  the forward direction.

  Conversely, given a solution $ ( \sigma _h , u _h , p _h ) $ to
  \eqref{eqn:feec}, it is immediate that
  \eqref{eqn:feec-h_tau}--\eqref{eqn:feec-h_vbar} hold, again using
  the fact that
  $ \langle \cdot , \cdot \rangle _{ \partial \mathcal{T} _h } $ is a
  dual pairing to get uniqueness of the broken tangential and normal
  traces. For the remaining two equations, first observe that
  combining \eqref{eqn:feec_tau} and \eqref{eqn:feec-h_tau} gives
  $ \langle \widehat{ u } _h ^{\mathrm{nor}} , \tau _h ^{\mathrm{tan}}
  \rangle _{ \partial \mathcal{T} _h } = 0 $ for
  $ \tau _h \in V _h ^{ k -1 } $, which implies
  \eqref{eqn:feec-h_tautan}. Similarly, combining \eqref{eqn:feec_v}
  and \eqref{eqn:feec-h_v} gives
  $ \langle \widehat{ \rho } _h ^{\mathrm{nor}} , v _h ^{\mathrm{tan}}
  \rangle _{ \partial \mathcal{T} _h } = 0 $ for
  $ v _h \in V _h ^k $, which implies
  \eqref{eqn:feec-h_vtan}.
\end{proof}

\subsection{Static condensation}
\label{sec:condensation}

We next perform static condensation of the hybridized FEEC method
\eqref{eqn:feec-h}, eliminating the local variables using the local
solvers \eqref{eqn:feec_local} and thereby obtaining a condensed
system involving only the global variables. We present the condensed
system both in a matrix-free variational form and as a matrix Schur
complement, and we prove that this system is as small or smaller than
the standard FEEC method \eqref{eqn:feec} without hybridization.

As we did in \cref{sec:saddle} for the infinite-dimensional problem,
we may write the hybridized FEEC method \eqref{eqn:feec-h} as a saddle
point problem,
\begin{subequations}
  \label{eqn:discrete_saddle}
\begin{alignat}{2}
  a( x _h , x _h ^\prime ) + b ( x _h ^\prime , y _h ) &= F ( x _h ^\prime ) , \quad &\forall x _h ^\prime &\in X _h , \label{eqn:discrete_saddle_x}\\
  b ( x _h , y _h ^\prime ) &= G ( y _h ^\prime ) , \quad &\forall y _h ^\prime &\in Y _h \label{eqn:discrete_saddle_y}.
\end{alignat}
\end{subequations}
Since the local FEEC solvers \eqref{eqn:feec_local} corresponding to
$ a ( \cdot , \cdot ) $ are well-posed, for any given $F$ and $ y _h $
we can write the solution to \eqref{eqn:discrete_saddle_x} as
$ x _h = \mathsf{X} _F + \mathsf{X} _{y_h} $, where
\begin{equation*}
  a ( \mathsf{X} _F , x _h ^\prime ) = F ( x _h ^\prime ) , \qquad a ( \mathsf{X} _{y_h} , x _h ^\prime ) = - b ( x _h ^\prime , y _h ) , \qquad \forall x _h ^\prime \in X _h .
\end{equation*}
This is an efficient local computation that may be done
element-by-element in parallel. Substituting this into
\eqref{eqn:discrete_saddle_y} gives a reduced problem involving only the global
variables: Find $ y _h \in Y _h $ satisfying
\begin{equation}
  \label{eqn:sc}
  b ( \mathsf{X} _{y _h} , y _h  ^\prime ) = G( y _h ^\prime ) - b ( \mathsf{X} _F , y _h ^\prime ) , \quad \forall y _h ^\prime \in Y _h .
\end{equation}
This procedure of eliminating variables using local solvers is known
as \emph{static condensation}. Once the condensed system has been
solved for the global variables, the local variables may be recovered
element-by-element, if desired, using the local solvers. Furthermore,
we may use linearity to separate the influence of the individual
components, computing $ \mathsf{X} _F = \mathsf{X} _f $ and
$ \mathsf{X} _{y _h} = \mathsf{X} _{p _h} + \mathsf{X} _{\overline{u}
  _h} + \mathsf{X} _{\widehat{ \sigma } ^{\mathrm{tan}} _h} +
\mathsf{X} _{\widehat{ u } ^{\mathrm{tan}} _h} $.

Given a finite element basis, \eqref{eqn:discrete_saddle} may also be written
in the block-matrix form
\begin{equation*}
  \begin{bmatrix}
    A & B ^T \\
    B 
  \end{bmatrix}
  \begin{bmatrix}
    x _h \\
    y _h 
  \end{bmatrix} =
  \begin{bmatrix}
    F _h \\
    G _h 
  \end{bmatrix}.
\end{equation*} 
Since the matrix $A$ corresponds to the local solvers
\eqref{eqn:feec_local}, it has a block-diagonal structure, with blocks
corresponding to each $ K \in \mathcal{T} _h $, and can therefore be
inverted efficiently block-by-block. Given $F$ and $ y _h $, we can
locally solve
\begin{equation*}
  A \mathsf{X} _F = F _h , \qquad A \mathsf{X} _{ y _h } = - B ^T y _h \quad \Longrightarrow \quad x _h = \mathsf{X} _F + \mathsf{X} _{ y _h } = A ^{-1} F _h - A ^{-1} B ^T y _h .
\end{equation*}
Substituting this expression into $ B x _h = G _h $ gives the condensed system
\begin{equation*}
  - B A ^{-1} B ^T y _h = G _h - B A ^{-1} F _h ,
\end{equation*}
which is the matrix representation of the condensed variational
problem \eqref{eqn:sc}. Here, the condensed stiffness
matrix $ - B A ^{-1} B ^T $ is precisely the \emph{Schur complement}
of the original stiffness matrix $ \bigl[ \begin{smallmatrix}
  A & B ^T \\
  B
\end{smallmatrix} \bigr] $.

\begin{remark}
  The classical static condensation technique of \citet{Guyan1965} did
  not use hybridization, but simply partitioned the matrix system into
  blocks corresponding to internal and facet degrees of freedom, then
  applied the Schur complement approach above to eliminate the
  interior degrees of freedom. A similar approach has been applied to
  edge elements for Maxwell's equations, as discussed in the survey by
  \citet[Section 4.5]{LeMo2005}. The discovery of the relationship
  between \citeauthor{Guyan1965}'s static condensation and
  hybridization is more recent, cf.~\citet{Cockburn2016}.
\end{remark}

The next result proves that in full generality---without assumptions
on the topology of $ K \in \mathcal{T} _h $ or the elements used---the
condensed system \eqref{eqn:sc} on
$ Y _h = \mathfrak{H} _h ^k \times \overline{ \mathfrak{H} } _h ^k
\times \widehat{ V } _h ^{ k -1 , \mathrm{tan}} \times \widehat{ V }
_h ^{ k , \mathrm{tan}} $ is as small or smaller than the standard
FEEC system \eqref{eqn:feec} on
$ V _h ^{ k -1 } \times V _h ^k \times \mathfrak{H} _h ^k $ without
hybridization. Since the space $ \mathfrak{H} _h ^k $ appears in both
systems, it suffices to compare
$ \dim \overline{ \mathfrak{H} } _h ^k + \dim \widehat{ V } _h ^{ k -1
  , \mathrm{tan} } + \dim \widehat{ V } _h ^{ k , \mathrm{tan} } $
(condensed) with $ \dim V _h ^{k-1} + \dim V _h ^k $ (standard FEEC).

\begin{theorem}
  \label{thm:dof}
  We have the equality
  \begin{equation}
    \label{eqn:dof}
    \begin{multlined}
      (\dim V _h ^{ k -1 } + \dim V _h ^k) - 
      (\dim \overline{ \mathfrak{H}  } _h ^k + \dim \widehat{ V } _h ^{k-1,\mathrm{tan}}  + \dim \widehat{ V } _h ^{k,\mathrm{tan}} )\\
      = \sum _{ K \in \mathcal{T} _h } \bigl( \dim \mathring{ W } _h ^{ k -1 } (K) + \dim \mathring{ \mathfrak{B}  } _h ^k (K) + \dim \mathring{ \mathfrak{Z}  } _h ^{k\perp} (K) \bigr) .
    \end{multlined}
  \end{equation}
  Consequently, the size of the hybridized and condensed FEEC system
  \eqref{eqn:sc} is always less than or equal to that of the standard
  FEEC system \eqref{eqn:feec}, with equality if and only if
  $ \mathring{ W } _h ^{ k -1 } (K) $ is trivial and
  $ \mathring{ W } _h ^k (K) = \mathring{ \mathfrak{H} } _h ^k (K) $
  for all $ K \in \mathcal{T} _h $.
\end{theorem}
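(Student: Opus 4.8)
The plan is to rewrite the left-hand side of \eqref{eqn:dof} as a sum of element-by-element counts and match it to the right-hand side, using two ingredients: a short exact sequence relating $V_h^j$ to its element-interior subspaces and single-valued tangential traces, and the discrete Hodge decomposition of \cref{sec:feec} applied to the \emph{element-local} subcomplex $\mathring{W}_h(K)$.

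First I would record that, for $j \in \{k-1,\,k\}$, the tangential trace map $v_h \mapsto v_h^{\mathrm{tan}}$ on $\partial\mathcal{T}_h$ fits into a short exact sequence
\[
  0 \longrightarrow \prod_{K \in \mathcal{T}_h} \mathring{W}_h^j(K) \longrightarrow V_h^j \longrightarrow \widehat{V}_h^{j,\mathrm{tan}} \longrightarrow 0 .
\]
Surjectivity onto $\widehat{V}_h^{j,\mathrm{tan}}$ is the definition of that space. For the kernel, a broken form whose tangential trace vanishes on every $\partial K$ restricts to an element of $\mathring{W}_h^j(K)$ on each $K$; conversely, any such collection has zero---hence trivially single-valued---tangential trace, so it lies in $H\Lambda^j(\Omega)$ by \cref{prop:dd_spaces} and therefore in $V_h^j = V \cap W_h^j$. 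Rank--nullity then gives $\dim V_h^j = \dim \widehat{V}_h^{j,\mathrm{tan}} + \sum_{K \in \mathcal{T}_h} \dim \mathring{W}_h^j(K)$. Substituting this for $j = k-1$ and $j = k$ into the left-hand side of \eqref{eqn:dof}, and using $\dim \overline{\mathfrak{H}}_h^k = \sum_K \dim \mathring{\mathfrak{H}}_h^k(K)$, collapses it to $\sum_K \bigl( \dim \mathring{W}_h^{k-1}(K) + \dim \mathring{W}_h^k(K) - \dim \mathring{\mathfrak{H}}_h^k(K) \bigr)$.

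Next I would apply the discrete Hodge decomposition of \cref{sec:feec} to the finite-dimensional subcomplex $\mathring{W}_h(K)$, with its exact, closed, and harmonic subspaces $\mathring{\mathfrak{B}}_h^k(K)$, $\mathring{\mathfrak{Z}}_h^k(K)$, $\mathring{\mathfrak{H}}_h^k(K)$: this gives the $L^2(K)$-orthogonal splitting $\mathring{W}_h^k(K) = \mathring{\mathfrak{B}}_h^k(K) \oplus \mathring{\mathfrak{H}}_h^k(K) \oplus \mathring{\mathfrak{Z}}_h^{k\perp}(K)$, hence $\dim \mathring{W}_h^k(K) - \dim \mathring{\mathfrak{H}}_h^k(K) = \dim \mathring{\mathfrak{B}}_h^k(K) + \dim \mathring{\mathfrak{Z}}_h^{k\perp}(K)$. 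Feeding this into the collapsed sum from the previous step produces exactly the right-hand side of \eqref{eqn:dof}, establishing the identity.

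Finally, for the comparison of system sizes: \eqref{eqn:feec} has $\dim V_h^{k-1} + \dim V_h^k + \dim \mathfrak{H}_h^k$ unknowns, while \eqref{eqn:sc} has $\dim \mathfrak{H}_h^k + \dim \overline{\mathfrak{H}}_h^k + \dim \widehat{V}_h^{k-1,\mathrm{tan}} + \dim \widehat{V}_h^{k,\mathrm{tan}}$ unknowns, so their difference is precisely the left-hand side of \eqref{eqn:dof}, which by the above is a sum of nonnegative integers. Equality forces each summand $\dim \mathring{W}_h^{k-1}(K) + \dim \mathring{\mathfrak{B}}_h^k(K) + \dim \mathring{\mathfrak{Z}}_h^{k\perp}(K)$ to vanish; since $\mathring{\mathfrak{B}}_h^k(K) = \mathrm{d}\,\mathring{W}_h^{k-1}(K)$, this amounts to $\mathring{W}_h^{k-1}(K)$ being trivial and $\mathring{\mathfrak{Z}}_h^{k\perp}(K) = 0$, which by the local Hodge decomposition is equivalent to $\mathring{W}_h^{k-1}(K)$ trivial and $\mathring{W}_h^k(K) = \mathring{\mathfrak{H}}_h^k(K)$, and conversely. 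The one step that needs genuine care is identifying the kernel of the short exact sequence---verifying that the element-interior spaces actually assemble into a conforming subspace of $V_h^j$---which is where \cref{prop:dd_spaces} is essential; the rest is rank--nullity and the linear algebra of orthogonal decompositions.
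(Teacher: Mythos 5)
Your argument is correct and follows essentially the same route as the paper's proof: identify $\widehat{V}_h^{j,\mathrm{tan}}$ as the image of $V_h^j$ under the tangential trace map with kernel $\prod_K \mathring{W}_h^j(K)$ (you package this as a short exact sequence and justify the kernel via \cref{prop:dd_spaces}, which the paper asserts more briefly), apply rank--nullity, and then use the local discrete Hodge decomposition of $\mathring{W}_h^k(K)$ to absorb $\overline{\mathfrak{H}}_h^k$. Your concluding discussion of the size comparison and the equality condition is also consistent with the paper, which leaves that part implicit after establishing \eqref{eqn:dof}.
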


\begin{proof}
  By definition, $ \widehat{ V } _h ^{ k, \mathrm{tan}} $ is the image
  of $ V _h ^k $ under the tangential trace
  map. Therefore, the rank-nullity theorem implies that their
  dimensions differ by the dimension of the kernel, i.e.,
  \begin{equation*}
    \dim V _h ^k - \dim \widehat{ V } _h ^{ k, \mathrm{tan} }
    = \dim \{ v _h \in V _h ^k : v _h ^{\mathrm{tan}} = 0 \} 
    = \dim \prod _{ K \in \mathcal{T} _h } \mathring{ W } _h ^k (K) 
    = \sum _{ K \in \mathcal{T} _h } \dim \mathring{ W } _h ^k (K) .
  \end{equation*}
  Applying the discrete Hodge decomposition to each
  $ \mathring{ W } _h ^k (K) $ and using
  $ \overline{ \mathfrak{H} } _h ^k \coloneqq \prod _{ K \in \mathcal{T} _h }
  \mathring{ \mathfrak{H} } _h ^k (K) $ gives
  \begin{equation*}
    \sum _{ K \in \mathcal{T} _h } \dim \mathring{ W } _h ^k (K) =  \dim \overline{ \mathfrak{H}  } _h ^k + \sum _{ K \in \mathcal{T} _h } \bigl( \dim \mathring{ \mathfrak{B}  } _h ^k (K) + \dim \mathring{ \mathfrak{Z}  } _h ^{k\perp} (K) \bigr) .
  \end{equation*}
  Combining this with the previous expression and the corresponding
  one for
  $ \dim V _h ^{k-1} - \dim \widehat{ V } _h ^{ k - 1, \mathrm{tan} }
  $ implies \eqref{eqn:dof}, which completes the proof.
\end{proof}

We now give an explicit count of the reduced degrees of freedom when
$ \mathcal{T} _h $ is a simplicial mesh and
$ \mathcal{P} _r ^\pm \Lambda $ elements are used. \citet*[Sections
4.5--4.6]{ArFaWi2006} show that for $ r \geq 1 $,
\begin{equation*}
  \dim \mathring{ \mathcal{P} } _r \Lambda ^k (K) = \binom{r-1}{n-k}\binom{r+k}{k}, \qquad \dim \mathring{ \mathcal{P} } _r ^- \Lambda ^k (K) = \binom{n}{k}\binom{r+k-1}{n},
\end{equation*}
with the convention that $ \binom{a}{b} = 0 $ when $ b < 0 $ or
$ b > a $. Applying these formulas to the stable pairs of spaces for
FEEC given in \eqref{eqn:stable_pairs}, we get
\begin{align*}
  \dim \mathring{ \mathcal{P} } _{ r + 1 } \Lambda ^{k-1} (K)
  &= \binom{r}{n-k+1}\binom{r+k}{k-1} ,
  & \dim \mathring{ \mathcal{P} } _{ r } \Lambda ^{k} (K)
  &= \binom{r-1}{n-k}\binom{r+k}{k} \quad (\text{if $ r \geq 1 $}),\\
  \dim \mathring{ \mathcal{P} } _{ r + 1 } ^- \Lambda ^{k-1} (K)
  &= \binom{n}{k-1}\binom{r+k-1}{n},
  & \dim \mathring{ \mathcal{P} } _{ r + 1 } ^- \Lambda ^{k} (K)
  &= \binom{n}{k}\binom{r+k}{n}.
\end{align*}
For each $ K \in \mathcal{T} _h $, these formulas count the number of
internal degrees of freedom, which are precisely the ones eliminated
by static condensation.

Since simplices are contractible, the local harmonic spaces are
trivial, except for
$ \mathring{ \mathfrak{H} } _h ^n (K) \cong \mathbb{R} $. When
$ k = n $, static condensation \emph{introduces} one global degree of
freedom per simplex, so in this case, the number of degrees of freedom
is reduced if and only if $ r \geq 1 $. When $ r = 0 $ (i.e., the
lowest-order RT and BDM methods), the degrees of freedom for $ u _h $
are simply replaced by those for $ \overline{ u } _h $.

By checking when the spaces above have dimension greater than zero, we
immediately obtain the following corollary to \cref{thm:dof}.

\begin{corollary}
  Let $ \mathcal{T} _h $ be a simplicial mesh and
  $ V _h ^{ k -1 } $, $ V _h ^k $ be one of
  the stable pairs in \eqref{eqn:stable_pairs}. The hybridized and
  condensed FEEC system \eqref{eqn:sc} is strictly smaller than the
  standard FEEC system \eqref{eqn:feec} if and only if $ r \geq 1 $
  and either
  \begin{itemize}
  \item $ V _h ^k = \mathcal{P} _r \Lambda ^k ( \mathcal{T} _h ) $
    with $ r \geq n - k + 1 $, or
  \item
    $ V _h ^k = \mathcal{P} _{r+1} ^- \Lambda ^k ( \mathcal{T} _h ) $
    with $ r \geq n - k $.
  \end{itemize} 
\end{corollary}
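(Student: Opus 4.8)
The plan is to read off the corollary from \cref{thm:dof}. By \eqref{eqn:dof}, the condensed system \eqref{eqn:sc} is \emph{strictly} smaller than the standard FEEC system \eqref{eqn:feec} precisely when the right-hand side of \eqref{eqn:dof} is nonzero, i.e., when at least one of $\dim \mathring{W}_h^{k-1}(K)$, $\dim \mathring{\mathfrak{B}}_h^k(K)$, $\dim \mathring{\mathfrak{Z}}_h^{k\perp}(K)$ is positive for some $K \in \mathcal{T}_h$. Since $\mathcal{T}_h$ is simplicial and the same pair from \eqref{eqn:stable_pairs} is used on every $K$, all three dimensions are independent of $K$, so it suffices to test positivity on a single simplex. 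First I would simplify the criterion: simplices are contractible, so the discrete Hodge decomposition of $\mathring{W}_h^k(K)$ gives $\dim \mathring{\mathfrak{B}}_h^k(K) + \dim \mathring{\mathfrak{Z}}_h^{k\perp}(K) = \dim \mathring{W}_h^k(K) - \dim \mathring{\mathfrak{H}}_h^k(K)$, where $\dim \mathring{\mathfrak{H}}_h^k(K) = 0$ for $k < n$ and $\dim \mathring{\mathfrak{H}}_h^n(K) = 1$. Thus the quantity to be made positive is $\dim \mathring{W}_h^{k-1}(K) + \dim \mathring{W}_h^k(K)$ when $k < n$, and $\dim \mathring{W}_h^{n-1}(K) + \dim \mathring{W}_h^n(K) - 1$ when $k = n$.

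Next I would substitute the explicit interior dimensions recorded just above. Since $\binom{r+k}{k} \neq 0$, the formula $\dim \mathring{\mathcal{P}}_r\Lambda^k(K) = \binom{r-1}{n-k}\binom{r+k}{k}$ shows this is positive exactly when $r \geq n-k+1$; since $\binom{n}{k} \neq 0$, the formula $\dim \mathring{\mathcal{P}}_{r+1}^-\Lambda^k(K) = \binom{n}{k}\binom{r+k}{n}$ is positive exactly when $r \geq n-k$. A parallel computation with $\dim \mathring{\mathcal{P}}_{r+1}\Lambda^{k-1}(K) = \binom{r}{n-k+1}\binom{r+k}{k-1}$ and $\dim \mathring{\mathcal{P}}_{r+1}^-\Lambda^{k-1}(K) = \binom{n}{k-1}\binom{r+k-1}{n}$ shows that both interior $(k-1)$-form spaces are positive exactly when $k \geq 1$ and $r \geq n-k+1$. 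In particular, whenever $\dim \mathring{W}_h^{k-1}(K) > 0$ one also has $\dim \mathring{W}_h^k(K) > 0$, under the same or a weaker bound on $r$; this is what lets the criterion be stated in terms of $V_h^k$ alone.

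It then remains to split into cases. If $k < n$, then $\dim \mathring{\mathfrak{H}}_h^k(K) = 0$ and $n - k + 1 \geq 2$, so $\dim \mathring{W}_h^{k-1}(K) + \dim \mathring{W}_h^k(K) > 0$ holds iff $r \geq n-k+1$ when $V_h^k = \mathcal{P}_r\Lambda^k(\mathcal{T}_h)$ and iff $r \geq n-k$ when $V_h^k = \mathcal{P}_{r+1}^-\Lambda^k(\mathcal{T}_h)$, and in both cases the bound already forces $r \geq 1$. If $k = n$, then $\dim \mathring{W}_h^n(K) = \binom{r+n}{n}$ for both admissible choices and $\dim \mathring{W}_h^{n-1}(K) > 0$ iff $r \geq 1$, so the element-wise count $\dim \mathring{W}_h^{n-1}(K) + \dim \mathring{W}_h^n(K) - 1$ is at least $1$ once $r \geq 1$, while for $r = 0$ (forcing $V_h^n = \mathcal{P}_1^-\Lambda^n(\mathcal{T}_h)$, the lowest-order RT/BDM flux space) it equals $0 + 1 - 1 = 0$. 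Assembling the cases gives exactly the stated equivalence, with the hypothesis $r \geq 1$ being precisely what excludes this borderline $k = n$, $r = 0$ situation.

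I expect the only point requiring care---the rest being elementary binomial bookkeeping---to be the $k = n$ case: one must carry along the $-1$ coming from the local harmonic form $\mathring{\mathfrak{H}}_h^n(K) \cong \mathbb{R}$ and check that it is exactly canceled by $\dim \mathring{W}_h^n(K) = 1$ when $r = 0$, so that hybridization leaves the global system size unchanged for, and only for, the lowest-order Raviart--Thomas and Brezzi--Douglas--Marini methods.
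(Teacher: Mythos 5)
Your proposal is correct and follows essentially the same route as the paper: apply \cref{thm:dof}, reduce the interior $k$-form count via the local discrete Hodge decomposition (with $\mathring{\mathfrak{H}}_h^k(K)$ trivial for $k<n$ and one-dimensional for $k=n$), and read off positivity from the binomial formulas for $\dim \mathring{\mathcal{P}}_{r+1}^{\pm}\Lambda^{k-1}(K)$ and $\dim \mathring{\mathcal{P}}_r\Lambda^k(K)$, $\dim \mathring{\mathcal{P}}_{r+1}^-\Lambda^k(K)$. The paper states this in one line, so your write-up simply makes explicit the same bookkeeping, including the borderline $k=n$, $r=0$ case where the eliminated interior degree of freedom for $u_h$ is exactly offset by the new one for $\overline{u}_h$.
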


\subsection{Error estimates for the hybrid variables}
\label{sec:error_estimates}

Let $ \{ \mathcal{T} _h \} $ be a shape-regular (but not necessarily
quasi-uniform) family of simplicial meshes of $\Omega$, where $ h _K $
denotes the diameter of $ K \in \mathcal{T} _h $ and
$ h \coloneqq \max _{ K \in \mathcal{T} _h } h _K $. We assume again
that $ V _h ^{ k -1 } $, $ V _h ^k $ is one of the stable pairs
\eqref{eqn:stable_pairs}. Error estimates are already known for
$ \sigma $, $u$, $p$ (\citet*{ArFaWi2006,ArFaWi2010}), and for
$ \overline{ u } $ when $ k = n $ (\citet*{DoRo1985,BrDoMa1985}), so
it only remains to prove estimates for the tangential and normal
traces.

The tangential traces are straightforward, since
$ \widehat{ \sigma } _h ^{\mathrm{tan}} = \sigma _h ^{\mathrm{tan}} $
and $ \widehat{ u } _h ^{\mathrm{tan}} = u _h ^{\mathrm{tan}} $. We
introduce a scaled version of the tangential trace norm from
\cref{sec:traces},
\begin{equation*}
  \lVERT \widehat{ \tau } ^{\mathrm{tan}} \rVERT _{\mathrm{tan}, \partial K } ^2 \coloneqq \inf \bigl\{ \lVert \tau \rVert _K ^2 + h _K ^2 \lVert \mathrm{d} \tau \rVert _K ^2 : \tau ^{\mathrm{tan}} = \widehat{ \tau } ^{\mathrm{tan}} \bigr\} ,
\end{equation*} 
and denote
$ \lVERT \cdot \rVERT _{ \mathrm{tan}, \partial \mathcal{T} _h } ^2
\coloneqq \sum _{ K \in \mathcal{T} _h } \lVERT \cdot \rVERT
_{\mathrm{tan}, \partial K } ^2 $. It is an easy consequence that the
errors for $ \sigma _h ^{\mathrm{tan}} $ and $ u _h ^{\mathrm{tan}} $
are controlled by those for $\sigma _h $ and $u _h $, which we now
state as a proposition.

\begin{proposition}
  For each $ K \in \mathcal{T} _h $, we have
  \begin{align*}
    \lVERT \sigma ^{\mathrm{tan}} - \sigma _h ^{\mathrm{tan}} \rVERT _{\mathrm{tan}, \partial K } ^2 &\leq  \lVert \sigma - \sigma _h \rVert _K ^2 + h _K ^2 \bigl\lVert \mathrm{d} ( \sigma - \sigma _h ) \bigr\rVert _K ^2 ,\\
    \lVERT u ^{\mathrm{tan}} - u _h ^{\mathrm{tan}} \rVERT _{\mathrm{tan}, \partial K } ^2 &\leq  \lVert u - u _h \rVert _K ^2 + h _K ^2 \bigl\lVert \mathrm{d} ( u - u _h ) \bigr\rVert _K ^2 .
  \end{align*}
  Consequently,
  \begin{align*}
    \lVERT \sigma ^{\mathrm{tan}} - \sigma _h ^{\mathrm{tan}} \rVERT _{\mathrm{tan}, \partial \mathcal{T} _h } ^2 &\leq \lVert \sigma - \sigma _h \rVert ^2 _\Omega + h ^2 \bigl\lVert \mathrm{d} ( \sigma - \sigma _h ) \bigr\rVert _\Omega ^2 ,\\
    \lVERT u ^{\mathrm{tan}} - u _h ^{\mathrm{tan}} \rVERT ^2 _{\mathrm{tan}, \partial \mathcal{T} _h } &\leq \lVert u - u _h \rVert ^2 _\Omega + h ^2 \bigl\lVert \mathrm{d} ( u - u _h ) \bigr\rVert ^2 _\Omega .
  \end{align*}
\end{proposition}

\begin{proof}
  The first pair of inequalities follows immediately from the fact
  that the scaled tangential trace norm is an infimum, and the second
  pair follows by summing over $ K \in \mathcal{T} _h $.
\end{proof}

Given sufficient elliptic regularity, the estimates of
\citet*{ArFaWi2010} now imply
\begin{align*}
  \lVERT \sigma ^{\mathrm{tan}} - \sigma _h ^{\mathrm{tan}} \rVERT _{\mathrm{tan}, \partial \mathcal{T} _h } 
  &\lesssim
  \begin{cases}
    h ^{ r + 2 } \lVert f \rVert _{ r + 1, \Omega } , & \text{if } V _h ^{ k -1 } = \mathcal{P} _{r+1} \Lambda ^{ k -1 } (\mathcal{T} _h ), \\
    h ^{ r + 1 } \lVert f \rVert _{ r, \Omega } , & \text{if } V _h ^{ k -1 } =
    \mathcal{P} _{r+1} ^-\Lambda ^{ k -1 } (\mathcal{T} _h ),
  \end{cases}\\
  \lVERT u ^{\mathrm{tan}} - u _h ^{\mathrm{tan}} \rVERT _{ \mathrm{tan}, \partial \mathcal{T} _h }
  &\lesssim
    \begin{cases}
      h \lVert f \rVert _\Omega , & \text{if } V _h ^k = \mathcal{P} _1 ^- \Lambda ^k (\mathcal{T} _h ) ,\\
      h ^{ r + 1 } \lVert f \rVert _{ r -1 , \Omega }, & \text{otherwise,}
    \end{cases}
\end{align*}
which is the optimal order allowed by the polynomial degree of the
tangential traces.

We next give estimates for the normal traces, generalizing an argument
of \citet{ArBr1985} for the hybridized RT method. Recall that
$ \widehat{ u } _h ^{\mathrm{nor}} \in ( \widehat{ W } _h ^{ k - 1 ,
  \mathrm{tan} } ) ^\ast $ and
$ \widehat{ \rho } _h ^{\mathrm{nor}} \in ( \widehat{ W } _h ^{ k ,
  \mathrm{tan} } ) ^\ast $, so we compare them to the natural
projections
$ \widehat{ P } _h u ^{\mathrm{nor}} \in ( \widehat{ W } _h ^{ k - 1 ,
  \mathrm{tan} } ) ^\ast $ and
$ \widehat{ P } _h \rho ^{\mathrm{nor}} \in ( \widehat{ W } _h ^{ k ,
  \mathrm{tan} } ) ^\ast $ defined by
\begin{alignat*}{2}
  \langle \widehat{ P } _h u ^{\mathrm{nor}} , \widehat{ \tau } _h
  ^{\mathrm{tan}} \rangle _{ \partial \mathcal{T} _h } &= \langle u
  ^{\mathrm{nor}} , \widehat{ \tau } _h ^{\mathrm{tan}} \rangle _{
    \partial \mathcal{T} _h } ,\quad &\forall \widehat{ \tau } _h &\in
  \widehat{ W } _h ^{
    k -1 , \mathrm{tan} },\\
  \langle \widehat{ P } _h \rho ^{\mathrm{nor}} , \widehat{ v } _h
  ^{\mathrm{tan}} \rangle _{ \partial \mathcal{T} _h } &= \langle \rho
  ^{\mathrm{nor}} , \widehat{ v } _h ^{\mathrm{tan}} \rangle _{
    \partial \mathcal{T} _h } ,\quad &\forall \widehat{ v } _h &\in
  \widehat{ W } _h ^{ k, \mathrm{tan} }.
\end{alignat*}
If we simply identify $ \widehat{ u } _h ^{\mathrm{nor}} $ with the
corresponding element of
$ \widehat{ W } _h ^{ k - 1 , \mathrm{tan} } \subset L ^2 \Lambda ^{ k
  -1 } ( \partial \mathcal{T} _h ) $, we generally \emph{do not}
observe convergence to the unprojected $ u ^{\mathrm{nor}} $, and
likewise for $ \widehat{ \rho } _h ^{\mathrm{nor}} $ and
$ \rho ^{\mathrm{nor}} $. The reason is that the identification of
$ \widehat{ u } _h ^{\mathrm{nor}} $ with an element of
$ L ^2 \Lambda ^{ k -1 } ( \partial \mathcal{T} _h ) $ is only unique
up to the annihilator
$ (\widehat{ W } _h ^{ k -1 , \mathrm{tan} }) ^\perp $. Therefore, we
should really measure the $ L ^2 $ error after quotienting by the
annihilator, which is equivalent to taking the projections above.  We
define the scaled $ L ^2 $ norm
$ \lVERT \cdot \rVERT _{ \partial K } \coloneqq h _K ^{ 1/2 } \lVert
\cdot \rVert _{ \partial K } $ and denote
$ \lVERT \cdot \rVERT _{ \partial \mathcal{T} _h } ^2 \coloneqq \sum
_{ K \in \mathcal{T} _h } \lVERT \cdot \rVERT _{ \partial K } ^2 $.

\begin{theorem}
  \label{thm:nor_estimates}
  For each $ K \in \mathcal{T} _h $, we have
  \begin{align*}
    \lVERT \widehat{ P } _h u ^{\mathrm{nor}} - \widehat{ u } _h ^{\mathrm{nor}} \rVERT _{ \partial  K } &\lesssim \lVert  P _h u - u _h \rVert _K + h _K \lVert \sigma - \sigma _h \rVert _K,\\
    \lVERT \widehat{ P } _h \rho ^{\mathrm{nor}} - \widehat{ \rho } _h ^{\mathrm{nor}}  \rVERT _{ \partial K } &\lesssim \bigl\lVert P _h \mathrm{d} ( u - u _h ) \bigr\rVert _K + h _K \Bigl( \bigl\lVert \mathrm{d} ( \sigma - \sigma _h ) \bigr\rVert _K + \lVert p - p _h \rVert _K \Bigr) ,
  \end{align*}
  where $ P _h $ denotes $ L ^2 $ projection onto $ W _h $. Consequently,
  \begin{align*} 
    \lVERT \widehat{ P } _h u ^{\mathrm{nor}} - \widehat{ u } _h ^{\mathrm{nor}} \rVERT _{ \partial  \mathcal{T} _h } &\lesssim \lVert P _h u - u _h \rVert _{\mathcal{T} _h} + h \lVert \sigma - \sigma _h \rVert _\Omega ,\\
 \lVERT \widehat{ P } _h \rho  ^{\mathrm{nor}} - \widehat{ \rho } _h ^{\mathrm{nor}} \rVERT _{ \partial  \mathcal{T} _h  } &\lesssim \bigl\lVert P _h \mathrm{d} ( u - u _h ) \bigr\rVert _{ \mathcal{T} _h } + h \Bigl( \bigl\lVert \mathrm{d} ( \sigma - \sigma _h ) \bigr\rVert _\Omega + \lVert p - p _h \rVert _\Omega  \Bigr) .
  \end{align*} 
\end{theorem}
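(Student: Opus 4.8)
The plan is to follow the strategy of \citet{ArBr1985} for the hybridized RT method: on each $ K \in \mathcal{T} _h $, derive a local ``error equation'' by subtracting the discrete local equations from the identities satisfied by the exact solution, then extract the bound by testing against a finite element function whose tangential trace equals the error in the normal trace.

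First I would record the local equations. Since $ \overline{ p } _h = 0 $ by \cref{thm:feec-h}, taking test functions $ \tau _h \in W _h ^{ k -1 } (K) $ and $ v _h \in W _h ^k (K) $ (extended by zero) in \eqref{eqn:feec-h_tau}--\eqref{eqn:feec-h_v} gives, for each $K$,
\begin{align*}
  ( \sigma _h , \tau _h ) _K - ( u _h , \mathrm{d} \tau _h ) _K + \langle \widehat{ u } _h ^{\mathrm{nor}} , \tau _h ^{\mathrm{tan}} \rangle _{ \partial K } &= 0 , \\
  (\mathrm{d} \sigma _h , v _h ) _K + ( \mathrm{d} u _h , \mathrm{d} v _h ) _K + ( p _h , v _h ) _K - \langle \widehat{ \rho } _h ^{\mathrm{nor}} , v _h ^{\mathrm{tan}} \rangle _{ \partial K } &= ( f , v _h ) _K .
\end{align*}
On the other hand, the exact solution satisfies $ \sigma = \delta u $ and $ f = \mathrm{d} \sigma + \delta \mathrm{d} u + p $ pointwise on $K$, so applying the integration by parts formula \eqref{eqn:ibp} on $K$ produces the same two identities with $ \sigma _h , u _h , p _h $ replaced by $ \sigma , u , p $ and the hybrid normal traces replaced by the genuine traces $ u ^{\mathrm{nor}} $ and $ \rho ^{\mathrm{nor}} $. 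Subtracting the discrete identities from the exact ones, and using that $ \tau _h ^{\mathrm{tan}} \in \widehat{ W } _h ^{ k -1, \mathrm{tan} } $ and $ v _h ^{\mathrm{tan}} \in \widehat{ W } _h ^{ k, \mathrm{tan} } $ together with the defining property of $ \widehat{ P } _h $, I obtain the error equations
\begin{align*}
  \langle \widehat{ P } _h u ^{\mathrm{nor}} - \widehat{ u } _h ^{\mathrm{nor}} , \tau _h ^{\mathrm{tan}} \rangle _{ \partial K } &= ( u - u _h , \mathrm{d} \tau _h ) _K - ( \sigma - \sigma _h , \tau _h ) _K , \\
  \langle \widehat{ P } _h \rho ^{\mathrm{nor}} - \widehat{ \rho } _h ^{\mathrm{nor}} , v _h ^{\mathrm{tan}} \rangle _{ \partial K } &= ( \mathrm{d} ( \sigma - \sigma _h ) , v _h ) _K + ( \mathrm{d} ( u - u _h ) , \mathrm{d} v _h ) _K + ( p - p _h , v _h ) _K ,
\end{align*}
valid for all $ \tau _h \in W _h ^{ k -1 } (K) $ and $ v _h \in W _h ^k (K) $, respectively. (The right-hand sides only pair $ u ^{\mathrm{nor}} $, $ \rho ^{\mathrm{nor}} $ against tangential traces from $ \widehat{ W } _h $, so they are honest $ L ^2 $ pairings with $ \widehat{ P } _h u ^{\mathrm{nor}} $, $ \widehat{ P } _h \rho ^{\mathrm{nor}} $, and no regularity beyond what is assumed is needed.)

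For the first estimate I would take $ \tau _h = \psi _h \in W _h ^{ k -1 } (K) $ to be the lift of $ \widehat{ P } _h u ^{\mathrm{nor}} - \widehat{ u } _h ^{\mathrm{nor}} $ of minimal $ L ^2 (K) $-norm --- i.e., the representative $ L ^2 (K) $-orthogonal to the kernel $ \mathring{ W } _h ^{ k -1 } (K) $ of the tangential trace map. A standard scaling argument (equivalence of norms on the reference simplex, together with the affine scaling of the $ L ^2 (K) $ and $ L ^2 ( \partial K ) $ norms of $ (k-1) $-forms, which differ by a factor of $ h _K ^{ 1/2 } $) gives $ \lVert \psi _h \rVert _K \lesssim h _K ^{ 1/2 } \lVert \psi _h ^{\mathrm{tan}} \rVert _{ \partial K } $, and hence $ \lVert \mathrm{d} \psi _h \rVert _K \lesssim h _K ^{-1} \lVert \psi _h \rVert _K \lesssim h _K ^{ -1/2 } \lVert \psi _h ^{\mathrm{tan}} \rVert _{ \partial K } $ by the inverse inequality. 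With this choice the left-hand side of the first error equation equals $ \lVert \widehat{ P } _h u ^{\mathrm{nor}} - \widehat{ u } _h ^{\mathrm{nor}} \rVert _{ \partial K } ^2 $; on the right-hand side, since $ \mathrm{d} \psi _h \in W _h ^k (K) $ and $ u _h \in W _h ^k (K) $, I may replace $ u $ by $ P _h u $ in $ ( u - u _h , \mathrm{d} \psi _h ) _K $, and Cauchy--Schwarz with the bounds above yields
\begin{equation*}
  \lVert \widehat{ P } _h u ^{\mathrm{nor}} - \widehat{ u } _h ^{\mathrm{nor}} \rVert _{ \partial K } ^2 \lesssim \bigl( h _K ^{ -1/2 } \lVert P _h u - u _h \rVert _K + h _K ^{ 1/2 } \lVert \sigma - \sigma _h \rVert _K \bigr) \lVert \widehat{ P } _h u ^{\mathrm{nor}} - \widehat{ u } _h ^{\mathrm{nor}} \rVert _{ \partial K } .
\end{equation*}
Dividing through and multiplying by $ h _K ^{ 1/2 } $ gives the first bound. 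The second is obtained identically with $ v _h = \phi _h \in W _h ^k (K) $ the minimal-norm lift of $ \widehat{ P } _h \rho ^{\mathrm{nor}} - \widehat{ \rho } _h ^{\mathrm{nor}} $, using $ \mathrm{d} \phi _h \in W _h ^{ k + 1 } (K) $ to replace $ \mathrm{d} u $ by $ P _h \mathrm{d} u $ in $ ( \mathrm{d} ( u - u _h ) , \mathrm{d} \phi _h ) _K $ and applying Cauchy--Schwarz to the remaining two terms. The global $ \lVERT \cdot \rVERT _{ \partial \mathcal{T} _h } $ estimates then follow by squaring the local ones, summing over $ K \in \mathcal{T} _h $, using $ h _K \leq h $, and taking square roots.

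The hard part will be the bounded lift: one must select a specific preimage of a prescribed tangential trace under the non-injective map $ W _h ^{ k -1 } (K) \to \widehat{ W } _h ^{ k -1, \mathrm{tan} } \rvert _{ \partial K } $ and control its $ L ^2 (K) $-norm by $ h _K ^{ 1/2 } $ times the $ L ^2 ( \partial K ) $-norm of the trace. This is exactly what the minimal-norm representative achieves, via norm equivalence on the finite-dimensional reference space and the affine scaling for $ (k-1) $- and $ k $-forms; shape regularity keeps the constants uniform across the mesh. Everything else --- the integration by parts identities, the passage from $ u ^{\mathrm{nor}} $ to $ \widehat{ P } _h u ^{\mathrm{nor}} $, and the Cauchy--Schwarz/inverse-estimate bookkeeping --- is routine.
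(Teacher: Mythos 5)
Your proposal is correct and follows essentially the same route as the paper: the same local error equations obtained by subtracting \eqref{eqn:feec-h_tau} from \eqref{eqn:dd_tau} (and likewise for the $v$-equations), the same bounded extension with $\lVert \tau _h \rVert _K + h _K \lVert \mathrm{d} \tau _h \rVert _K \lesssim \lVERT \widehat{ \tau } _h ^{\mathrm{tan}} \rVERT _{ \partial K }$ via scaling, and the same substitution of $P _h u$ using $\mathrm{d} \tau _h \in W _h ^k (K)$. The only cosmetic difference is that the paper concludes via the dual-norm supremum over all $\widehat{ \tau } _h ^{\mathrm{tan}}$, whereas you test against the particular minimal-norm lift of the $L^2$ representative of the error; these are the same argument.
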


\begin{proof}
  A scaling argument shows that each
  $ \widehat{ \tau } _h ^{\mathrm{tan}} \in \widehat{ W } _h ^{ k -1 ,
    \mathrm{tan} } ( \partial K ) $ has an extension
  $ \tau _h \in W _h ^{ k -1 } (K) $ with
  $ \tau _h ^{\mathrm{tan}} = \widehat{ \tau } _h ^{\mathrm{tan}} $
  such that
  \begin{equation*}
    \lVert \tau _h \rVert _K + h _K \lVert \mathrm{d} \tau _h \rVert _K \lesssim \lVERT \widehat{ \tau } _h ^{\mathrm{tan}} \rVERT _{ \partial K } .
  \end{equation*}
  Therefore, subtracting \eqref{eqn:feec-h_tau} from
  \eqref{eqn:dd_tau}, we get
  \begin{align*}
    h _K \langle \widehat{ P } _h u ^{\mathrm{nor}} - \widehat{ u } _h ^{\mathrm{nor}} , \widehat{ \tau } _h ^{\mathrm{tan}} \rangle _{ \partial K }
    &= h _K \langle u ^{\mathrm{nor}} - \widehat{ u } _h ^{\mathrm{nor}} , \tau _h ^{\mathrm{tan}} \rangle _{ \partial K }\\
    &= h _K \Bigl[ - ( \sigma - \sigma _h , \tau _h ) _K  + ( u - u _h , \mathrm{d} \tau _h ) _K \Bigr] \\
    &= h _K \Bigl[ - ( \sigma - \sigma _h , \tau _h ) _K  + (  P _h u - u _h , \mathrm{d} \tau _h )  _K \Bigr] \\
    &\leq \Bigl( h _K \lVert \sigma - \sigma _h \rVert _K + \lVert P _h  u - u _h \rVert _K \Bigr) \Bigl( \lVert \tau _h \rVert _K + h _K \lVert \mathrm{d} \tau _h \rVert _K \Bigr) \\
    &\lesssim \Bigl( h _K \lVert \sigma - \sigma _h \rVert _K + \lVert P  _h u - u _h \rVert _K \Bigr)  \lVERT \widehat{ \tau } _h ^{\mathrm{tan}} \rVERT _{ \partial K } .
  \end{align*}
  Since $ \langle \cdot , \cdot \rangle _{ \partial K } $ agrees with
  the $ L ^2 $ inner product,
  \begin{equation*}
    \lVERT \widehat{ P } _h u ^{\mathrm{nor}} - \widehat{ u } _h ^{\mathrm{nor}} \rVERT _{ \partial  K } = h _K ^{ 1/2 } \sup _{ \widehat{ \tau } ^{\mathrm{tan}} _h \neq 0 } \frac{ \langle \widehat{ P } _h u ^{\mathrm{nor}} - \widehat{ u } _h ^{\mathrm{nor}}, \widehat{ \tau } ^{\mathrm{tan}} _h \rangle _{ \partial K } }{ \lVert \widehat{ \tau } _h ^{\mathrm{tan}} \rVert _{ \partial K } } = \sup _{ \widehat{ \tau } _h ^{\mathrm{tan}} \neq 0 } \frac{ h _K \langle \widehat{ P } _h u ^{\mathrm{nor}} - \widehat{ u } _h ^{\mathrm{nor}}, \widehat{ \tau } ^{\mathrm{tan}} _h \rangle _{ \partial K } }{ \lVERT \widehat{ \tau } _h ^{\mathrm{tan}} \rVERT _{ \partial K } },
  \end{equation*}
  which completes the proof of the first estimate. The estimate for
  $ \lVERT \widehat{ P } _h \rho ^{\mathrm{nor}} - \widehat{ \rho }
  _h ^{\mathrm{nor}} \rVERT _{ \partial K } $ is obtained similarly,
  and the $ \lVERT \cdot \rVERT _{ \partial \mathcal{T} _h } $
  estimates again follow immediately from the
  $ \lVERT \cdot \rVERT _{ \partial K } $ estimates.
\end{proof}

For $ k < n $, we generally cannot improve on
$ \lVert P _h u - u _h \rVert _{\mathcal{T} _h} \leq \lVert u - u _h
\rVert _\Omega $, so assuming sufficient elliptic regularity and
applying the estimates from \citet*{ArFaWi2010} gives
\begin{equation*}
  \lVERT \widehat{ P } _h u ^{\mathrm{nor}} - \widehat{ u } _h ^{\mathrm{nor}} \rVERT _{ \partial  \mathcal{T} _h } \lesssim \begin{cases}
    h \lVert f \rVert _\Omega , & \text{if } V _h ^k = \mathcal{P} _1 ^- \Lambda ^k (\mathcal{T} _h ) ,\\
    h ^{ r + 1 } \lVert f \rVert _{ r -1 , \Omega }, & \text{otherwise,}
  \end{cases}
\end{equation*}
i.e., the convergence rate is the same as that for
$ u _h \rightarrow u $. When $ k = n $, however,
$ \lVert P _h u - u _h \rVert _{\mathcal{T} _h} $ famously
superconverges for the RT and BDM methods
(\citet*{DoRo1985,ArBr1985,BrDoMa1985}). In this case, we recover the
superconvergence results of \citep{ArBr1985,BrDoMa1985} for the
Lagrange multipliers:
\begin{equation*}
  \lVERT \widehat{ P } _h u ^{\mathrm{nor}} - \widehat{ u } _h ^{\mathrm{nor}} \rVERT _{ \partial  \mathcal{T} _h } \lesssim
  \begin{cases}
    h ^2 \lVert f \rVert _{ 1, \Omega } , & \text{if }  r = 0 ,\\
    h ^{r + 3} \lVert f \rVert _{ r + 1 , \Omega } ,& \text{if } r \geq 1,\ V _h ^{ n -1 } = \mathcal{P} _{ r + 1 } \Lambda ^{ n -1 } ( \mathcal{T} _h )  ,\\
    h ^{ r + 2 } \lVert f \rVert _{ r, \Omega } ,& \text{if } r \geq 1,\ V _h ^{ n -1 } = \mathcal{P} _{ r + 1 } ^- \Lambda ^{ n -1 } ( \mathcal{T} _h )  .
  \end{cases}
\end{equation*}
From the perspective of FEEC, this occurs since
$ W _h ^n = V _h ^n = \mathfrak{B} _h ^n $, so
$ \lVert P _h u - u _h \rVert _{ \mathcal{T} _h } = \bigl\lVert P _{
  \mathfrak{B} _h } ( u - u _h ) \bigr\rVert _\Omega $, which
superconverges according to \citep[Lemma 3.13]{ArFaWi2010}. On the
other hand, when $ k < n $, the error is dominated by the
nonvanishing $ \mathfrak{Z} _h ^{ k \perp } $ component \citep[Lemma
3.16]{ArFaWi2010}, so there is no improvement.

Similarly, when $ k < n - 1 $, we generally cannot do better than
$ \bigl\lVert P _h \mathrm{d} ( u - u _h ) \bigr\rVert _{ \mathcal{T}
  _h } \leq \bigl\lVert \mathrm{d} ( u - u _h ) \bigr\rVert _\Omega $,
so assuming sufficient elliptic regularity,
\begin{equation*}
  \lVERT \widehat{ P } _h \rho  ^{\mathrm{nor}} - \widehat{ \rho } _h ^{\mathrm{nor}} \rVERT _{ \partial  \mathcal{T} _h  } \lesssim \begin{cases}
    h ^{r+1} \lVert f \rVert _{r, \Omega} , & \text{if } V _h ^k = \mathcal{P} _{r+1} ^- \Lambda ^k (\mathcal{T} _h ) ,\\
    h ^r \lVert f \rVert _{ r -1 , \Omega }, & \text{if } V _h ^k = \mathcal{P} _r \Lambda ^k (\mathcal{T} _h ) ,
  \end{cases}
\end{equation*}
and the convergence rate is the same as that for
$ \mathrm{d} u _h \rightarrow \mathrm{d} u $.  However, when
$ k = n - 1 $, we obtain superconvergence as a consequence of the
following lemma (which holds for all $k$, not just $ k = n -1 $).

\begin{lemma}
  \label{lem:Bh}
  The FEEC solution \eqref{eqn:feec} satisfies
  $ \bigl\lVert P _{ \mathfrak{B} _h } \mathrm{d} (u - u _h )
  \bigr\rVert _\Omega \lesssim h \Bigl( \bigl\lVert \mathrm{d} (
  \sigma - \sigma _h ) \bigr\rVert _\Omega + \lVert p - p _h \rVert
  _\Omega \Bigr) $.
\end{lemma}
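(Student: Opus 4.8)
The plan is to express the quantity of interest through a single potential in $\mathfrak{Z}_h^{k\perp}$ and then exploit the $L^2$-orthogonality of the Hodge decomposition. Since $P_{\mathfrak{B}_h}\mathrm{d}(u-u_h)\in\mathfrak{B}_h^{k+1}=\mathrm{d}V_h^k$ and $\mathrm{d}$ is a bijection from $V_h^k\cap\mathfrak{Z}_h^{k\perp}$ onto $\mathfrak{B}_h^{k+1}$, I would write $P_{\mathfrak{B}_h}\mathrm{d}(u-u_h)=\mathrm{d}w_h$ with $w_h\in\mathfrak{Z}_h^{k\perp}$ unique, so that the discrete Poincar\'e inequality gives $\lVert w_h\rVert_\Omega\lesssim\lVert\mathrm{d}w_h\rVert_\Omega$. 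As $\mathrm{d}w_h\in\mathfrak{B}_h^{k+1}$, orthogonality of the projection gives $\lVert\mathrm{d}w_h\rVert_\Omega^2=(\mathrm{d}w_h,\mathrm{d}(u-u_h))_\Omega$. Testing \eqref{eqn:feec_v} against $v_h=w_h$ and \eqref{eqn:hodge-laplace_v} against $v=w_h$ and subtracting yields the error identity $\lVert\mathrm{d}w_h\rVert_\Omega^2=-\bigl(\mathrm{d}(\sigma-\sigma_h),w_h\bigr)_\Omega-(p-p_h,w_h)_\Omega$.

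Next I would introduce the continuous potential $z\coloneqq P_{\mathfrak{Z}^{k\perp}}w_h$, so that $w_h-z=P_{\mathfrak{Z}^k}w_h$ is closed and $\mathrm{d}z=\mathrm{d}w_h$. Both terms on the right of the error identity are $L^2$-orthogonal to $z$: indeed $\mathrm{d}(\sigma-\sigma_h)\in\mathfrak{B}^k\subset\mathfrak{Z}^k$ since $\sigma,\sigma_h\in H\Lambda^{k-1}(\Omega)$, and $p-p_h\in\mathfrak{Z}^k$ since $p\in\mathfrak{H}^k$ and $p_h\in\mathfrak{H}_h^k\subset\mathfrak{Z}_h^k\subset\mathfrak{Z}^k$ by the subcomplex property, while $z\in\mathfrak{Z}^{k\perp}$. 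Hence the identity becomes $\lVert\mathrm{d}w_h\rVert_\Omega^2=-\bigl(\mathrm{d}(\sigma-\sigma_h)+(p-p_h),w_h-z\bigr)_\Omega\le\bigl(\lVert\mathrm{d}(\sigma-\sigma_h)\rVert_\Omega+\lVert p-p_h\rVert_\Omega\bigr)\lVert w_h-z\rVert_\Omega$, and it remains to show $\lVert w_h-z\rVert_\Omega\lesssim h\lVert\mathrm{d}w_h\rVert_\Omega$.

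For this I would compare $w_h$ with $\pi_h z$, where $\pi_h$ is the bounded commuting cochain projection. Since $\mathrm{d}w_h\in V_h^{k+1}$, commutativity gives $\mathrm{d}\pi_h z=\pi_h\mathrm{d}z=\pi_h\mathrm{d}w_h=\mathrm{d}w_h$, so $\pi_h z-w_h\in\mathfrak{Z}_h^k$; as $w_h\in\mathfrak{Z}_h^{k\perp}$, this means $w_h=P_{\mathfrak{Z}_h^\perp}\pi_h z$ and thus $\lVert w_h-\pi_h z\rVert_\Omega=\lVert P_{\mathfrak{Z}_h}\pi_h z\rVert_\Omega=\lVert P_{\mathfrak{Z}_h}(\pi_h z-z)\rVert_\Omega\le\lVert\pi_h z-z\rVert_\Omega$, using $P_{\mathfrak{Z}_h}z=0$ because $z\in\mathfrak{Z}^{k\perp}$ and $\mathfrak{Z}_h^k\subset\mathfrak{Z}^k$. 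Therefore $\lVert w_h-z\rVert_\Omega\le 2\lVert z-\pi_h z\rVert_\Omega$. Since $z$ is coexact, it lies in $H\Lambda^k(\Omega)\cap\mathring{H}^\ast\Lambda^k(\Omega)$ with $\mathrm{d}z=\mathrm{d}w_h$ and $\delta z=0$, so the elliptic regularity assumed in this section (together with the Poincar\'e inequality) gives $z\in H^1\Lambda^k(\Omega)$ with $\lvert z\rvert_{1,\Omega}\lesssim\lVert\mathrm{d}w_h\rVert_\Omega$; the first-order approximation estimate for $\pi_h$ then yields $\lVert z-\pi_h z\rVert_\Omega\lesssim h\lvert z\rvert_{1,\Omega}\lesssim h\lVert\mathrm{d}w_h\rVert_\Omega$. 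Combining everything and cancelling one factor of $\lVert\mathrm{d}w_h\rVert_\Omega$ completes the proof. The main obstacle is precisely this last regularity step: all the earlier manipulations are purely algebraic consequences of the Hodge decomposition and the Galerkin equations, but the gain of a full power of $h$ genuinely requires the $H^1$-regularity of the auxiliary potential $z$, which is exactly where the elliptic regularity hypothesis enters.
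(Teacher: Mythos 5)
Your argument is correct and is essentially the paper's own proof: the paper takes the same discrete potential $v_h\in\mathfrak{Z}_h^{k\perp}$ with $\mathrm{d}v_h=P_{\mathfrak{B}_h}\mathrm{d}(u-u_h)$ and the same continuous comparison $v=P_{\mathfrak{Z}^\perp}v_h$, and arrives at the identical error identity and Cauchy--Schwarz bound. The only difference is that the paper simply cites \citep[Lemma 3.12]{ArFaWi2010} for the estimate $\lVert v-v_h\rVert_\Omega\lesssim h\lVert\mathrm{d}v_h\rVert_\Omega$, which is precisely the sub-lemma you re-derive (correctly) from the commuting projection and the $H^1$-regularity of the coexact potential.
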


\begin{proof}
  The argument is similar to \citep[Lemma 3.15]{ArFaWi2010}.  Let
  $ v _h \in \mathfrak{Z} _h ^{k \perp} $ be such that
  $ \mathrm{d} v _h = P _{ \mathfrak{B} _h } \mathrm{d} ( u - u _h )
  $, and take $v = P _{ \mathfrak{Z} ^\perp } v _h $. Since $v$ is
  orthogonal to $ \mathrm{d} ( \sigma - \sigma _h ) $ and
  $ p - p _h $, subtracting \eqref{eqn:feec_v} from
  \eqref{eqn:hodge-laplace_v} gives
  \begin{align*}
    \bigl\lVert P _{ \mathfrak{B} _h } \mathrm{d} (u - u _h )
    \bigr\rVert _\Omega ^2
    &= \bigl( \mathrm{d} ( u - u _h ) , \mathrm{d} v _h \bigr) _\Omega \\
    &= \bigl( \mathrm{d} ( \sigma - \sigma _h ) + ( p - p _h ) , v - v _h \bigr) _\Omega \\
    &\leq \Bigl( \bigl\lVert \mathrm{d} ( \sigma - \sigma _h ) \bigr\rVert _\Omega + \lVert p - p _h \rVert _\Omega \Bigr) \lVert v - v _h \rVert _\Omega \\
    &\lesssim h \Bigl( \bigl\lVert \mathrm{d} ( \sigma - \sigma _h ) \bigr\rVert _\Omega + \lVert p - p _h \rVert _\Omega \Bigr) \bigl\lVert P _{ \mathfrak{B} _h } \mathrm{d} (u - u _h ) \bigr\rVert _\Omega .
  \end{align*}
  The last step uses \citep[Lemma 3.12]{ArFaWi2010}, which says that
  $ \lVert v - v _h \rVert _\Omega \lesssim h \lVert \mathrm{d} v _h
  \rVert _\Omega $.
\end{proof}

\begin{corollary}
  \label{cor:rhonor_superconvergence}
  For $ k = n - 1 $, we have the improved estimate
  \begin{equation*}
    \lVERT \widehat{ P } _h \rho  ^{\mathrm{nor}} - \widehat{ \rho } _h ^{\mathrm{nor}} \rVERT _{ \partial  \mathcal{T} _h  } \lesssim h \Bigl( \bigl\lVert \mathrm{d} ( \sigma - \sigma _h ) \bigr\rVert _\Omega + \lVert p - p _h \rVert _\Omega  \Bigr) .
  \end{equation*}
  In particular, when
  $ f \in \mathring{ \mathfrak{B} } _{ n -1 } ^\ast $, we have
  $ \widehat{ \rho } _h ^{\mathrm{nor}} = \widehat{ P } _h \rho
  ^{\mathrm{nor}} $ exactly.
\end{corollary}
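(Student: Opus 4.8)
The plan is to reduce the claimed estimate to \cref{thm:nor_estimates} together with \cref{lem:Bh}. \cref{thm:nor_estimates} already gives
\begin{equation*}
  \lVERT \widehat{ P } _h \rho ^{\mathrm{nor}} - \widehat{ \rho } _h ^{\mathrm{nor}} \rVERT _{ \partial \mathcal{T} _h } \lesssim \bigl\lVert P _h \mathrm{d} ( u - u _h ) \bigr\rVert _{ \mathcal{T} _h } + h \Bigl( \bigl\lVert \mathrm{d} ( \sigma - \sigma _h ) \bigr\rVert _\Omega + \lVert p - p _h \rVert _\Omega \Bigr) ,
\end{equation*}
so the only thing to do is to show that, when $ k = n - 1 $, the first term on the right is itself $ O(h) $ times the second. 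The key structural observation is that $ \mathrm{d} ( u - u _h ) $ is an $n$-form, and in top degree the finite element spaces in \eqref{eqn:stable_pairs} carry no interelement continuity, so $ W _h ^n = V _h ^n $; moreover $ \mathrm{d} \colon V _h ^{ n -1 } \to V _h ^n $ is surjective for these complexes, so $ \mathfrak{B} _h ^n = V _h ^n = W _h ^n $. Consequently the $ L ^2 $ projection $ P _h $ onto $ W _h $ acts on $n$-forms as the projection $ P _{ \mathfrak{B} _h } $, and
\begin{equation*}
  \bigl\lVert P _h \mathrm{d} ( u - u _h ) \bigr\rVert _{ \mathcal{T} _h } = \bigl\lVert P _{ \mathfrak{B} _h } \mathrm{d} ( u - u _h ) \bigr\rVert _\Omega .
\end{equation*}

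Applying \cref{lem:Bh} to the right-hand side bounds it by $ h \bigl( \lVert \mathrm{d} ( \sigma - \sigma _h ) \rVert _\Omega + \lVert p - p _h \rVert _\Omega \bigr) $, and substituting back into the \cref{thm:nor_estimates} estimate absorbs the first term into the second, which is exactly the asserted superconvergence bound. For the final statement, suppose $ f \in \mathring{ \mathfrak{B} } _{ n -1 } ^\ast = \mathfrak{Z} ^{ ( n -1 ) \perp } $. The exact solution furnishes the Hodge decomposition $ f = \mathrm{d} \sigma + p + \delta \rho $ (with $ \rho = \mathrm{d} u $), whose three summands lie in the mutually orthogonal subspaces $ \mathfrak{B} ^{ n -1 } $, $ \mathfrak{H} ^{ n -1 } $, and $ \mathfrak{Z} ^{ ( n -1 ) \perp } $; since $ f $ lies in the last of these, uniqueness forces $ \mathrm{d} \sigma = 0 $ and $ p = 0 $. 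The $ L ^2 $ error estimates recalled in \cref{sec:feec} then give $ \lVert \mathrm{d} ( \sigma - \sigma _h ) \rVert _\Omega \lesssim E ( \mathrm{d} \sigma ) = 0 $ and $ \lVert p - p _h \rVert _\Omega \lesssim E ( p ) + h ^{ r + 1 } E ( \mathrm{d} \sigma ) = 0 $, so $ \mathrm{d} \sigma _h = 0 $ and $ p _h = 0 $ as well. The right-hand side of the superconvergence estimate therefore vanishes, giving $ \widehat{ \rho } _h ^{\mathrm{nor}} = \widehat{ P } _h \rho ^{\mathrm{nor}} $ exactly.

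The one point requiring a little care is the identification $ \mathfrak{B} _h ^n = W _h ^n $ — that is, exactness of the discrete de~Rham complex in top degree — but this is a standard property of the $ \mathcal{P} _r ^\pm \Lambda $ subcomplexes (the same fact underlies the $ k = n $ superconvergence of $ \widehat{ u } _h ^{\mathrm{nor}} $ discussed above), so no real obstacle arises; the rest is direct substitution into \cref{thm:nor_estimates,lem:Bh}.
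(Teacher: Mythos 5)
Your proof is correct and follows essentially the same route as the paper's: the key step in both is the identification $\lVert P_h \mathrm{d}(u-u_h)\rVert_{\mathcal{T}_h} = \lVert P_{\mathfrak{B}_h}\mathrm{d}(u-u_h)\rVert_\Omega$ for $k=n-1$ (valid since $W_h^n = V_h^n = \mathfrak{B}_h^n$), after which \cref{thm:nor_estimates} and \cref{lem:Bh} give the estimate, and the vanishing of $\mathrm{d}\sigma$, $p$ and their discrete counterparts when $f\in\mathring{\mathfrak{B}}^\ast_{n-1}$ gives the exact equality. Your extra care in justifying $\mathrm{d}\sigma_h = 0$ and $p_h = 0$ via the $L^2$ error estimates fills in a detail the paper leaves implicit, but the argument is the same.
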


\begin{proof}
  Since
  $ \bigl\lVert P _h \mathrm{d} ( u - u _h ) \bigr\rVert _{
    \mathcal{T} _h } = \bigl\lVert P _{ \mathfrak{B} _h } \mathrm{d} (
  u - u _h ) \bigr\rVert _\Omega $ when $ k = n -1 $, the improved
  estimate is immediate from \cref{thm:nor_estimates} and
  \cref{lem:Bh}. In particular, $\sigma$ and $p$ vanish when
  $ f \in \mathring{ \mathfrak{B} } _{ n -1 } ^\ast $, so in that case
  the left-hand side is identically zero.
\end{proof}

Assuming sufficient elliptic regularity, this gives the superconvergent rates
\begin{equation*}
  \lVERT \widehat{ P } _h \rho  ^{\mathrm{nor}} - \widehat{ \rho } _h ^{\mathrm{nor}} \rVERT _{ \partial  \mathcal{T} _h  } \lesssim \begin{cases}
    0 , & \text{if } f \in \mathring{ \mathfrak{B}  } _{ n -1 } ^\ast ,\\
    h ^{ r + 2 } \lVert f \rVert _{ r + 1 , \Omega } , & \text{otherwise.}
  \end{cases}
\end{equation*}

\section{Postprocessing}
\label{sec:postprocessing}

In this section, we introduce a local postprocessing procedure, which
generalizes that of \citet{Stenberg1991} from $ k = n $ to arbitrary
$k$. We develop new error estimates for the postprocessed solution
when $ k < n $; in particular, postprocessing gives a superconvergent
approximation $ \rho _h ^\ast $ to $ \mathrm{d} u $ for $ k = n -1 $,
and $ \delta \rho _h ^\ast $ is an improved approximation to
$ \delta \mathrm{d} u $ for all $k$. Finally, we discuss how this
analysis corresponds to that of \citet{Stenberg1991} in the case
$ k = n $, giving superconvergence of $ u _h ^\ast $ to $u$.

\subsection{The postprocessing procedure}

To motivate the proposed procedure, recall that the exact local solver
\eqref{eqn:local_solver} corresponds to solving
$ L u + \overline{p} = f - p $ such that
$ P _{ \overline{ \mathfrak{H} } } u = \overline{u} $, with tangential
boundary conditions given by $ \widehat{ \sigma } ^{\mathrm{tan}} $
and $ \widehat{ u } ^{\mathrm{tan}} $. Instead of writing this as a
variational problem on the $ \mathring{ H } \Lambda (K) $ complex, we
can equivalently write it on the $ H ^\ast \Lambda (K) $ complex as
\begin{subequations}
  \label{eqn:pp_exact}
\begin{alignat}{2}
  ( \rho, \eta ) _K - ( u , \delta \eta ) _K &= \langle \widehat{ u } ^{\mathrm{tan}} , \eta ^{\mathrm{nor}} \rangle _{ \partial K } , \quad &\forall \eta &\in H ^\ast \Lambda ^{ k + 1 } (K) ,\\
  ( \delta \rho , v ) _K + ( \delta u , \delta v ) _K + ( \overline{p}, v ) _K &= ( f - p,  v ) _K - \langle \widehat{ \sigma } ^{\mathrm{tan}} , v ^{\mathrm{nor}} \rangle _{ \partial K } , \quad &\forall v &\in H ^\ast \Lambda ^k (K) ,\\
  ( u, \overline{q} ) _K &= ( \overline{ u }, \overline{q} ) _K , \quad &\forall \overline{q} &\in \mathring{ \mathfrak{H}  } ^k (K) ,
\end{alignat}
\end{subequations}
where the tangential boundary conditions are now natural rather than
essential. As before, we have $ \sigma = \delta u $ and
$ \rho = \mathrm{d} u $

The postprocessing procedure is based on approximating
\eqref{eqn:pp_exact} on a finite-dimensional subcomplex
$ W _h ^\ast (K) \subset H ^\ast \Lambda (K) $, meaning
$ \delta W _h ^{\ast k+1} (K) \subset W _h ^{\ast k} (K) $. Since
$ \star H ^\ast \Lambda ^k (K) = H \Lambda ^{ n - k } (K) $, an
equivalent condition is that
$ \star W _h ^\ast (K) \subset H \Lambda (K) $ is a subcomplex.
Moreover,
$ \pi _h \colon H \Lambda (K) \rightarrow \star W _h ^\ast (K) $ is a
bounded commuting projection if and only if
$ \star ^{-1} \pi _h \star \colon H ^\ast \Lambda (K) \rightarrow W _h
^\ast (K) $ is. For a simplicial mesh, we may therefore take
\begin{equation*}
  \star W _h ^{ \ast k + 1 } (K) = \mathcal{P} _{ r ^\ast + 1 } ^\pm \Lambda ^{ n - k -1 } (K) , \qquad  \star W _h ^{ \ast k } (K) =  \begin{Bmatrix}
    \renewcommand\arraystretch{2} \mathcal{P} _{r ^\ast} \Lambda ^{n-k} ( K ) \text{ (if
      $ r ^\ast \geq 1 $)} \\[0.5ex]
    \text{or}\\[0.5ex]
    \mathcal{P} _{ r ^\ast + 1 } ^- \Lambda ^{n-k} ( K )
  \end{Bmatrix}.
\end{equation*}
This is just the Hodge dual of the stable pairs
\eqref{eqn:stable_pairs} with $k$ replaced by $ n - k $ and $r$ by
$ r ^\ast $, so all of the results of \citet*{ArFaWi2010} apply
immediately to the dual problem. We write the discrete Hodge
decomposition for this complex as
\begin{equation*}
  W _h ^{ \ast k } (K) = \mathfrak{B}  _h ^{ \ast k } (K) \oplus \mathfrak{H} _h ^{\ast k} (K) \oplus \mathfrak{Z}  _h ^{ \ast k \perp } (K) .
\end{equation*} 
When $K$ is contractible (e.g., a simplex), we have
$ \mathfrak{H} _h ^{\ast k} (K) = \mathring{ \mathfrak{H} } ^k (K)
$, which is $ \cong \mathbb{R} $ for $ k = n $ and trivial otherwise.

We are now ready to define the postprocessing procedure on
$ K \in \mathcal{T} _h $: Find
$ \rho _h ^\ast \in W _h ^{ \ast k + 1 } (K) $,
$ u _h ^\ast \in W _h ^{ \ast k } (K) $,
$ \overline{p} _h ^\ast \in \mathfrak{H} _h ^{ \ast k } (K) $ such
that
\begin{subequations}
  \label{eqn:pp}
\begin{alignat}{2}
  ( \rho _h ^\ast , \eta _h  ) _K - ( u _h ^\ast , \delta \eta _h ) _K &= \langle \widehat{ u } _h ^{\mathrm{tan}} , \eta _h ^{\mathrm{nor}} \rangle _{ \partial K } , \quad &\forall \eta _h &\in W _h ^{ \ast k + 1 } (K) ,\\
  ( \delta \rho _h ^\ast , v _h  ) _K + ( \delta u _h ^\ast  , \delta v _h  ) _K + ( \overline{p} _h ^\ast , v _h ) _K &= ( f - p _h ,  v _h ) _K - \langle \widehat{ \sigma } _h ^{\mathrm{tan}} , v _h ^{\mathrm{nor}} \rangle _{ \partial K } , \quad &\forall v _h &\in W _h ^{ \ast k } (K) ,\\
  ( u _h ^\ast , \overline{q} _h ) _K &= ( \overline{ u } _h , \overline{q}_h  ) _K ,\quad &\forall \overline{q} _h &\in \mathfrak{H} _h ^{ \ast k } (K) \label{eqn:pp_q}.
\end{alignat}
\end{subequations}

\begin{remark}
  The right-hand side only depends on the global variables $ p _h$,
  $\overline{ u } _h $, $\widehat{ \sigma } _h ^{\mathrm{tan}} $,
  $\widehat{ u } _h ^{\mathrm{tan}} $. Therefore, after we solve the
  statically condensed problem \eqref{eqn:sc}, this procedure can be
  used as an alternative to the local solvers \eqref{eqn:feec_local}
  for recovering approximations to the local variables on
  $ K \in \mathcal{T} _h $.

  We can also apply postprocessing if FEEC is implemented using
  \eqref{eqn:feec}, without hybridization, since
  $ \overline{ u } _h = P _{ \overline{ \mathfrak{H} } _h } u _h $,
  $ \widehat{ \sigma } _h ^{\mathrm{tan}} = \sigma _h ^{\mathrm{tan}}
  $, and $ \widehat{ u } _h ^{\mathrm{tan}} = u _h ^{\mathrm{tan}}
  $. In the simplicial case, since
  $ \mathfrak{H} _h ^{ \ast k } (K) = \mathring{ \mathfrak{H} } _h ^k
  (K) $, we can simply replace $ \overline{ u } _h $ by $ u _h $ on
  the right-hand side of \eqref{eqn:pp_q} without projecting.

  Note that, while the original solution variables are tangentially
  continuous between elements, the postprocessed solution variables
  generally do not have any tangential or normal continuity, i.e.,
  they are neither $ H \Lambda (\Omega) $- nor
  $ H ^\ast \Lambda (\Omega) $-conforming.
\end{remark}

\begin{example}[Stenberg postprocessing]
  When $ k = n $ and $ \mathcal{T} _h $ is a simplicial mesh, the
  space $ W _h ^{ \ast n + 1 } (K) $ is trivial,
  $ W _h ^{\ast n } (K) \cong \mathcal{P} _{ r ^\ast } (K) $, and
  $ \mathfrak{H} _h ^{ \ast n } (K) \cong \mathbb{R} $. Therefore,
  \eqref{eqn:pp} becomes
  \begin{alignat*}{2}
    ( \operatorname{grad} u _h ^\ast  , \operatorname{grad}  v _h  ) _K + ( \overline{p} _h ^\ast , v _h ) _K &= ( f ,  v _h ) _K - \langle \widehat{ \sigma } _h ^{\mathrm{tan}} , v _h \normal \rangle _{ \partial K } , \quad &\forall v _h &\in  \mathcal{P} _{ r ^\ast } (K) ,\\
    ( u _h ^\ast , \overline{q} _h ) _K &= ( \overline{ u } _h ,
    \overline{q}_h ) _K , \quad &\forall \overline{q} _h &\in \mathbb{R} ,
  \end{alignat*}
  which coincides with \citet{Stenberg1991} postprocessing for the RT
  and BDM methods. Stenberg also considered a second form of
  postprocessing with
  $ \overline{ p } _h ^\ast , \overline{ q } _h \in \mathcal{P} _r (K)
  $, but we do not consider that here.
\end{example}

\subsection{Error estimates for $ k < n $}
\label{sec:postprocessing_estimates}

We now analyze this postprocessing procedure when, as before,
$ \{ \mathcal{T} _h \} $ is a shape-regular family of simplicial
meshes of $ \Omega $. We wish to determine the accuracy of the
solution to the postprocessing problem \eqref{eqn:pp}, compared to
that obtained using the local solvers \eqref{eqn:feec_local}.

The $ k = n $ case has already been analyzed by \citet{Stenberg1991},
so we restrict our attention to $ k < n $. Since the local harmonic
spaces are trivial, the exact solver \eqref{eqn:pp_exact} simplifies
to
\begin{subequations}
  \label{eqn:pp_exact_k<n}
\begin{alignat}{2}
  ( \rho, \eta ) _K - ( u , \delta \eta ) _K &= \langle \widehat{ u } ^{\mathrm{tan}} , \eta ^{\mathrm{nor}} \rangle _{ \partial K } , \quad &\forall \eta &\in H ^\ast \Lambda ^{ k + 1 } (K) ,\\
  ( \delta \rho , v ) _K + ( \delta u , \delta v ) _K &= ( f - p,  v ) _K - \langle \widehat{ \sigma } ^{\mathrm{tan}} , v ^{\mathrm{nor}} \rangle _{ \partial K } , \quad &\forall v &\in H ^\ast \Lambda ^k (K) ,
\end{alignat}
\end{subequations}
and the postprocessing problem \eqref{eqn:pp} simplifies to
\begin{subequations}
  \label{eqn:pp_k<n}
\begin{alignat}{2}
  ( \rho _h ^\ast , \eta _h  ) _K - ( u _h ^\ast , \delta \eta _h ) _K &= \langle \widehat{ u } _h ^{\mathrm{tan}} , \eta _h ^{\mathrm{nor}} \rangle _{ \partial K } , \quad &\forall \eta _h &\in W _h ^{ \ast k + 1 } (K) , \label{eqn:pp_k<n_eta}\\
  ( \delta \rho _h ^\ast , v _h  ) _K + ( \delta u _h ^\ast  , \delta v _h  ) _K &= ( f - p _h ,  v _h ) _K - \langle \widehat{ \sigma } _h ^{\mathrm{tan}} , v _h ^{\mathrm{nor}} \rangle _{ \partial K } , \quad &\forall v _h &\in W _h ^{ \ast k } (K) \label{eqn:pp_k<n_v}.
\end{alignat}
\end{subequations}
To aid in the analysis, we introduce the intermediate approximation
$ \widetilde{ \rho } _h \in W _h ^{ \ast k + 1 } (K) $,
$ \widetilde{ u } _h \in W _h ^{ \ast k } (K) $ such that
\begin{subequations}
  \label{eqn:pp_tilde}
\begin{alignat}{2}
  ( \widetilde{ \rho } _h , \eta _h  ) _K - ( \widetilde{ u } _h , \delta \eta _h ) _K &= \langle \widehat{ u } ^{\mathrm{tan}} , \eta _h ^{\mathrm{nor}} \rangle _{ \partial K } , \quad &\forall \eta _h &\in W _h ^{ \ast k + 1 } (K) , \label{eqn:pp_tilde_eta}\\
  ( \delta \widetilde{ \rho } _h , v _h  ) _K + ( \delta \widetilde{ u } _h  , \delta v _h  ) _K &= ( f - p ,  v _h ) _K - \langle \widehat{ \sigma } ^{\mathrm{tan}} , v _h ^{\mathrm{nor}} \rangle _{ \partial K } , \quad &\forall v _h &\in W _h ^{ \ast k } (K) , \label{eqn:pp_tilde_v}
\end{alignat}
\end{subequations}
where the global variables on the right-hand side are the same as
those in the exact solution \eqref{eqn:pp_exact_k<n}.  Note that
\eqref{eqn:pp_tilde} is just the FEEC approximation of
\eqref{eqn:pp_exact_k<n} on the subcomplex
$ W _h ^\ast (K) \subset H ^\ast \Lambda (K) $, so the results of
\citet*{ArFaWi2010} immediately give us estimates for
$ \rho - \widetilde{ \rho } _h $ and $ u - \widetilde{ u } _h $.  It
therefore remains to analyze the difference between \eqref{eqn:pp_k<n}
and \eqref{eqn:pp_tilde}.

As in \citep{ArFaWi2010}, we assume that the exact solution satisfies
an elliptic regularity estimate of the form
\begin{equation*}
  \lVert u \rVert _{ t + 2, \Omega } + \lVert p \rVert _{ t + 2 ,\Omega } + \lVert \mathrm{d} u \rVert _{ t + 1 , \Omega } + \lVert \sigma \rVert _{ t + 1 , \Omega } + \lVert \mathrm{d} \sigma \rVert _{ t , \Omega } \lesssim \lVert f \rVert _{t , \Omega } ,
\end{equation*}
for $ 0 \leq t \leq t _{\max} $, where
$ \lVert \cdot \rVert _{ t, \Omega } $ denotes the $ H ^t $ norm on
$\Omega$. We will frequently invoke \citep[Theorem 3.11]{ArFaWi2010},
which gives $ L ^2 $ error estimates for the FEEC solution in terms of
the best approximation allowed by the regularity of the exact solution
and the polynomial degree of the finite element spaces. These
estimates will be applied both to the original FEEC approximation
\eqref{eqn:feec} on $ V _h $ and to the intermediate approximation
\eqref{eqn:pp_tilde} on $ W _h ^\ast (K) $.

We want the postprocessed solution to be at least as good as the
standard FEEC solution obtained from the local solvers
\eqref{eqn:feec_local}. The following assumptions ensure that
$ r ^\ast $ is large enough for the $ W _h ^\ast (K) $ complex to
approximate the exact solution as well as $ W _h (K) $ does. If
$ f \perp \mathfrak{B} ^k $, then $ \sigma = 0 $, so it is enough for
$ W _h ^{ \ast k } (K) $ to contain the same total space of
polynomials as $ W _h ^k (K) $, i.e., $ r ^\ast \geq r $. Otherwise,
in order to approximate $\sigma \neq 0$, we also need the stronger condition
that $ W _h ^{\ast k-1} (K) $ contains the same total space of
polynomials as $ W _h ^{ k -1 } (K) $.

\begin{assumption}
  \label{assumption}
  Assume that we are in one of the following three cases:
  \begin{enumerate}
  \item $ f \perp \mathfrak{B} ^k $ and $ r ^\ast \geq r $.

  \item
    $ W _h ^{ k -1 } (K) = \mathcal{P} _{ r + 1 } \Lambda ^{ k -1 }
    (K) $ and $ \star W _h ^{\ast k} (K) =
    \begin{cases}
      \mathcal{P} _{ r ^\ast } \Lambda ^{ n
        - k } (K), & r ^\ast \geq r + 2, \\
      \mathcal{P} _{ r ^\ast + 1 } ^- \Lambda ^{ n - k } (K), & r
      ^\ast \geq r + 1.
    \end{cases}$

  \item $ W _h ^{ k -1 } (K) = \mathcal{P} _{ r + 1 } ^- \Lambda ^{ k -1 }
    (K) $ and $ \star W _h ^{\ast k} (K) =
    \begin{cases}
      \mathcal{P} _{ r ^\ast } \Lambda ^{ n
        - k } (K), & r ^\ast \geq r + 1, \\
      \mathcal{P} _{ r ^\ast + 1 } ^- \Lambda ^{ n - k } (K), & r
      ^\ast \geq r .
    \end{cases}$
  \end{enumerate} 
\end{assumption}

Our first result shows that $ \delta \rho _h ^\ast $ gives an improved
approximation of $ \delta \rho = \delta \mathrm{d} u $, compared to
$ \delta \mathrm{d} u _h $. In particular, when
$ f = \delta \rho \in \mathring{ \mathfrak{B} } _k ^\ast $, we can
obtain an arbitrarily good approximation by taking the postprocessing
degree $ r ^\ast $ large enough.

\begin{theorem}
  \label{thm:pp_deltarho}
  For each $ K \in \mathcal{T} _h $ and $ 0 \leq s \leq t _{\max} $, we have
  \begin{align*}
    \bigl\lVert \delta ( \rho - \widetilde{ \rho } _h ) \bigr\rVert _K
    &\lesssim h _K ^s \lVert f \rVert _{ s , K } , \quad \text{if }  s \leq r ^\ast + 1 ,\\
    \bigl\lVert \delta ( \widetilde{ \rho } _h - \rho _h ^\ast ) \bigr\rVert _K &\leq \bigl\lVert \mathrm{d} ( \sigma - \sigma _h ) \bigr\rVert _K + \lVert p - p _h \rVert _K .
  \end{align*}
  Consequently, if \cref{assumption} holds, then
  \begin{equation*}
    \bigl\lVert \delta ( \rho - \rho _h ^\ast ) \bigr\rVert _{ \mathcal{T} _h } \lesssim h ^s \lVert f \rVert _{s , \Omega } , \quad \text{if }
    \begin{cases}
      s \leq r ^\ast + 1 ,& f \in \mathring{ \mathfrak{B}  } _k ^\ast  ,\\
      s \leq r + 1, & \text{otherwise}.
    \end{cases}
  \end{equation*}
\end{theorem}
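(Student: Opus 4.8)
The plan is to establish the two local estimates separately and then assemble the consequent by the triangle inequality, summing over $\mathcal{T}_h$, and invoking the a priori estimates of \citet*{ArFaWi2010} together with the assumed elliptic regularity. The first estimate is soft: \eqref{eqn:pp_tilde} is exactly the FEEC approximation of the exact local problem \eqref{eqn:pp_exact_k<n} on the subcomplex $W_h^\ast(K)\subset H^\ast\Lambda(K)$, which is by construction the Hodge dual of a stable pair \eqref{eqn:stable_pairs} with $k$ and $r$ replaced by $n-k$ and $r^\ast$. Applying the $H^\ast\Lambda$-version of \citep[Theorem 3.11]{ArFaWi2010} (with $\rho$, $\delta\rho$ in the roles of $\sigma$, $\mathrm{d}\sigma$) gives $\lVert\delta(\rho-\widetilde\rho_h)\rVert_K\lesssim E_K^\ast(\delta\rho)$, where $E_K^\ast$ is the best $L^2(K)$-approximation in $W_h^{\ast k}(K)$; since $\star W_h^{\ast k}(K)\supseteq\mathcal{P}_{r^\ast}\Lambda^{n-k}(K)$ in either admissible choice, the Bramble--Hilbert lemma yields $E_K^\ast(\delta\rho)\lesssim h_K^s\lvert\delta\rho\rvert_{s,K}$ for $s\le r^\ast+1$. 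The strong form of \eqref{eqn:pp_exact_k<n} reads $\delta\rho=f-p-\mathrm{d}\sigma$ on $K$, which equals $f$ exactly when $f\in\mathring{\mathfrak{B}}_k^\ast$ (then $\sigma=0$, $p=0$) and is controlled by $f$ in the $H^s$-norm via elliptic regularity in general; this accounts for the $\lVert f\rVert_{s,K}$ on the right-hand side.

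The second estimate carries the new content. Since \eqref{eqn:pp_tilde} and \eqref{eqn:pp_k<n} are the same discrete problem on $W_h^\ast(K)$ with data differing only through $p$ versus $p_h$ and through the traces --- which, by \cref{thm:dd,thm:feec-h}, satisfy $\widehat\sigma^{\mathrm{tan}}-\widehat\sigma_h^{\mathrm{tan}}=(\sigma-\sigma_h)^{\mathrm{tan}}$ and $\widehat u^{\mathrm{tan}}-\widehat u_h^{\mathrm{tan}}=(u-u_h)^{\mathrm{tan}}$ --- subtracting the second equations gives, with $e_\rho\coloneqq\widetilde\rho_h-\rho_h^\ast$,
\[(\delta e_\rho,v_h)_K+\bigl(\delta(\widetilde u_h-u_h^\ast),\delta v_h\bigr)_K=(p_h-p,v_h)_K-\bigl\langle(\sigma-\sigma_h)^{\mathrm{tan}},v_h^{\mathrm{nor}}\bigr\rangle_{\partial K},\quad\forall v_h\in W_h^{\ast k}(K).\]
I would then test with $v_h=\delta e_\rho$, which lies in $W_h^{\ast k}(K)$ because $W_h^\ast(K)$ is a subcomplex; the middle term drops out since $\delta\delta=0$, and the integration-by-parts formula \eqref{eqn:ibp} on $K$ rewrites the boundary term as $\bigl(\mathrm{d}(\sigma-\sigma_h),\delta e_\rho\bigr)_K$ (again using $\delta\delta e_\rho=0$). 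This leaves $\lVert\delta e_\rho\rVert_K^2=\bigl(p_h-p-\mathrm{d}(\sigma-\sigma_h),\delta e_\rho\bigr)_K$, and Cauchy--Schwarz gives the claimed bound. This is the $H^\ast\Lambda$-complex analogue of the \citet{ArBr1985} argument used in \cref{sec:error_estimates} for the normal traces.

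Finally, the consequent follows from $\lVert\delta(\rho-\rho_h^\ast)\rVert_K\le\lVert\delta(\rho-\widetilde\rho_h)\rVert_K+\lVert\delta(\widetilde\rho_h-\rho_h^\ast)\rVert_K$, squaring and summing over $\mathcal{T}_h$, and using $h_K\le h$. The first sum is $\lesssim h^s\lVert\delta\rho\rVert_{s,\Omega}\lesssim h^s\lVert f\rVert_{s,\Omega}$ for $s\le r^\ast+1$; the second is $\lesssim\lVert p-p_h\rVert_\Omega+\lVert\mathrm{d}(\sigma-\sigma_h)\rVert_\Omega\lesssim E(p)+E(\mathrm{d}\sigma)\lesssim h^s\lVert f\rVert_{s,\Omega}$ for $s\le r+1$, by \citep[Theorem 3.11]{ArFaWi2010}, Bramble--Hilbert, and elliptic regularity. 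Because \cref{assumption} forces $r^\ast\ge r$, the binding constraint in the general case is $s\le r+1$; and when $f\in\mathring{\mathfrak{B}}_k^\ast$ the estimates of \citet*{ArFaWi2010} give $\sigma_h=0$ and $p_h=0$, so the second sum vanishes identically and only $s\le r^\ast+1$ remains.

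The decisive point is the test-function choice $v_h=\delta e_\rho$ in the second estimate: combined with $\delta\delta=0$ and the integration-by-parts identity, it converts the interelement mismatch of tangential traces into volume norms of $\sigma-\sigma_h$ and $p-p_h$; once this is in place, the rest is routine. The only remaining care is in the bookkeeping --- tracking that the $\widetilde\rho_h$-term converges at the order set by $r^\ast$ while the $\widetilde\rho_h-\rho_h^\ast$-term is limited to $r+1$, which is exactly why the final estimate splits on whether $f$ is coexact.
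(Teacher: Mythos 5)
Your proposal is correct and follows essentially the same route as the paper: the key step in both is to test the difference of the two discrete ``$v$-equations'' with $v_h = \delta(\widetilde{\rho}_h - \rho_h^\ast)$ (the paper phrases this as restricting to $v_h \in \mathfrak{B}_h^{\ast k}(K)$ first), use $\delta\delta = 0$ to kill the $\delta u$-term, and integrate by parts to convert the tangential-trace mismatch $\widehat{\sigma}^{\mathrm{tan}} - \widehat{\sigma}_h^{\mathrm{tan}} = (\sigma-\sigma_h)^{\mathrm{tan}}$ into the volume term $\bigl(\mathrm{d}(\sigma-\sigma_h), \delta e_\rho\bigr)_K$, with the remainder of the argument being the same appeal to \citep[Theorem 3.11]{ArFaWi2010}, \cref{assumption}, and the triangle inequality.
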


\begin{proof}
  The first estimate is immediate from \citep[Theorem
  3.11]{ArFaWi2010} applied to the problem \eqref{eqn:pp_tilde}. Next,
  subtracting \eqref{eqn:pp_k<n_v} from \eqref{eqn:pp_tilde_v} with
  $ v _h \in \mathfrak{B} _h ^{ \ast k } (K) $ gives
  \begin{align*}
    \bigl( \delta ( \widetilde{ \rho } _h - \rho _h ^\ast ), v _h \bigr) _K
    &= ( p _h - p ,  v _h ) _K + \langle \widehat{ \sigma } _h ^{\mathrm{tan}} - \widehat{ \sigma } ^{\mathrm{tan}} , v _h ^{\mathrm{nor}} \rangle _{ \partial K } \\
    &= ( p _h - p ,  v _h ) _K + \bigl( \mathrm{d} (\sigma  _h  - \sigma ), v _h \bigr) _K \\
    &\leq \Bigl( \bigl\lVert \mathrm{d} ( \sigma - \sigma _h ) \bigr\rVert _K + \lVert p - p _h \rVert _K \Bigr) \lVert v _h \rVert _K ,
  \end{align*}
  and taking
  $ v _h = \delta ( \widetilde{ \rho } _h - \rho _h ^\ast ) $ implies
  the second estimate. Finally, summing over $ K \in \mathcal{T} _h $
  and applying \citep[Theorem 3.11]{ArFaWi2010} once more gives
  \begin{equation*}
    \bigl\lVert \delta ( \widetilde{ \rho } _h - \rho _h ^\ast ) \bigr\rVert _{ \mathcal{T} _h } \lesssim
    \begin{cases}
      0 , & \text{if } f \in \mathring{ \mathfrak{B} } _k ^\ast ,\\
      h ^s \lVert f \rVert _{ s, \Omega } , & \text{if } s \leq r + 1
      , \text{ otherwise},
    \end{cases}
  \end{equation*}
  so the last estimate follows by \cref{assumption} and the triangle
  inequality.
\end{proof}

The next result says that, generically, $ \delta u _h ^\ast $
approximates $ \sigma = \delta u $ as well as $ \sigma _h $ does, but
no better. In the case $ f \in \mathring{ \mathfrak{B} } _k ^\ast $,
when $ \sigma = \sigma _h = 0 $, we can make $ \delta u _h ^\ast $
arbitrarily small by taking $ r ^\ast $ large enough.

\begin{theorem}
  \label{thm:pp_deltau}
  For each $ K \in \mathcal{T} _h $ and $ 0 \leq s \leq t _{ \max } $,
  we have
  \begin{align*}
    \bigl\lVert \delta ( u - \widetilde{ u } _h ) \bigr\rVert _K &\lesssim h _K ^{ s + 1 } \lVert  f \rVert  _{ s, K } , \quad \text{if }
    \begin{cases}
      s \leq r ^\ast + 1 , & f \in \mathring{ \mathfrak{B}  } _k ^\ast ,\\
      s \leq r ^\ast ,& \star W _h ^{ \ast k } (K) = \mathcal{P} _{r ^\ast + 1 } ^- \Lambda ^{n-k} ( K ),\\
      s \leq r ^\ast - 1 , & \star W _h ^{ \ast k } (K) = \mathcal{P} _{r ^\ast} \Lambda ^{n-k} ( K ),
    \end{cases} \\
    \bigl\lVert \delta ( \widetilde{ u } _h - u _h ^\ast ) \bigr\rVert _K &\lesssim \lVert \sigma - \sigma _h \rVert _K + h _K \Bigl( \bigl\lVert  \mathrm{d} ( \sigma - \sigma _h ) \bigr\rVert _K +  \lVert p - p _h \rVert _K \Bigr) .
  \end{align*}
  Consequently, if \cref{assumption} holds, then
  \begin{equation*}
    \bigl\lVert \delta ( u - u _h ^\ast ) \bigr\rVert _{\mathcal{T} _h}  \lesssim h ^{ s + 1 } \lVert f \rVert _{ s, \Omega } ,\quad 
    \text{if }
    \begin{cases}
      s \leq r ^\ast + 1 , & f \in \mathring{ \mathfrak{B}  } _k ^\ast ,\\
      s \leq r + 1, & V _h ^{ k -1 } = \mathcal{P} _{ r + 1 } \Lambda ^{ k -1 } ( \mathcal{T} _h ), \\
      s \leq r, & V _h ^{ k -1 } = \mathcal{P} _{ r + 1 } ^-\Lambda ^{ k -1 } ( \mathcal{T} _h )  .
    \end{cases}
  \end{equation*}
\end{theorem}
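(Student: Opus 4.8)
The plan is to mirror the proof of \cref{thm:pp_deltarho}: split $ \delta ( u - u _h ^\ast ) = \delta ( u - \widetilde{ u } _h ) + \delta ( \widetilde{ u } _h - u _h ^\ast ) $, estimate the first (approximation) term using the FEEC convergence theory applied to the dual problem \eqref{eqn:pp_exact_k<n}, and estimate the second (consistency) term by subtracting \eqref{eqn:pp_k<n} from \eqref{eqn:pp_tilde}.

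For the first estimate, I would observe that \eqref{eqn:pp_tilde} is exactly the FEEC approximation on the subcomplex $ W _h ^\ast (K) \subset H ^\ast \Lambda (K) $ of the exact dual solver \eqref{eqn:pp_exact_k<n}, which under the Hodge star is the mixed Hodge--Laplace problem in degree $ n - k $ with polynomial parameter $ r ^\ast $, with trivial harmonic space since $ k < n $. Under this dictionary, $ u $ is the primary variable, $ \rho = \mathrm{d} u $ the auxiliary variable, $ \delta u = \sigma $ plays the role of ``$ \mathrm{d} u $'', and $ \delta \rho $ the role of ``$ \mathrm{d} \sigma $''. Hence \citep[Theorem~3.11]{ArFaWi2010} gives $ \bigl\lVert \delta ( u - \widetilde{ u } _h ) \bigr\rVert _K \lesssim E ( \delta u ) + h _K E ( \delta \rho ) $, with $ E ( \cdot ) $ denoting best $ L ^2 $ approximation in the relevant component of $ \star W _h ^\ast (K) $. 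Combining this with the local elliptic regularity estimate (which bounds $ \lVert \delta u \rVert _{ s + 1 , K } $ and $ \lVert \delta \rho \rVert _{ s , K } $ by $ \lVert f \rVert _{ s, K } $), the approximation properties of the $ \mathcal{P} _{ r ^\ast } ^\pm $ spaces, and the Bramble--Hilbert lemma yields the first estimate; the three cases on $ s $ correspond to (i) $ f \in \mathring{ \mathfrak{B} } _k ^\ast $, where $ \sigma = \delta u = 0 $ so only the $ h _K E ( \delta \rho ) $ term survives (with $ \delta \rho = f $), and (ii)--(iii) the order to which $ \star W _h ^{ \ast k } (K) $ resolves $ \delta u $.

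For the second estimate, I would use the discrete Hodge decomposition of $ w _h \coloneqq \widetilde{ u } _h - u _h ^\ast \in W _h ^{ \ast k } (K) $, writing $ w _h = w _h ^{ \mathfrak{B} } + w _h ^{ \mathfrak{Z} ^\perp } $ with $ w _h ^{ \mathfrak{B} } \in \mathfrak{B} _h ^{ \ast k } (K) $, $ w _h ^{ \mathfrak{Z} ^\perp } \in \mathfrak{Z} _h ^{ \ast k \perp } (K) $, and trivial harmonic part. Then $ \delta w _h = \delta w _h ^{ \mathfrak{Z} ^\perp } $, and the discrete Poincar\'e inequality on $ K $ (whose constant scales like $ h _K $) gives $ \lVert w _h ^{ \mathfrak{Z} ^\perp } \rVert _K \lesssim h _K \lVert \delta w _h \rVert _K $. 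Writing $ \lVert \delta w _h \rVert _K ^2 = ( \delta w _h , \delta w _h ^{ \mathfrak{Z} ^\perp } ) _K $ and substituting $ v _h = w _h ^{ \mathfrak{Z} ^\perp } $ into the difference of \eqref{eqn:pp_tilde_v} and \eqref{eqn:pp_k<n_v} produces three terms on the right: a term $ ( \delta ( \widetilde{ \rho } _h - \rho _h ^\ast ) , w _h ^{ \mathfrak{Z} ^\perp } ) _K $, bounded via the consistency estimate of \cref{thm:pp_deltarho} and the Poincar\'e bound by $ h _K \bigl( \bigl\lVert \mathrm{d} ( \sigma - \sigma _h ) \bigr\rVert _K + \lVert p - p _h \rVert _K \bigr) \lVert \delta w _h \rVert _K $; a term $ ( p _h - p , w _h ^{ \mathfrak{Z} ^\perp } ) _K $, similarly higher order; and the boundary term $ \langle \widehat{ \sigma } ^{\mathrm{tan}} - \widehat{ \sigma } _h ^{\mathrm{tan}} , ( w _h ^{ \mathfrak{Z} ^\perp } ) ^{\mathrm{nor}} \rangle _{ \partial K } $, which by integration by parts \eqref{eqn:ibp}---valid since $ ( \sigma - \sigma _h ) \rvert _K \in H \Lambda ^{ k -1 } (K) $ has matching tangential trace---equals $ ( \mathrm{d} ( \sigma - \sigma _h ) , w _h ^{ \mathfrak{Z} ^\perp } ) _K - ( \sigma - \sigma _h , \delta w _h ) _K $; the first piece is again higher order via Poincar\'e, while the second contributes the leading term $ \lVert \sigma - \sigma _h \rVert _K \lVert \delta w _h \rVert _K $. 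Dividing through by $ \lVert \delta w _h \rVert _K $ gives the estimate. This boundary term is the main obstacle: controlling it is what forces the use of the element-level discrete Poincar\'e inequality and the Hodge decomposition of $ w _h $, and is what distinguishes the argument from the proof of \cref{thm:pp_deltarho}.

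Finally, for the ``Consequently'' part, I would sum both estimates over $ K \in \mathcal{T} _h $ using $ \sum _K \lVert \cdot \rVert _K ^2 = \lVert \cdot \rVert _\Omega ^2 $. In the bound for $ \bigl\lVert \delta ( \widetilde{ u } _h - u _h ^\ast ) \bigr\rVert _{ \mathcal{T} _h } $, the $ h \bigl\lVert \mathrm{d} ( \sigma - \sigma _h ) \bigr\rVert _\Omega $ and $ h \lVert p - p _h \rVert _\Omega $ contributions carry an extra power of $ h $ and so are of higher order than the leading $ \lVert \sigma - \sigma _h \rVert _\Omega $; by \citep[Theorem~3.11]{ArFaWi2010} and the assumed elliptic regularity the latter converges at order $ h ^{ s + 1 } $ for $ s \leq r + 1 $ when $ V _h ^{ k -1 } = \mathcal{P} _{ r + 1 } \Lambda ^{ k -1 } ( \mathcal{T} _h ) $ and for $ s \leq r $ when $ V _h ^{ k -1 } = \mathcal{P} _{ r + 1 } ^- \Lambda ^{ k -1 } ( \mathcal{T} _h ) $. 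Under \cref{assumption}, the bound for $ \bigl\lVert \delta ( u - \widetilde{ u } _h ) \bigr\rVert _{ \mathcal{T} _h } $ matches (or, in the coexact case, exceeds) these rates, so the triangle inequality yields the claimed estimate.
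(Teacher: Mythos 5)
Your proof follows the paper's argument essentially verbatim: the first estimate comes from \citep[Theorem 3.11]{ArFaWi2010} applied to the intermediate problem \eqref{eqn:pp_tilde}, and the second from testing the difference of \eqref{eqn:pp_tilde_v} and \eqref{eqn:pp_k<n_v} with the $ \mathfrak{Z} _h ^{ \ast k \perp } (K) $-component of $ \widetilde{ u } _h - u _h ^\ast $ (which is exactly the paper's choice of $ v _h $ with $ \delta v _h = \delta ( \widetilde{ u } _h - u _h ^\ast ) $), rewriting the boundary term by integration by parts, and invoking the scaled Poincar\'e inequality, with the summation and \cref{assumption} handling the global statement. The one harmless inefficiency is that you bound the term $ \bigl( \delta ( \widetilde{ \rho } _h - \rho _h ^\ast ) , w _h ^{ \mathfrak{Z} ^\perp } \bigr) _K $ via \cref{thm:pp_deltarho}, whereas it vanishes identically since $ \delta ( \widetilde{ \rho } _h - \rho _h ^\ast ) \in \mathfrak{B} _h ^{ \ast k } (K) \subset \mathfrak{Z} _h ^{ \ast k } (K) $ is orthogonal to your test function --- which is why it never appears in the paper's computation.
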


\begin{proof}
  The first estimate is immediate from \citep[Theorem
  3.11]{ArFaWi2010}. Next, subtracting \eqref{eqn:pp_k<n_v} from
  \eqref{eqn:pp_tilde_v} with
  $ v _h \in \mathfrak{Z} _h ^{ \ast k \perp } (K) $ gives
  \begin{align*}
    \bigl( \delta ( \widetilde{ u } _h - u _h ^\ast ), \delta v _h \bigr) _K
    &= ( p _h - p ,  v _h ) _K + \langle \widehat{ \sigma } _h ^{\mathrm{tan}} - \widehat{ \sigma } ^{\mathrm{tan}} , v _h ^{\mathrm{nor}} \rangle _{ \partial K } \\
    &= ( p _h - p ,  v _h ) _K + \bigl( \mathrm{d} (\sigma _h - \sigma ) , v _h \bigr) _K - ( \sigma _h - \sigma , \delta v _h ) _K  \\
    &\lesssim \biggl[ \lVert \sigma - \sigma _h \rVert _K + h _K \Bigl( \bigl\lVert  \mathrm{d} ( \sigma - \sigma _h ) \bigr\rVert _K +  \lVert p - p _h \rVert _K \Bigr) \biggr] \lVert \delta v _h \rVert _K .
  \end{align*}
  In the last step, we have applied Cauchy--Schwarz and the Poincar\'e
  inequality with scaling, which says that
  $ \lVert v _h \rVert _K \lesssim h _K \lVert \delta v _h \rVert _K
  $. Taking $ v _h $ such that
  $ \delta v _h = \delta ( \widetilde{ u } _h - u _h ^\ast ) $ implies
  the second estimate. Finally, summing over $ K \in \mathcal{T} _h $
  and applying \citep[Theorem 3.11]{ArFaWi2010} gives
  \begin{equation*}
    \bigl\lVert \delta ( \widetilde{ u } _h  - u _h ^\ast ) \bigr\rVert _{\mathcal{T} _h} \lesssim
    \begin{cases}
      0, & \text{if } f \in \mathring{ \mathfrak{B}  } _k ^\ast ,\\
      h ^{ s + 1 } \lVert f \rVert _{ s, \Omega } , & \text{otherwise, if }
      \begin{cases}
        s \leq r + 1, & V _h ^{ k -1 } = \mathcal{P} _{ r + 1 } \Lambda ^{ k -1 } ( \mathcal{T} _h ), \\
        s \leq r, & V _h ^{ k -1 } = \mathcal{P} _{ r + 1 } ^-\Lambda
        ^{ k -1 } ( \mathcal{T} _h ) ,
      \end{cases}
    \end{cases}
  \end{equation*}
  so the last estimate follows by \cref{assumption} and the triangle
  inequality.
\end{proof}

Thus far, we have been able to avoid dealing with the error term
$ \widehat{ u } ^{\mathrm{tan}} - \widehat{ u } ^{\mathrm{tan}} _h $,
which dominates the postprocessing error, preventing improved
convergence of the $ \mathfrak{B} _h ^\ast (K) $ components. There is
one special exception, however: when $ k = n -1 $, the space
$ \mathfrak{B} _h ^{ \ast n } (K) $ is trivial, so there is no error
in this component of $ \rho _h ^\ast $. In this case, we will see that
$ \rho _h ^\ast $ is an improved estimate compared to
$ \mathrm{d} u _h $. Since
$ \mathfrak{H} _h ^{ \ast n } (K) \cong \mathbb{R} $ is nontrivial,
though, we need to control the $ \overline{ \mathfrak{H} } ^n $
component of the error, which we will do with the aid of the following
lemma.

\begin{lemma}
  \label{lem:n-1_trace}
  If $ k = n -1 $ and $ \eta _h \in \overline{ \mathfrak{H} } ^n $,
  then
  \begin{equation*}
    \langle \widehat{ u } ^{\mathrm{tan}} - \widehat{ u } _h
    ^{\mathrm{tan}} , \eta _h ^{\mathrm{nor}} \rangle _{ \partial \mathcal{T} _h }
    \lesssim h \Bigl( \bigl\lVert \mathrm{d} ( \sigma - \sigma _h )
    \bigr\rVert _\Omega + \lVert p - p _h \rVert _\Omega \Bigr) \lVert
    \eta _h \rVert _\Omega .
  \end{equation*}
  In particular, if
  $ f \in \mathring{ \mathfrak{B} } _{ n -1 } ^\ast $, then
  $ \int _{ \partial K } \operatorname{tr} ( u - u _h ) = 0 $ for all
  $ K \in \mathcal{T} _h $.
\end{lemma}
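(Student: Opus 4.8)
The plan is to rewrite the left-hand side as an $L^2$ inner product on $\Omega$ to which \cref{lem:Bh} applies, and then read off the ``in particular'' claim by specializing $\eta_h$. Since $u,u_h \in H\Lambda^{n-1}(\Omega)$ and $\widehat u_h^{\mathrm{tan}} = u_h^{\mathrm{tan}}$, the difference $\widehat u^{\mathrm{tan}} - \widehat u_h^{\mathrm{tan}}$ is the single-valued tangential trace $(u - u_h)^{\mathrm{tan}}$. First I would apply the integration by parts formula on $\partial\mathcal{T}_h$ (obtained by summing \eqref{eqn:ibp} over the elements) with $\tau = u - u_h \in H\Lambda^{n-1}(\mathcal{T}_h)$ and $v = \eta_h \in H^\ast\Lambda^n(\mathcal{T}_h)$, using that each component of $\eta_h \in \overline{\mathfrak{H}}^n = \prod_{K}\mathring{\mathfrak{H}}^n(K)$ is coclosed, so that $\delta\eta_h = 0$ in $H^\ast\Lambda^{n-1}(\mathcal{T}_h)$. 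This gives
\begin{equation*}
  \langle \widehat u^{\mathrm{tan}} - \widehat u_h^{\mathrm{tan}}, \eta_h^{\mathrm{nor}}\rangle_{\partial\mathcal{T}_h}
  = \bigl(\mathrm{d}(u - u_h), \eta_h\bigr)_{\mathcal{T}_h}.
\end{equation*}

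Next I would note that $\overline{\mathfrak{H}}^n$ consists exactly of the piecewise-constant $n$-forms, so $\overline{\mathfrak{H}}^n \subset V_h^n = W_h^n$. Since $\mathrm{d}$ annihilates $n$-forms we have $\mathfrak{Z}_h^{n\perp} = 0$, and since $\mathfrak{H}^n$ is trivial for the natural-boundary-condition de~Rham complex, so is $\mathfrak{H}_h^n$; hence $V_h^n = \mathfrak{B}_h^n$ and in particular $\eta_h \in \mathfrak{B}_h^n$. Therefore $(\mathrm{d}(u - u_h), \eta_h)_{\mathcal{T}_h} = (P_{\mathfrak{B}_h}\mathrm{d}(u - u_h), \eta_h)_\Omega$, and Cauchy--Schwarz combined with \cref{lem:Bh} gives
\begin{equation*}
  \bigl\lvert\langle \widehat u^{\mathrm{tan}} - \widehat u_h^{\mathrm{tan}}, \eta_h^{\mathrm{nor}}\rangle_{\partial\mathcal{T}_h}\bigr\rvert
  \le \bigl\lVert P_{\mathfrak{B}_h}\mathrm{d}(u - u_h)\bigr\rVert_\Omega\,\lVert\eta_h\rVert_\Omega
  \lesssim h\Bigl(\bigl\lVert\mathrm{d}(\sigma - \sigma_h)\bigr\rVert_\Omega + \lVert p - p_h\rVert_\Omega\Bigr)\lVert\eta_h\rVert_\Omega,
\end{equation*}
which is the asserted estimate.

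For the final sentence, suppose $f \in \mathring{\mathfrak{B}}_{n-1}^\ast = \mathfrak{Z}^{(n-1)\perp}$, so $f \perp \mathfrak{Z}^{n-1}$ and hence also $f \perp \mathfrak{B}^{n-1}$. Testing \eqref{eqn:hodge-laplace_v} against $v = \mathrm{d}\tau \in \mathfrak{B}^{n-1}$ (the $(\mathrm{d} u,\cdot)$ and $(p,\cdot)$ terms dropping out) gives $\mathrm{d}\sigma = 0$, and then \eqref{eqn:hodge-laplace_tau} with $\tau = \sigma$ forces $\sigma = 0$; testing against $v \in \mathfrak{H}^{n-1}$ similarly gives $p = 0$. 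The same computation on the discrete problem \eqref{eqn:feec} yields $\sigma_h = 0$ and $p_h = 0$: here one uses the subcomplex inclusions $\mathfrak{B}_h^{n-1} \subset \mathfrak{B}^{n-1}$ and $\mathfrak{H}_h^{n-1} \subset \mathfrak{Z}_h^{n-1} \subset \mathfrak{Z}^{n-1}$ to see that $f$ is $L^2$-orthogonal to the relevant discrete spaces. Consequently the right-hand side of the estimate above vanishes, so $\langle (u - u_h)^{\mathrm{tan}}, \eta_h^{\mathrm{nor}}\rangle_{\partial\mathcal{T}_h} = 0$ for every $\eta_h \in \overline{\mathfrak{H}}^n$. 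Taking $\eta_h$ equal to $\mathrm{vol}$ on a single $K \in \mathcal{T}_h$ and zero elsewhere, and using $\operatorname{tr}\star\mathrm{vol} = 1$, the pairing on $\partial K$ reduces to $\int_{\partial K}\operatorname{tr}(u - u_h)$, which therefore vanishes.

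The only genuinely nonroutine input is \cref{lem:Bh}, which is what produces the extra factor of $h$; the remaining steps are bookkeeping with the Hodge decomposition and the identification of $\overline{\mathfrak{H}}^n$ with piecewise constants inside $\mathfrak{B}_h^n$. The point most easily missed is that one must check $\sigma_h = p_h = 0$ (not only $\sigma = p = 0$) in the special case, which is why the inclusions $\mathfrak{B}_h \subset \mathfrak{B}$, $\mathfrak{Z}_h \subset \mathfrak{Z}$ are invoked rather than the (generally false) inclusion $\mathfrak{H}_h \subset \mathfrak{H}$.
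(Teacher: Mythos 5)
Your proof is correct and follows essentially the same route as the paper: integrate by parts using $\delta\eta_h = 0$ to get $\bigl(\mathrm{d}(u-u_h),\eta_h\bigr)_{\mathcal{T}_h}$, note that piecewise constants lie in $V_h^n = \mathfrak{B}_h^n$, and invoke \cref{lem:Bh}. The only difference is that you spell out the verification that $\sigma_h = p_h = 0$ (not just $\sigma = p = 0$) when $f \in \mathring{\mathfrak{B}}_{n-1}^\ast$, and the specialization of $\eta_h$ to a single element for the final claim---details the paper's proof leaves implicit.
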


\begin{proof}
  Since $ \eta _h $ is piecewise constant,
  $ \langle \widehat{ u } ^{\mathrm{tan}} - \widehat{ u } _h
  ^{\mathrm{tan}} , \eta _h ^{\mathrm{nor}} \rangle _{ \partial
    \mathcal{T} _h } = \bigl( \mathrm{d} ( u - u _h ) , \eta _h \bigr)
  _{ \mathcal{T} _h } $. Piecewise constants are in
  $ V _h ^n = \mathfrak{B} _h ^n $, so the estimate follows by
  \cref{lem:Bh}. In particular, $\sigma$ and $p$ vanish when
  $ f \in \mathring{ \mathfrak{B} } _{ n -1 } ^\ast $, so in that case
  the left-hand side is identically zero.
\end{proof}

\begin{remark}
  This generalizes the well-known property that, when $ n = 1 $ and
  $ k = 0 $, the continuous Galerkin solution equals the exact
  solution at nodes.
\end{remark}

We now show that $ \rho _h ^\ast $ approximates
$ \rho = \mathrm{d} u $ as well as $ \mathrm{d} u _h $ does, but no
better when $ k < n -1 $. However, when $ k = n -1 $, we get an
improved estimate, and when
$ f \in \mathring{ \mathfrak{B} } ^\ast _{n-1} $, we can obtain an
arbitrarily good approximation by taking $ r ^\ast $ large enough.

\begin{theorem}
  For each $ K \in \mathcal{T} _h $ and $ 0 \leq s \leq t _{\max} $,
  \begin{align*}
    \lVert \rho - \widetilde{ \rho } _h \rVert _K &\lesssim h _K ^{ s + 1 } \lVert f \rVert _{ s, K } , \quad \text{if }
    \begin{cases}
      s \leq r ^\ast + 1 , & \star W _h ^{ \ast k + 1 } (K) = \mathcal{P} _{ r ^\ast + 1 } \Lambda ^{ n - k -1 } (K),\\
      s \leq r ^\ast , & \star W _h ^{ \ast k + 1 } (K) = \mathcal{P} _{ r ^\ast + 1 } ^- \Lambda ^{ n - k -1 } (K) ,
    \end{cases} \\
    \lVert \widetilde{ \rho } _h  - \rho _h ^\ast \rVert _K &\lesssim \bigl\lVert \mathrm{d} ( u - u _h ) \bigr\rVert _K + h _K \Bigl( \bigl\lVert \mathrm{d} ( \sigma - \sigma _h ) \bigr\rVert _K + \lVert p - p _h \rVert _K \Bigr) .
  \end{align*}
  Consequently, if \cref{assumption} holds, then
  \begin{equation*}
    \lVert \rho - \rho _h ^\ast \rVert _{ \mathcal{T} _h } \lesssim h ^{ s + 1 } \lVert f \rVert _{ s, \Omega } ,\quad 
    \text{if }
    \begin{cases}
      s \leq r + 1 , & f \perp \mathring{ \mathfrak{B}  } _k ^\ast ,\\
      s \leq r , &  V _h ^k = \mathcal{P} _{ r + 1 } ^- \Lambda ^k ( \mathcal{T} _h )\\
      s \leq r -1 , & V _h ^k = \mathcal{P} _r \Lambda ^k ( \mathcal{T} _h ) .
    \end{cases}
  \end{equation*}
  In the case $ k = n -1 $, this estimate may be improved to
  \begin{equation*}
    \lVert \rho - \rho _h ^\ast \rVert _{ \mathcal{T} _h } \lesssim h ^{ s + 1 } \lVert f \rVert _{ s, \Omega } ,\quad 
    \text{if }
    \begin{cases}
      s \leq r ^\ast + 1 , & f \in \mathring{ \mathfrak{B}  } _{n-1} ^\ast ,\ \star W _h ^{ \ast k + 1 } (K) = \mathcal{P} _{ r ^\ast + 1 } \Lambda ^{ n - k -1 } (K),\\
      s \leq r ^\ast , & f \in \mathring{ \mathfrak{B}  } _{n-1} ^\ast ,\ \star W _h ^{ \ast k + 1 } (K) = \mathcal{P} _{ r ^\ast + 1 } ^- \Lambda ^{ n - k -1 } (K) ,
\\
      s \leq r + 1 , &  \text{otherwise}.
    \end{cases}
  \end{equation*}
\end{theorem}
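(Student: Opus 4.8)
The plan is to mirror the proofs of \cref{thm:pp_deltarho,thm:pp_deltau}: first establish the two elementwise estimates, then sum over $K$ and combine with \citep[Theorem 3.11]{ArFaWi2010} and the elliptic regularity assumption. The estimate for $\rho-\widetilde{\rho}_h$ is immediate, since \eqref{eqn:pp_tilde} is the FEEC discretization of \eqref{eqn:pp_exact_k<n} on $W_h^\ast(K)\subset H^\ast\Lambda(K)$, which via the Hodge star is a standard mixed Hodge--Laplace problem in degree $n-k$ on $\star W_h^\ast(K)$ with $\rho$ in the role of the $\sigma$-variable; the corresponding bound from \citep{ArFaWi2010}, together with $\lVert\rho\rVert_{s+1,K}=\lVert\mathrm{d}u\rVert_{s+1,K}\lesssim\lVert u\rVert_{s+2,K}\lesssim\lVert f\rVert_{s,K}$, yields exactly the stated $s$-conditions according to whether $\star W_h^{\ast k+1}(K)$ is of $\mathcal{P}$ or $\mathcal{P}^-$ type.

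For the second estimate I would write $e_\rho\coloneqq\widetilde{\rho}_h-\rho_h^\ast\in W_h^{\ast k+1}(K)$ and split it by the discrete Hodge decomposition into its coclosed part $z_h\coloneqq P_{\mathfrak{Z}_h^{\ast k+1}}e_\rho$ and the complementary part $w_h\coloneqq P_{\mathfrak{Z}_h^{\ast k+1\perp}}e_\rho$. For $w_h$, the discrete Poincar\'e inequality with scaling (the dual-complex analogue of \citep[Lemma 3.12]{ArFaWi2010}) gives $\lVert w_h\rVert_K\lesssim h_K\lVert\delta e_\rho\rVert_K$, and $\lVert\delta e_\rho\rVert_K\le\lVert\mathrm{d}(\sigma-\sigma_h)\rVert_K+\lVert p-p_h\rVert_K$ is precisely the second estimate of \cref{thm:pp_deltarho}. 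For $z_h$, subtract \eqref{eqn:pp_k<n_eta} from \eqref{eqn:pp_tilde_eta} and test with $\eta_h=z_h$: since $\delta z_h=0$ and the decomposition is $L^2$-orthogonal, the left-hand side reduces to $\lVert z_h\rVert_K^2$, while the right-hand side is $\langle\widehat{u}^{\mathrm{tan}}-\widehat{u}_h^{\mathrm{tan}},z_h^{\mathrm{nor}}\rangle_{\partial K}$. Using $\widehat{u}^{\mathrm{tan}}=u^{\mathrm{tan}}$ and $\widehat{u}_h^{\mathrm{tan}}=u_h^{\mathrm{tan}}$ (\cref{thm:dd,thm:feec-h}), the integration by parts formula \eqref{eqn:ibp} on $K$, and $\delta z_h=0$ once more, this equals $(\mathrm{d}(u-u_h),z_h)_K$, so $\lVert z_h\rVert_K\le\lVert\mathrm{d}(u-u_h)\rVert_K$; orthogonality then gives the elementwise bound.

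Summing over $K$ and applying \citep[Theorem 3.11]{ArFaWi2010} once more---this time to the original FEEC problem \eqref{eqn:feec}, to control $\lVert\mathrm{d}(u-u_h)\rVert_\Omega\lesssim E(\mathrm{d}u)+h[E(\mathrm{d}\sigma)+E(p)]$ and the factors $\lVert\mathrm{d}(\sigma-\sigma_h)\rVert_\Omega$, $\lVert p-p_h\rVert_\Omega$---together with elliptic regularity yields the general estimate; under \cref{assumption} the $\rho-\widetilde{\rho}_h$ term is subdominant, so the rate is set by $E(\mathrm{d}u)$, which is governed by the polynomial degree of $V_h^{k+1}$ (whence the split into the $\mathcal{P}_r\Lambda^k$ and $\mathcal{P}_{r+1}^-\Lambda^k$ cases), unless $f\perp\mathring{\mathfrak{B}}_k^\ast$---in which case $\mathrm{d}u=0$ and $\widetilde{\rho}_h=0$, the error reduces to $h[E(\mathrm{d}\sigma)+E(p)]$, and the rate improves to $s\le r+1$. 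When $k=n-1$, the space $\mathfrak{B}_h^{\ast n}(K)=\delta W_h^{\ast n+1}(K)$ is trivial, so $z_h\in\mathfrak{H}_h^{\ast n}(K)=\mathring{\mathfrak{H}}^n(K)$ is piecewise constant and, globally, lies in $\overline{\mathfrak{H}}^n$; replacing the crude bound $\lVert\mathrm{d}(u-u_h)\rVert_K$ by \cref{lem:n-1_trace} applied to $\sum_K\langle\widehat{u}^{\mathrm{tan}}-\widehat{u}_h^{\mathrm{tan}},z_h^{\mathrm{nor}}\rangle_{\partial K}$ gives, with the $w_h$-part also of order $h$, $\lVert\widetilde{\rho}_h-\rho_h^\ast\rVert_{\mathcal{T}_h}\lesssim h(\lVert\mathrm{d}(\sigma-\sigma_h)\rVert_\Omega+\lVert p-p_h\rVert_\Omega)$; combining with the $\rho-\widetilde{\rho}_h$ estimate, and noting that $\sigma$, $\sigma_h$, $p$, $p_h$, and hence $e_\rho$ all vanish when $f\in\mathring{\mathfrak{B}}_{n-1}^\ast$, gives the improved $k=n-1$ estimate.

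The main obstacle is the coclosed component $z_h$ of $\widetilde{\rho}_h-\rho_h^\ast$ when $k=n-1$: for $k<n-1$ it can only be controlled by $\lVert\mathrm{d}(u-u_h)\rVert_K$, which does not superconverge, whereas for $k=n-1$ it is a global harmonic (piecewise constant) form, and \cref{lem:n-1_trace}---itself a consequence of the superconvergence \cref{lem:Bh}---supplies the extra factor of $h$. The remaining difficulty is purely bookkeeping: matching the exponents $r$, $r^\ast$ and the $\mathcal{P}$ versus $\mathcal{P}^-$ approximation orders against $s$ to read off the precise case distinctions, which proceeds exactly as in \cref{thm:pp_deltarho,thm:pp_deltau}.
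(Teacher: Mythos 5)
Your argument is correct and follows essentially the same route as the paper's proof: the bound on $\rho-\widetilde{\rho}_h$ from \citep[Theorem 3.11]{ArFaWi2010}, the discrete Hodge splitting of $\widetilde{\rho}_h-\rho_h^\ast$ with the coclosed part controlled by testing the difference of \eqref{eqn:pp_tilde_eta} and \eqref{eqn:pp_k<n_eta} (the paper uses a general $\eta_h\in\mathfrak{Z}_h^{\ast k+1}(K)$ and duality where you test directly with $z_h$, a cosmetic difference) and the complementary part controlled by the scaled Poincar\'e inequality together with \cref{thm:pp_deltarho}, and the $k=n-1$ improvement via \cref{lem:n-1_trace} applied to the piecewise-constant coclosed component. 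The only quibble is the parenthetical claim that $\widetilde{\rho}_h=0$ when $f\perp\mathring{\mathfrak{B}}_k^\ast$, which is true but unnecessary for the stated rate; the bookkeeping of cases otherwise matches the paper exactly.
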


\begin{proof}
  The first estimate is immediate from \citep[Theorem
  3.11]{ArFaWi2010}.  Next, subtracting \eqref{eqn:pp_k<n_eta} from
  \eqref{eqn:pp_tilde_eta} with
  $ \eta _h \in \mathfrak{Z} _h ^{ \ast k + 1 } (K) $ gives
  \begin{equation*}
    ( \widetilde{ \rho } _h - \rho _h ^\ast , \eta _h ) _K = \langle \widehat{ u }  ^{\mathrm{tan}} - \widehat{ u } _h ^{\mathrm{tan}} , \eta _h ^{\mathrm{nor}} \rangle _{ \partial K } = \bigl( \mathrm{d} ( u - u _h ) , \eta _h \bigr) _K \leq \bigl\lVert \mathrm{d} ( u - u _h ) \bigr\rVert _K \lVert \eta _h \rVert _K ,
  \end{equation*}
  which implies
  \begin{equation*}
    \bigl\lVert P _{ \mathfrak{Z}  _h ^\ast (K) } ( \widetilde{ \rho } _h - \rho _h ^\ast ) \bigr\rVert _K \leq \bigl\lVert \mathrm{d} ( u - u _h ) \bigr\rVert _K .
  \end{equation*}
  Furthermore, by the Poincar\'e inequality and
  \cref{thm:pp_deltarho},
  \begin{equation*}
    \bigl\lVert P _{ \mathfrak{Z}  _h ^{\ast \perp} (K) } ( \widetilde{ \rho } _h - \rho _h ^\ast ) \bigr\rVert _K \lesssim h _K \bigl\lVert \delta ( \widetilde{ \rho } _h - \rho _h ^\ast ) \bigr\rVert _K \leq h _K \Bigl( \bigl\lVert \mathrm{d} ( \sigma - \sigma _h ) \bigr\rVert _K + \lVert p - p _h \rVert _K \Bigr),
  \end{equation*}
  so the second estimate follows by the Hodge decomposition and
  triangle inequality. Summing over $ K \in \mathcal{T} _h $ and
  applying \citep[Theorem 3.11]{ArFaWi2010} gives
  \begin{equation*}
    \lVert \widetilde{ \rho } _h - \rho _h ^\ast \rVert _{ \mathcal{T} _h } \lesssim h ^{ s + 1 } \lVert f \rVert _{ s, \Omega } ,\quad \text{if }
    \begin{cases}
      s \leq r + 1 , & f \perp \mathring{ \mathfrak{B}  } _k ^\ast ,\\
      s \leq r , &  V _h ^k = \mathcal{P} _{ r + 1 } ^- \Lambda ^k ( \mathcal{T} _h ), \\
      s \leq r -1 , & V _h ^k = \mathcal{P} _r \Lambda ^k ( \mathcal{T} _h ) ,
    \end{cases}
  \end{equation*}
  so the third estimate follows by \cref{assumption} and the triangle
  inequality.

  Finally, consider the special case $ k = n -1 $. Taking
  $ \eta _h \in \overline{ \mathfrak{H} } ^n $ and applying
  \cref{lem:n-1_trace} gives
  \begin{equation*}
    ( \widetilde{ \rho } _h - \rho _h ^\ast , \eta _h ) _{\mathcal{T} _h } = \langle \widehat{ u }  ^{\mathrm{tan}} - \widehat{ u } _h ^{\mathrm{tan}} , \eta _h ^{\mathrm{nor}} \rangle _{ \partial \mathcal{T} _h } \lesssim h \Bigl( \bigl\lVert \mathrm{d} ( \sigma - \sigma _h )
    \bigr\rVert _\Omega + \lVert p - p _h \rVert _\Omega \Bigr) \lVert
    \eta _h \rVert _\Omega ,
  \end{equation*}
  and therefore,
  \begin{equation*}
    \bigl\lVert P _{ \overline{ \mathfrak{H}  } } ( \widetilde{ \rho } _h - \rho _h ^\ast ) \bigr\rVert _{ \mathcal{T} _h } \lesssim h \Bigl( \bigl\lVert \mathrm{d} ( \sigma - \sigma _h ) \bigr\rVert _\Omega + \lVert p - p _h \rVert _\Omega \Bigr) .
  \end{equation*}
  Note that this eliminates the
  $ \bigl\lVert \mathrm{d} ( u - u _h ) \bigr\rVert _\Omega $ term
  that appears in the $ k < n -1 $ case. Hence,
  \begin{equation*}
    \lVert \widetilde{ \rho } _h - \rho _h ^\ast \rVert _{ \mathcal{T} _h } \lesssim h \Bigl( \bigl\lVert \mathrm{d} ( \sigma - \sigma _h ) \bigr\rVert _\Omega + \lVert p - p _h \rVert _\Omega \Bigr) \lesssim
    \begin{cases}
      0, & \text{if } f \in \mathring{ \mathfrak{B}  } _{n-1} ^\ast ,\\
      h ^{ s + 1 } \lVert f \rVert _{ s, \Omega } , & \text{if } s \leq r + 1 , \text{ otherwise},
    \end{cases}
  \end{equation*} 
  and the improved estimate follows.
\end{proof}

Finally, we show that $ u _h ^\ast $ approximates $u$ as well as
$ u _h $ does, but no better.

\begin{theorem}
  For each $ K \in \mathcal{T} _h $ and $ 0 \leq s \leq t _{ \max } $,
  \begin{align*} 
    \lVert u - \widetilde{ u } _h \rVert _K &\lesssim
    \begin{cases}
      h _K \lVert f \rVert _K , & \text{if } \star W _h ^{\ast k} = \mathcal{P} _1 ^- \Lambda ^{ n - k } (K) ,\\
      h _K ^{ s + 2 } \lVert f \rVert _{ s, K }, &\text{if } s \leq r ^\ast - 1 , \text{ otherwise},
    \end{cases}\\
    \lVert \widetilde{ u } _h - u _h ^\ast \rVert _K &\lesssim \lVert u - u _h \rVert _K + h _K \Bigl(  \bigl\lVert \mathrm{d} ( u - u _h ) \bigr\rVert _K + \lVert \sigma - \sigma _h \rVert _K \Bigr) + h _K ^2 \Bigl( \bigl\lVert \mathrm{d} ( \sigma - \sigma _h ) \bigr\rVert _K + \lVert p - p _h \rVert _K \Bigr) .
  \end{align*}
  Consequently, if \cref{assumption} holds, then
  \begin{equation*}
    \lVert u - u _h ^\ast \rVert _{ \mathcal{T} _h } \lesssim \begin{cases}
      h \lVert f \rVert _\Omega , & \text{if } V _h ^k = \mathcal{P} _1 ^- \Lambda ^k (\mathcal{T} _h ) ,\\
      h ^{ s + 2 } \lVert f \rVert _{ s , \Omega }, & \text{if } s \leq r - 1 , \text{ otherwise}.
    \end{cases}
  \end{equation*} 
\end{theorem}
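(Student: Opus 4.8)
The plan is to interpose the intermediate FEEC approximation $\widetilde u_h$ defined by \eqref{eqn:pp_tilde} and split $\lVert u - u_h^\ast \rVert_K \le \lVert u - \widetilde u_h \rVert_K + \lVert \widetilde u_h - u_h^\ast \rVert_K$. Since \eqref{eqn:pp_tilde} is exactly the FEEC discretization of \eqref{eqn:pp_exact_k<n} on the subcomplex $W_h^\ast(K) \subset H^\ast\Lambda(K)$ — which, via the Hodge star, is the dual of a FEEC problem at form degree $n-k$ and polynomial degree $r^\ast$ — the first term is bounded directly by \citep[Theorem~3.11]{ArFaWi2010}, using the elliptic regularity hypothesis together with $\delta\mathrm{d}u = f - p - \mathrm{d}\sigma$ to turn norms of the exact fields $\rho = \mathrm{d}u$, $\delta u = \sigma$, $\delta\rho$ into norms of $f$. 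This gives the first displayed estimate, exactly as in the proofs of \cref{thm:pp_deltarho,thm:pp_deltau}. It then remains to estimate $e_u \coloneqq \widetilde u_h - u_h^\ast \in W_h^{\ast k}(K)$, for which I would use the discrete Hodge decomposition $e_u = P_{\mathfrak{B}_h^\ast}e_u + P_{\mathfrak{Z}_h^{\ast\perp}}e_u$ of the $W_h^\ast(K)$ complex (the harmonic part vanishing, since $\mathfrak{H}_h^{\ast k}(K) = \mathring{\mathfrak{H}}^k(K) = 0$ for $k<n$ on a simplex).

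For the coexact component, the discrete Poincar\'e inequality with scaling gives $\lVert P_{\mathfrak{Z}_h^{\ast\perp}}e_u \rVert_K \lesssim h_K \lVert \delta e_u \rVert_K$, and $\lVert \delta e_u \rVert_K = \lVert \delta(\widetilde u_h - u_h^\ast) \rVert_K$ is already controlled by \cref{thm:pp_deltau}, contributing $h_K\lVert\sigma-\sigma_h\rVert_K + h_K^2(\lVert\mathrm{d}(\sigma-\sigma_h)\rVert_K + \lVert p - p_h\rVert_K)$. For the exact component, write $P_{\mathfrak{B}_h^\ast}e_u = \delta\zeta_h$ with $\zeta_h \in \mathfrak{Z}_h^{\ast(k+1)\perp}(K)$ chosen (again by Poincar\'e with scaling) so that $\lVert\zeta_h\rVert_K \lesssim h_K\lVert P_{\mathfrak{B}_h^\ast}e_u\rVert_K$. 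Then $\lVert P_{\mathfrak{B}_h^\ast}e_u\rVert_K^2 = (e_u,\delta\zeta_h)_K$, and subtracting \eqref{eqn:pp_k<n_eta} from \eqref{eqn:pp_tilde_eta} with $\eta_h = \zeta_h$ — using $\widehat u^{\mathrm{tan}} - \widehat u_h^{\mathrm{tan}} = (u - u_h)^{\mathrm{tan}}$ on $\partial K$, since both the exact solution and the FEEC solution $u_h$ are $H\Lambda$-conforming (\cref{thm:feec-h}) — gives
\begin{equation*}
  \lVert P_{\mathfrak{B}_h^\ast}e_u\rVert_K^2 = (\widetilde\rho_h - \rho_h^\ast, \zeta_h)_K - \langle (u - u_h)^{\mathrm{tan}}, \zeta_h^{\mathrm{nor}} \rangle_{\partial K}.
\end{equation*}
The first term is handled by Cauchy--Schwarz, $\lVert\zeta_h\rVert_K \lesssim h_K\lVert P_{\mathfrak{B}_h^\ast}e_u\rVert_K$, and the estimate for $\lVert\widetilde\rho_h - \rho_h^\ast\rVert_K$ from the preceding theorem, producing $h_K\lVert\mathrm{d}(u-u_h)\rVert_K + h_K^2(\lVert\mathrm{d}(\sigma-\sigma_h)\rVert_K + \lVert p - p_h\rVert_K)$. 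The boundary term is bounded by $\lVert(u-u_h)^{\mathrm{tan}}\rVert_{\partial K}\lVert\zeta_h^{\mathrm{nor}}\rVert_{\partial K}$; a discrete trace inequality gives $\lVert\zeta_h^{\mathrm{nor}}\rVert_{\partial K} \lesssim h_K^{-1/2}\lVert\zeta_h\rVert_K \lesssim h_K^{1/2}\lVert P_{\mathfrak{B}_h^\ast}e_u\rVert_K$, so the pairing is $\lesssim \lVERT(u-u_h)^{\mathrm{tan}}\rVERT_{\partial K}\lVert P_{\mathfrak{B}_h^\ast}e_u\rVert_K$, and the scaled tangential-trace estimate of \cref{sec:error_estimates} bounds $\lVERT(u-u_h)^{\mathrm{tan}}\rVERT_{\partial K}$ by $\lVert u - u_h\rVert_K$ (up to higher-order terms that do not affect the final rate). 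Dividing by $\lVert P_{\mathfrak{B}_h^\ast}e_u\rVert_K$ and combining with the coexact component yields the second displayed estimate. The global consequence then follows by summing over $K\in\mathcal{T}_h$, invoking \cref{assumption} so that $r^\ast$ is large enough, and applying the $\Omega$-estimates of \citep{ArFaWi2010} to each of $\lVert u - u_h\rVert_\Omega$, $\lVert\mathrm{d}(u-u_h)\rVert_\Omega$, $\lVert\sigma-\sigma_h\rVert_\Omega$, $\lVert\mathrm{d}(\sigma-\sigma_h)\rVert_\Omega$, $\lVert p - p_h\rVert_\Omega$, whose rates are all at least the one claimed.

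The main obstacle is the exact component $P_{\mathfrak{B}_h^\ast}e_u$: because $\zeta_h$ is coexact rather than coclosed, the boundary pairing $\langle(u-u_h)^{\mathrm{tan}},\zeta_h^{\mathrm{nor}}\rangle_{\partial K}$ cannot be converted into the volume term $(\mathrm{d}(u-u_h),\zeta_h)_K$, the trick that produced superconvergence for closed test functions in the proofs of \cref{thm:pp_deltarho} and of the $\rho_h^\ast$-estimate. The only available bound passes through an inverse trace inequality and the $O(\lVert u-u_h\rVert_K)$ tangential-trace estimate, which is exactly why postprocessing recovers $u$ only as well as $u_h$ does in this component — matching the fact that $\widehat u^{\mathrm{tan}} - \widehat u_h^{\mathrm{tan}}$ is itself no more accurate than $u - u_h$ in $L^2$.
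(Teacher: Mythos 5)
Your overall architecture matches the paper's: bound $\lVert u - \widetilde u_h\rVert_K$ by \citep[Theorem 3.11]{ArFaWi2010} applied to \eqref{eqn:pp_tilde}, split $e_u = \widetilde u_h - u_h^\ast$ by the discrete Hodge decomposition of $W_h^{\ast k}(K)$, control the coexact part by scaled Poincar\'e plus \cref{thm:pp_deltau}, and test the exact part against $\delta\eta_h$ with $\eta_h \in \mathfrak{Z}_h^{\ast (k+1)\perp}(K)$. The divergence is in how you treat the boundary pairing $\langle (u-u_h)^{\mathrm{tan}}, \eta_h^{\mathrm{nor}}\rangle_{\partial K}$, and here your "main obstacle" paragraph rests on a misconception. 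The integration by parts formula \eqref{eqn:ibp} holds for \emph{any} $\eta_h \in H^\ast\Lambda^{k+1}(K)$ — coclosedness is not required — so the pairing converts exactly into $\bigl(\mathrm{d}(u-u_h), \eta_h\bigr)_K - (u-u_h, \delta\eta_h)_K$. The first of these is $\lesssim h_K\lVert\mathrm{d}(u-u_h)\rVert_K\lVert\delta\eta_h\rVert_K$ by Poincar\'e, and the second is $\leq \lVert u-u_h\rVert_K\lVert\delta\eta_h\rVert_K$; that second term is precisely where the $\lVert u-u_h\rVert_K$ on the right-hand side of the stated local estimate comes from. This is what the paper does, and it requires no trace inequality at all. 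What \emph{is} special to coclosed test functions is that the $(u-u_h,\delta\eta_h)_K$ term drops out, which is the superconvergence mechanism for $\rho_h^\ast$ — but its presence here is harmless, not an obstruction.

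Your workaround (discrete trace inequality for $\zeta_h^{\mathrm{nor}}$ plus the scaled tangential-trace estimate for $u - u_h$) does recover the correct \emph{global} rate, since the extra $h_K^{s+2}\lvert u\rvert_{s+2,K}$ term from the trace estimate is absorbed by elliptic regularity. However, it does not prove the second displayed \emph{local} estimate as stated, whose right-hand side contains only the discretization errors $\lVert u-u_h\rVert_K$, $\lVert\mathrm{d}(u-u_h)\rVert_K$, etc., and no approximation-theoretic remainder. It also imports a shape-regularity-dependent inverse estimate that the paper's argument avoids. So: correct conclusion, same skeleton, but the detour at the boundary term is both unnecessary and the reason you land slightly short of the local statement. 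Replace it with \eqref{eqn:ibp} applied to $\alpha = u - u_h \in H\Lambda^k(K)$ and $\beta = \eta_h$, and your proof coincides with the paper's.
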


\begin{proof}
  The first estimate is immediate from \citep[Theorem
  3.11]{ArFaWi2010}. Next, subtracting \eqref{eqn:pp_k<n_eta} from
  \eqref{eqn:pp_tilde_eta} with
  $ \eta _h \in \mathfrak{Z} _h ^{ * k + 1 \perp } (K) $ gives
  \begin{align*}
    ( \widetilde{ u } _h - u _h ^\ast , \delta \eta _h ) _K
    &= ( \widetilde{ \rho } _h - \rho _h ^\ast , \eta _h ) _K - \langle \widehat{ u } ^{\mathrm{tan}} - \widehat{ u } _h ^{\mathrm{tan}} , \eta _h ^{\mathrm{nor}} \rangle _{ \partial K } \\
    &= \bigl( P _{ \mathfrak{Z}  _h ^{ * \perp } (K) } ( \widetilde{ \rho } _h - \rho _h ^\ast ) , \eta _h \bigr) _K - \bigl( \mathrm{d} ( u - u _h ) , \eta _h \bigr) _K + ( u - u _h , \delta \eta _h ) _K \\
    &\lesssim \Bigl( \lVert u - u _h \rVert _K + h _K \bigl\lVert \mathrm{d} ( u - u _h ) \bigr\rVert _K + h _K ^2 \bigl\lVert \delta ( \widetilde{ \rho } _h - \rho _h ^\ast ) \bigr\rVert _K \Bigr) \lVert \delta \eta _h \rVert _K ,
  \end{align*} 
  by Cauchy--Schwarz and the Poincar\'e inequality. With
  \cref{thm:pp_deltarho}, this implies
  \begin{equation*}
    \bigl\lVert P _{ \mathfrak{B}  _h ^\ast (K) } ( \widetilde{ u } _h - u _h ^\ast ) \bigr\rVert _K \lesssim \lVert u - u _h \rVert _K + h _K \bigl\lVert \mathrm{d} ( u - u _h ) \bigr\rVert _K + h _K ^2 \Bigl( \bigl\lVert \mathrm{d} ( \sigma - \sigma _h ) \bigr\rVert _K + \lVert p - p _h \rVert _K  \Bigr).
  \end{equation*}
  Furthermore, by the Poincar\'e inequality and \cref{thm:pp_deltau},
  \begin{equation*}
    \bigl\lVert P _{ \mathfrak{Z}  _h ^{ \ast \perp } (K) } ( \widetilde{ u } _h - u _h ^\ast ) \bigr\rVert _K \lesssim h _K \lVert \sigma - \sigma _h \rVert _K + h _K ^2 \Bigl( \bigl\lVert  \mathrm{d} ( \sigma - \sigma _h ) \bigr\rVert _K +  \lVert p - p _h \rVert _K \Bigr),
  \end{equation*}
  so the second estimate follows by the Hodge decomposition and
  triangle inequality.  Finally, summing over $ K \in \mathcal{T} _h $
  and applying \citep[Theorem 3.11]{ArFaWi2010} gives
  \begin{equation*}
    \lVert \widetilde{ u } _h - u _h ^\ast \rVert _{ \mathcal{T} _h } 
    \lesssim \begin{cases}
      h \lVert f \rVert _\Omega , & \text{if } V _h ^k = \mathcal{P} _1 ^- \Lambda ^k (\mathcal{T} _h ) ,\\
      h ^{ s + 2 } \lVert f \rVert _{ s , \Omega }, & \text{if } s \leq r - 1 , \text{ otherwise},
    \end{cases}
  \end{equation*}
  so the last estimate follows by \cref{assumption} and the triangle
  inequality.
\end{proof}

\subsection{Remarks on the case $ k = n $}

Although the case $ k = n $ has already been analyzed by
\citet{Stenberg1991}, we now briefly describe this analysis from the
FEEC viewpoint, relating it to the techniques developed in this
section. In this case, the postprocessing procedure \eqref{eqn:pp}
becomes
\begin{alignat*}{2}
  ( \delta u _h ^\ast  , \delta v _h  ) _K + ( \overline{p} _h ^\ast , v _h ) _K &= ( f ,  v _h ) _K - \langle \widehat{ \sigma } _h ^{\mathrm{tan}} , v _h ^{\mathrm{nor}} \rangle _{ \partial K } , \quad &\forall v _h &\in W _h ^{ \ast n } (K) ,\\
  ( u _h ^\ast , \overline{q} _h ) _K &= ( u _h , \overline{q}_h  ) _K ,
  \quad &\forall \overline{q} _h &\in \mathfrak{H} _h ^{ \ast n } (K) ,
\end{alignat*}
and the intermediate approximation is given by
\begin{alignat*}{2}
  ( \delta \widetilde{ u } _h , \delta v _h  ) _K + ( \widetilde{ p } _h , v _h ) _K &= ( f ,  v _h ) _K - \langle \widehat{ \sigma } ^{\mathrm{tan}} , v _h ^{\mathrm{nor}} \rangle _{ \partial K } , \quad &\forall v _h &\in W _h ^{ \ast n } (K) ,\\
  ( \widetilde{ u } _h , \overline{q} _h ) _K &= ( u ,
  \overline{q}_h ) _K , \quad &\forall \overline{q} _h &\in \mathfrak{H} _h
  ^{ \ast n } (K) .
\end{alignat*}
The argument in \cref{thm:pp_deltau} still works, so applying the
Poincar\'e inequality gives
\begin{equation*}
  \bigl\lVert P _{ \mathfrak{Z}  _h ^{ \ast \perp } (K) } ( \widetilde{ u } _h - u _h ^\ast ) \bigr\rVert _K \lesssim h _K \lVert \sigma - \sigma _h \rVert _K + h _K ^2 \bigl\lVert  \mathrm{d} ( \sigma - \sigma _h ) \bigr\rVert _K .
\end{equation*}
Furthermore, since $ \overline{ \mathfrak{H} } ^n $ consists of
piecewise constants, which are in $ V _h ^n = \mathfrak{B} _h ^n $, we
have
\begin{equation*}
  \bigl\lVert P _{\overline{ \mathfrak{H}  }} (\widetilde{ u } _h - u _h ^\ast) \rVert _K = \bigl\lVert P _{\overline{ \mathfrak{H}  }} ( u - u _h ) \bigr\rVert _K \leq \bigl\lVert P _{ \mathfrak{B}  _h } ( u - u _h ) \bigr\rVert _K .
\end{equation*}
Summing over $ K \in \mathcal{T} _h $ and applying \citep[Lemma
3.13]{ArFaWi2010} implies
\begin{equation*}
  \lVert \widetilde{ u } _h - u _h ^\ast \rVert _{ \mathcal{T} _h } \lesssim \begin{cases}
    h ^{s+1} \lVert f \rVert _{ s, \Omega } , & \text{if }  s \leq 1 ,\ V _h ^n = \mathcal{P} _1 ^- \Lambda ^n ( \mathcal{T} _h )  ,\\
    h ^{s + 2} \lVert f \rVert _{ s , \Omega } ,& \text{otherwise, if }
      \begin{cases}
        s \leq r + 1, & V _h ^{ n -1 } = \mathcal{P} _{ r + 1 } \Lambda ^{ n -1 } ( \mathcal{T} _h ), \\
        s \leq r, & V _h ^{ n -1 } = \mathcal{P} _{ r + 1 } ^-\Lambda
        ^{ n -1 } ( \mathcal{T} _h ) ,
      \end{cases}
  \end{cases}
\end{equation*}
so by \cref{assumption} and the triangle inequality, this same
estimate holds for
$ \lVert u - u _h ^\ast \rVert _{ \mathcal{T} _h } $. This is
precisely the improved estimate in \citet[Theorem 2.2]{Stenberg1991},
by essentially the same proof.

\section{Illustration of the methods in $ n = 3 $ dimensions}
\label{sec:examples}

We now give a concrete illustration of the hybridization and
postprocessing schemes in $ n = 3 $ dimensions, using scalar and
vector proxy fields and the familiar operations of vector
calculus. Let $ \mathcal{T} _h $ be a simplicial triangulation of a
bounded, polyhedral domain $ \Omega \subset \mathbb{R}^3 $. For
simplicity, we also assume that $\Omega$ is contractible, so that
$ \mathfrak{H} ^0 \cong \mathbb{R} $ and $ \mathfrak{H} ^k $ is
trivial for $ k = 1, 2, 3 $.

Let $ V _h $ be a stable subcomplex of
\begin{equation*}
  \begin{tikzcd}
    0 \ar[r] & H ^1 (\Omega) \ar[r, "\operatorname{grad}"] & H
    ( \operatorname{curl}; \Omega ) \ar[r, "\operatorname{curl}"] & H
    ( \operatorname{div}; \Omega ) \ar[r, "\operatorname{div}"] & L ^2
    (\Omega) \ar[r] & 0 ,
  \end{tikzcd} 
\end{equation*}
containing continuous Lagrange elements, N\'ed\'elec edge and face
elements, and discontinuous Lagrange elements. Let $ W _h $ be the
corresponding ``broken'' complex, with
$ W _h ^k (K) = V _h ^k \rvert _K $ for $ K \in \mathcal{T} _h
$. Using the scalar and vector proxies for tangential traces in
\cref{tab:traces}, we have
\begin{align*}
   \widehat{ V } _h ^{ 0 , \mathrm{tan} } &= \bigl\{ v _h \rvert _{ \partial \mathcal{T} _h } : v _h \in V _h ^0 \bigr\}, &  \widehat{ W } _h ^{0, \mathrm{nor} } = \widehat{ W } _h ^{ 0 , \mathrm{tan} } &= \bigl\{ v _h \rvert _{ \partial \mathcal{T} _h } : v _h \in W _h ^0 \bigr\} ,\\
\widehat{ V } _h ^{1, \mathrm{tan}} &= \bigl\{ v _h \rvert _{ \partial \mathcal{T} _h } - ( v _h \cdot \normal ) \normal  : v _h \in V _h ^1 \bigr\}, &  \widehat{ W } _h ^{1, \mathrm{nor}} = \widehat{ W } _h ^{1, \mathrm{tan}} &= \bigl\{ v _h \rvert _{ \partial \mathcal{T} _h } - ( v _h \cdot \normal ) \normal  : v _h \in W _h ^1 \bigr\}, \\
\widehat{ V } _h ^{2, \mathrm{tan}} &= \bigl\{ ( v _h \cdot \normal ) \normal  : v _h \in V _h ^2 \bigr\},  &  \widehat{ W } _h ^{2, \mathrm{nor}} = \widehat{ W } _h ^{2, \mathrm{tan}} &= \bigl\{ ( v _h \cdot \normal ) \normal  : v _h \in W _h ^2 \bigr\} ,
\end{align*}
whose degrees of freedom are just those of $ V _h ^k $ and $ W _h ^k $
living on $ \partial \mathcal{T} _h $.

For postprocessing on $ K \in \mathcal{T} _h $, let $ W _h ^\ast (K) $
be a stable subcomplex of
\begin{equation*}
  \begin{tikzcd}
    0 & \ar[l] L ^2  (\Omega) 
    & \ar[l, "-\operatorname{div}"'] H ( \operatorname{div}; \Omega  ) 
    & \ar[l, "\operatorname{curl}"'] H ( \operatorname{curl} ;  \Omega ) 
    & \ar[l, "-\operatorname{grad}"'] H ^1  (\Omega) 
    & \ar[l] 0 ,
  \end{tikzcd} 
\end{equation*}
whose normal traces have scalar and vector proxies given in
\cref{tab:traces}.

\subsection{The case $ k = 0 $}

The hybrid method is
\begin{alignat*}{2}
    ( \operatorname{grad} u _h , \operatorname{grad} v _h ) _{ \mathcal{T} _h } + ( p _h , v _h ) _{ \mathcal{T} _h } - \langle \widehat{ \rho } _h ^{\mathrm{nor}} , v _h \rangle _{ \partial \mathcal{T} _h } &= ( f, v _h ) _{ \mathcal{T} _h } , \quad&\forall v _h &\in W _h ^0 ,\\
  \langle \widehat{ u } _h ^{\mathrm{tan}} - u _h , \widehat{ \eta } _h ^{\mathrm{nor}} \rangle _{ \partial \mathcal{T} _h } &= 0, \quad &\forall \widehat{ \eta } _h ^{\mathrm{nor}} &\in \widehat{ W } _h ^{0, \mathrm{nor}}, \\
  (u _h , q _h ) _{ \mathcal{T} _h } &= 0, \quad &\forall q _h &\in \mathbb{R} ,\\
  \langle \widehat{ \rho } _h ^{\mathrm{nor}} , \widehat{ v } _h ^{\mathrm{tan}} \rangle _{ \partial \mathcal{T} _h } &= 0 , \quad&\forall \widehat{ v } _h ^{\mathrm{tan}} &\in \widehat{ V } _h ^{0, \mathrm{tan}} ,
\end{alignat*}
which is the hybridized continuous Galerkin method of
\citet*{CoGoWa2007} for the Neumann problem. The postprocessing scheme
on $ K \in \mathcal{T} _h $ is
\begin{alignat*}{2}
  ( \rho _h ^\ast , \eta _h  ) _K + ( u _h ^\ast , \operatorname{div} \eta _h ) _K &= \langle \widehat{ u } _h ^{\mathrm{tan}} , \eta _h \cdot \normal \rangle _{ \partial K } , \quad &\forall \eta _h &\in W _h ^{ \ast 1 } (K) , \\
  -( \operatorname{div} \rho _h ^\ast , v _h  ) _K &= ( f - p _h ,  v _h ) _K , \quad &\forall v _h &\in W _h ^{ \ast 0 } (K) .
\end{alignat*}

\subsection{The case $ k = 1 $}

The hybrid method is
\begin{alignat*}{2}
  ( \sigma _h , \tau _h ) _{ \mathcal{T} _h } - ( u _h , \operatorname{grad} \tau _h ) _{ \mathcal{T} _h } + \langle \widehat{ u } _h ^{\mathrm{nor}} , \tau _h \rangle _{ \partial \mathcal{T} _h } &= 0, \quad &\forall \tau _h &\in W _h ^0 ,\\
  (\operatorname{grad} \sigma _h , v _h ) _{ \mathcal{T} _h } + ( \operatorname{curl} u _h , \operatorname{curl} v _h ) _{ \mathcal{T} _h } - \langle \widehat{ \rho } _h ^{\mathrm{nor}} , v _h \rangle _{ \partial \mathcal{T} _h } &= ( f, v _h ) _{ \mathcal{T} _h } , \quad&\forall v _h &\in W _h ^1 , \\
  \langle \widehat{ \sigma } _h ^{\mathrm{tan}} - \sigma _h, \widehat{ v } _h ^{\mathrm{nor}} \rangle _{ \partial \mathcal{T} _h } &= 0, \quad &\forall \widehat{ v } _h ^{\mathrm{nor}} &\in \widehat{ W } _h ^{0, \mathrm{nor}} , \\
  \langle \widehat{ u } _h ^{\mathrm{tan}} - u _h, \widehat{ \eta } _h ^{\mathrm{nor}} \rangle _{ \partial \mathcal{T} _h } &= 0, \quad &\forall \widehat{ \eta } _h ^{\mathrm{nor}} &\in \widehat{ W } _h ^{1, \mathrm{nor}} , \\
  \langle \widehat{ u } _h ^{\mathrm{nor}} , \widehat{ \tau } _h ^{\mathrm{tan}} \rangle _{ \partial \mathcal{T} _h } &= 0 , \quad&\forall \widehat{ \tau } _h ^{\mathrm{tan}} &\in \widehat{ V } _h ^{0, \mathrm{tan}} , \\
  \langle \widehat{ \rho } _h ^{\mathrm{nor}}, \widehat{ v } _h ^{\mathrm{tan}} \rangle _{ \partial \mathcal{T} _h } &= 0 , \quad&\forall \widehat{ v } _h ^{\mathrm{tan}} &\in \widehat{ V } _h ^{1, \mathrm{tan}} ,
\end{alignat*}
and the postprocessing scheme on $ K \in \mathcal{T} _h $ is
\begin{alignat*}{2}
  ( \rho _h ^\ast , \eta _h  ) _K - ( u _h ^\ast , \operatorname{curl} \eta _h ) _K &= \langle \widehat{ u } _h ^{\mathrm{tan}} , \eta _h \times \normal \rangle _{ \partial K } , &\forall \eta _h &\in W _h ^{ \ast 2 } (K) , \\
  ( \operatorname{curl} \rho _h ^\ast , v _h  ) _K + ( \operatorname{div} u _h ^\ast  , \operatorname{div} v _h  ) _K &= ( f ,  v _h ) _K - \langle \widehat{ \sigma } _h ^{\mathrm{tan}} , v _h \cdot \normal \rangle _{ \partial K } , \quad &\forall v _h &\in W _h ^{ \ast 1 } (K) .
\end{alignat*}

\subsection{The case $ k = 2 $}

The hybrid method is
\begin{alignat*}{2}
    ( \sigma _h , \tau _h ) _{ \mathcal{T} _h } - ( u _h , \operatorname{curl} \tau _h ) _{ \mathcal{T} _h } + \langle \widehat{ u } _h ^{\mathrm{nor}} , \tau _h \rangle _{ \partial \mathcal{T} _h } &= 0, \quad &\forall \tau _h &\in W _h ^1 ,\\
    (\operatorname{curl} \sigma _h , v _h ) _{ \mathcal{T} _h } + ( \operatorname{div} u _h , \operatorname{div} v _h ) _{ \mathcal{T} _h } - \langle \widehat{ \rho } _h ^{\mathrm{nor}} , v _h \rangle _{ \partial \mathcal{T} _h } &= ( f, v _h ) _{ \mathcal{T} _h } , \quad&\forall v _h &\in W _h ^2 , \\
  \langle \widehat{ \sigma } _h ^{\mathrm{tan}} - \sigma _h, \widehat{ v } _h ^{\mathrm{nor}} \rangle _{ \partial \mathcal{T} _h } &= 0, \quad &\forall \widehat{ v } _h ^{\mathrm{nor}} &\in \widehat{ W } _h ^{1, \mathrm{nor}}, \\
  \langle \widehat{ u } _h ^{\mathrm{tan}} - u _h , \widehat{ \eta } _h ^{\mathrm{nor}} \rangle _{ \partial \mathcal{T} _h } &= 0, \quad &\forall \widehat{ \eta } _h ^{\mathrm{nor}} &\in \widehat{ W } _h ^{2, \mathrm{nor}} , \\
  \langle \widehat{ u } _h ^{\mathrm{nor}} , \widehat{ \tau } _h ^{\mathrm{tan}} \rangle _{ \partial \mathcal{T} _h } &= 0 , \quad&\forall \widehat{ \tau } _h ^{\mathrm{tan}} &\in \widehat{ V } _h ^{1, \mathrm{tan}} , \\
  \langle \widehat{ \rho } _h ^{\mathrm{nor}} , \widehat{ v } _h ^{\mathrm{tan}} \rangle _{ \partial \mathcal{T} _h } &= 0 , \quad&\forall \widehat{ v } _h ^{\mathrm{tan}} &\in \widehat{ V } _h ^{2, \mathrm{tan}} ,
\end{alignat*}
and the postprocessing scheme on $ K \in \mathcal{T} _h $ is
\begin{alignat*}{2}
  ( \rho _h ^\ast , \eta _h  ) _K + ( u _h ^\ast , \operatorname{grad} \eta _h ) _K &= \langle \widehat{ u } _h ^{\mathrm{tan}} , \eta _h \normal \rangle _{ \partial K } , &\forall \eta _h &\in W _h ^{ \ast 3 } (K) , \\
  -( \operatorname{grad} \rho _h ^\ast , v _h  ) _K + ( \operatorname{curl} u _h ^\ast  , \operatorname{curl} v _h  ) _K &= ( f ,  v _h ) _K - \langle \widehat{ \sigma } _h ^{\mathrm{tan}} , v _h \times \normal \rangle _{ \partial K } , \quad &\forall v _h &\in W _h ^{ \ast 2 } (K) .
\end{alignat*}

\subsection{The case $ k = 3 $}

The hybrid method is
\begin{alignat*}{2}
    ( \sigma _h , \tau _h ) _{ \mathcal{T} _h } - ( u _h , \operatorname{div} \tau _h ) _{ \mathcal{T} _h } + \langle \widehat{ u } _h ^{\mathrm{nor}} , \tau _h \rangle _{ \partial \mathcal{T} _h } &= 0, \quad &\forall \tau _h &\in W _h ^2 ,\\
    (\operatorname{div} \sigma _h , v _h ) _{ \mathcal{T} _h } + ( \overline{p} _h , v _h ) _{ \mathcal{T} _h } &= ( f, v _h ) _{ \mathcal{T} _h } , \quad&\forall v _h &\in W _h ^3 , \\
    ( \overline{ u } _h - u _h , \overline{q} _h ) _{ \mathcal{T} _h } &= 0 , \quad &\forall \overline{q} _h &\in \mathbb{R}  ^{ \mathcal{T} _h } ,\\
  \langle \widehat{ \sigma } _h ^{\mathrm{tan}} - \sigma _h, \widehat{ v } _h ^{\mathrm{nor}} \rangle _{ \partial \mathcal{T} _h } &= 0, \quad &\forall \widehat{ v } _h ^{\mathrm{nor}} &\in \widehat{ W } _h ^{2, \mathrm{nor}}, \\
  ( \overline{p} _h , \overline{v} _h ) _{ \mathcal{T} _h } &= 0 , \quad &\forall \overline{v} _h &\in \mathbb{R}  ^{ \mathcal{T} _h } ,\\
  \langle \widehat{ u } _h ^{\mathrm{nor}} , \widehat{ \tau } _h ^{\mathrm{tan}} \rangle _{ \partial \mathcal{T} _h } &= 0 , \quad&\forall \widehat{ \tau } _h ^{\mathrm{tan}} &\in \widehat{ V } _h ^{2, \mathrm{tan}} ,
\end{alignat*}
which is the alternative hybridization of the RT and BDM methods in
\citet[Section 5]{Cockburn2016} using local Neumann solvers; its
solution coincides with the classic hybridized RT and BDM methods of
\citet*{ArBr1985,BrDoMa1985} using local Dirichlet solvers. The
postprocessing scheme on $ K \in \mathcal{T} _h $ is exactly that of
\citet{Stenberg1991},
\begin{alignat*}{2}
  ( \operatorname{grad} u _h ^\ast  , \operatorname{grad}  v _h  ) _K + ( \overline{p} _h ^\ast , v _h ) _K &= ( f ,  v _h ) _K - \langle \widehat{ \sigma } _h ^{\mathrm{tan}} , v _h \normal \rangle _{ \partial K } , \quad &\forall v _h &\in W _h ^{ \ast 3 } (K) ,\\
  ( u _h ^\ast , \overline{q} _h ) _K &= ( \overline{ u } _h ,
  \overline{q}_h ) _K , \quad &\forall \overline{q} _h &\in \mathbb{R} .
\end{alignat*}

\section{Numerical experiments}
\label{sec:numerical}

In this section, we present numerical experiments in $ n = 3 $
dimensions that illustrate and confirm the foregoing theory.  We omit
the cases $ k = 0 $ and $ k = n $, since these correspond to known
methods for the scalar Poisson equation whose properties are already
well understood. The remaining cases correspond to hybridization and
postprocessing methods for the vector Poisson equation.

For the sake of brevity, we present only numerical experiments using
$ \mathcal{P} _{ r + 1 } ^- \Lambda $ elements with
$ \star \mathcal{P} _{ r ^\ast + 1 } ^- \Lambda $ postprocessing,
where $ r ^\ast $ is chosen optimally according to \cref{assumption},
and where $f$ has nonvanishing components in both $ \mathfrak{B} ^k $
and $ \mathring{ \mathfrak{B} } ^\ast _k $. Errors and rates are shown
only for the normal traces and postprocessed solution components,
since the convergence behavior of the remaining variables follows from
previous work.  We have conducted many additional numerical
experiments, which all conform with the theoretical results.

All computations have been carried out using the Firedrake finite
element library \citep{RaHaMiLaLuMcBeMaKe2017} (version
0.13.0+3719.g8e730839), and a Firedrake component called Slate
\citep{GiMiHaCo2020} was used to implement the local solvers for
static condensation and postprocessing.

\begin{table}
  \centering
  \scriptsize \begin{tabular}{*{14}{c}}
\toprule
$r$ & $N$ &
\multicolumn{2}{c}{$ \lVERT \widehat{ P } _h u ^{\mathrm{nor}} - \widehat{u} _h ^{\mathrm{nor}} \rVERT _{ \partial \mathcal{T} _h } $} &
\multicolumn{2}{c}{$ \lVert u - u _h ^\ast \rVert _{ \mathcal{T} _h } $} &
\multicolumn{2}{c}{$ \bigl\lVert \delta (u - u _h ^\ast ) \bigr\rVert _{ \partial \mathcal{T} _h } $} &
\multicolumn{2}{c}{$ \lVERT \widehat{ P } _h \rho ^{\mathrm{nor}} - \widehat{\rho} _h ^{\mathrm{nor}} \rVERT _{ \partial \mathcal{T} _h } $} &
\multicolumn{2}{c}{$ \lVert \rho - \rho _h ^\ast \rVert _{ \mathcal{T} _h }  $} &
\multicolumn{2}{c}{$ \bigl\lVert \delta (\rho - \rho _h ^\ast) \bigr\rVert _{ \mathcal{T} _h }  $} \\
\midrule 0  & 1 & 2.06e-01 & --- & 5.03e-01 & --- & 5.87e-01 & --- & 7.87e-01 & --- & 1.38e+00 & --- & 6.24e+00 & --- \\
 & 2 & 4.67e-01 & -1.2 & 5.27e-01 & -0.1 & 6.32e-01 & -0.1 & 1.56e+00 & -1.0 & 1.27e+00 & 0.1 & 4.70e+00 & 0.4 \\
 & 4 & 3.13e-01 & 0.6 & 2.91e-01 & 0.9 & 1.95e-01 & 1.7 & 9.38e-01 & 0.7 & 6.83e-01 & 0.9 & 2.47e+00 & 0.9 \\
 & 8 & 1.80e-01 & 0.8 & 1.51e-01 & 0.9 & 5.35e-02 & 1.9 & 5.02e-01 & 0.9 & 3.49e-01 & 1.0 & 1.28e+00 & 1.0 \\
 & 16 & 9.42e-02 & 0.9 & 7.66e-02 & 1.0 & 1.38e-02 & 2.0 & 2.57e-01 & 1.0 & 1.76e-01 & 1.0 & 6.46e-01 & 1.0 \\
\midrule 1  & 1 & 1.99e-01 & --- & 3.33e-01 & --- & 3.69e-01 & --- & 1.65e+00 & --- & 1.02e+00 & --- & 3.62e+00 & --- \\
 & 2 & 1.76e-01 & 0.2 & 8.45e-02 & 2.0 & 4.97e-02 & 2.9 & 8.09e-01 & 1.0 & 2.81e-01 & 1.9 & 7.46e-01 & 2.3 \\
 & 4 & 5.82e-02 & 1.6 & 2.56e-02 & 1.7 & 7.93e-03 & 2.6 & 2.44e-01 & 1.7 & 7.70e-02 & 1.9 & 2.16e-01 & 1.8 \\
 & 8 & 1.60e-02 & 1.9 & 6.84e-03 & 1.9 & 1.06e-03 & 2.9 & 6.47e-02 & 1.9 & 1.98e-02 & 2.0 & 5.71e-02 & 1.9 \\
 & 16 & 4.14e-03 & 1.9 & 1.75e-03 & 2.0 & 1.36e-04 & 3.0 & 1.66e-02 & 2.0 & 5.01e-03 & 2.0 & 1.46e-02 & 2.0 \\
\midrule 2  & 1 & 1.09e-01 & --- & 5.68e-02 & --- & 2.01e-02 & --- & 5.14e-01 & --- & 2.10e-01 & --- & 5.65e-01 & --- \\
 & 2 & 4.61e-02 & 1.2 & 1.19e-02 & 2.3 & 4.46e-03 & 2.2 & 2.32e-01 & 1.1 & 5.06e-02 & 2.1 & 1.09e-01 & 2.4 \\
 & 4 & 7.16e-03 & 2.7 & 1.52e-03 & 3.0 & 2.84e-04 & 4.0 & 3.52e-02 & 2.7 & 6.67e-03 & 2.9 & 1.20e-02 & 3.2 \\
 & 8 & 9.68e-04 & 2.9 & 1.92e-04 & 3.0 & 1.77e-05 & 4.0 & 4.71e-03 & 2.9 & 8.41e-04 & 3.0 & 1.44e-03 & 3.1 \\
 & 16 & 1.25e-04 & 3.0 & 2.42e-05 & 3.0 & 1.11e-06 & 4.0 & 6.05e-04 & 3.0 & 1.05e-04 & 3.0 & 1.78e-04 & 3.0 \\
\bottomrule
\end{tabular}

  \bigskip
  \caption{Errors and rates for a manufactured solution with
    $ n = 3 $, $ k = 1 $, using hybridization with
    $ \mathcal{P} _{r+1} ^- \Lambda ^0 \cong \mathtt{CG} _{ r + 1 } $
    and
    $ \mathcal{P} _{ r + 1 } ^- \Lambda ^1 \cong \mathtt{N1E} _{r+1} $
    elements and local postprocessing with broken
    $ \star \mathcal{P} _{ r + 2 } ^- \Lambda ^1 \cong \mathtt{N1E} _{
      r + 2 } $ and
    $ \star \mathcal{P} _{ r + 2 } ^- \Lambda ^2 \cong \mathtt{N1F} _{
      r + 2 } $ elements. Since $ k < n -1 $, we get improved
    convergence of $ \delta \rho _h ^\ast $ but not
    $ \widehat{ \rho } _h ^{\mathrm{nor}} $ or $ \rho _h ^\ast $.\label{tab:n=3_k=1}}
\end{table}

\begin{table}
  \centering
  \scriptsize \begin{tabular}{*{14}{c}}
\toprule
$r$ & $N$ &
\multicolumn{2}{c}{$ \lVERT \widehat{ P } _h u ^{\mathrm{nor}} - \widehat{u} _h ^{\mathrm{nor}} \rVERT _{ \partial \mathcal{T} _h } $} &
\multicolumn{2}{c}{$ \lVert u - u _h ^\ast \rVert _{ \mathcal{T} _h } $} &
\multicolumn{2}{c}{$ \bigl\lVert \delta (u - u _h ^\ast ) \bigr\rVert _{ \partial \mathcal{T} _h } $} &
\multicolumn{2}{c}{$ \lVERT \widehat{ P } _h \rho ^{\mathrm{nor}} - \widehat{\rho} _h ^{\mathrm{nor}} \rVERT _{ \partial \mathcal{T} _h } $} &
\multicolumn{2}{c}{$ \lVert \rho - \rho _h ^\ast \rVert _{ \mathcal{T} _h }  $} &
\multicolumn{2}{c}{$ \bigl\lVert \delta (\rho - \rho _h ^\ast) \bigr\rVert _{ \mathcal{T} _h }  $} \\
\midrule 0  & 1 & 7.19e-01 & --- & 6.97e-01 & --- & 2.14e+00 & --- & 1.28e+00 & --- & 1.97e+00 & --- & 1.31e+01 & --- \\
 & 2 & 5.98e-01 & 0.3 & 5.07e-01 & 0.5 & 1.75e+00 & 0.3 & 1.17e+00 & 0.1 & 7.57e-01 & 1.4 & 9.79e+00 & 0.4 \\
 & 4 & 3.30e-01 & 0.9 & 2.62e-01 & 0.9 & 9.51e-01 & 0.9 & 3.98e-01 & 1.6 & 2.19e-01 & 1.8 & 5.32e+00 & 0.9 \\
 & 8 & 1.74e-01 & 0.9 & 1.33e-01 & 1.0 & 4.90e-01 & 1.0 & 1.15e-01 & 1.8 & 5.78e-02 & 1.9 & 2.72e+00 & 1.0 \\
 & 16 & 8.84e-02 & 1.0 & 6.66e-02 & 1.0 & 2.47e-01 & 1.0 & 3.02e-02 & 1.9 & 1.47e-02 & 2.0 & 1.37e+00 & 1.0 \\
\midrule 1  & 1 & 6.54e-01 & --- & 5.55e-01 & --- & 1.72e+00 & --- & 2.97e+00 & --- & 9.23e-01 & --- & 9.40e+00 & --- \\
 & 2 & 2.83e-01 & 1.2 & 1.59e-01 & 1.8 & 4.69e-01 & 1.9 & 3.97e-01 & 2.9 & 2.21e-01 & 2.1 & 3.66e+00 & 1.4 \\
 & 4 & 8.64e-02 & 1.7 & 4.26e-02 & 1.9 & 1.24e-01 & 1.9 & 5.31e-02 & 2.9 & 3.07e-02 & 2.8 & 1.01e+00 & 1.9 \\
 & 8 & 2.32e-02 & 1.9 & 1.09e-02 & 2.0 & 3.15e-02 & 2.0 & 6.79e-03 & 3.0 & 3.94e-03 & 3.0 & 2.59e-01 & 2.0 \\
 & 16 & 5.97e-03 & 2.0 & 2.74e-03 & 2.0 & 7.93e-03 & 2.0 & 8.60e-04 & 3.0 & 4.97e-04 & 3.0 & 6.51e-02 & 2.0 \\
\midrule 2  & 1 & 2.44e-01 & --- & 2.25e-01 & --- & 6.60e-01 & --- & 3.66e-01 & --- & 6.63e-01 & --- & 6.18e+00 & --- \\
 & 2 & 8.47e-02 & 1.5 & 4.16e-02 & 2.4 & 1.02e-01 & 2.7 & 8.26e-02 & 2.1 & 5.40e-02 & 3.6 & 1.08e+00 & 2.5 \\
 & 4 & 1.29e-02 & 2.7 & 5.71e-03 & 2.9 & 1.33e-02 & 2.9 & 5.51e-03 & 3.9 & 3.75e-03 & 3.8 & 1.50e-01 & 2.8 \\
 & 8 & 1.73e-03 & 2.9 & 7.31e-04 & 3.0 & 1.68e-03 & 3.0 & 3.54e-04 & 4.0 & 2.41e-04 & 4.0 & 1.92e-02 & 3.0 \\
 & 16 & 2.22e-04 & 3.0 & 9.20e-05 & 3.0 & 2.11e-04 & 3.0 & 2.24e-05 & 4.0 & 1.51e-05 & 4.0 & 2.42e-03 & 3.0 \\
\bottomrule
\end{tabular}

  \bigskip
  \caption{Errors and rates for a manufactured solution with
    $ n = 3 $, $ k = 2 $, using hybridization with
    $ \mathcal{P} _{r+1} ^- \Lambda ^1 \cong \mathtt{N1E} _{ r + 1 } $
    and
    $ \mathcal{P} _{ r + 1 } ^- \Lambda ^2 \cong \mathtt{N1F} _{r+1} $
    elements and local postprocessing with broken
    $ \star \mathcal{P} _{ r + 1 } ^- \Lambda ^0 \cong \mathtt{CG} _{
      r + 1 } $ and
    $ \star \mathcal{P} _{ r + 1 } ^- \Lambda ^1 \cong \mathtt{N1E} _{
      r + 1 } $ elements. Since $ k = n -1 $, we get
    superconvergence of $ \widehat{ \rho } _h ^{\mathrm{nor}} $ and
    $ \rho _h ^\ast $ as compared with
    $ \rho _h = \mathrm{d} u _h $.\label{tab:n=3_k=2}}
\end{table}

\subsection{Test problems}
On the unit cube $ \Omega = [ 0, 1 ] ^3 $, a structured tetrahedral
mesh $ \mathcal{T} _h $ is formed by partitioning $\Omega$ into
$ N \times N \times N $ cubes, each of which is divided into six
tetrahedra. As in \cref{sec:examples}, we identify
$ H \Lambda (\Omega) $ and $ H ^\ast \Lambda (\Omega) $ with the
complexes of scalar and vector proxy fields. We use the ``method of
manufactured solutions'' by choosing a smooth $u$ satisfying the
boundary conditions, taking $ f = - \Delta u $, and applying the
numerical method to this $f$. For $ k = 1 $, we choose
\begin{equation*}
  u(x,y,z) =
  \begin{bmatrix}
    \sin ( \pi x ) \\
    \sin ( \pi y ) \\
    \sin ( \pi z )
  \end{bmatrix} +
  \begin{bmatrix}
    \hphantom{-} \sin ( \pi x ) \cos ( \pi y ) \\
    - \cos ( \pi x ) \sin ( \pi y ) \\
    0
  \end{bmatrix},
\end{equation*} 
where the first term is in $ \mathfrak{B} ^1 $ and the second is in
$ \mathring{ \mathfrak{B} } ^\ast _1 $. For $ k = 2 $, we choose
\begin{equation*}
  u(x, y, z ) =
  \begin{bmatrix}
    \sin ( \pi y ) \sin ( \pi z ) \\
    \sin ( \pi x ) \sin ( \pi z ) \\
    \sin ( \pi x ) \sin ( \pi y ) 
  \end{bmatrix} +
  \begin{bmatrix}
    \cos ( \pi x ) \sin ( \pi y ) \sin ( \pi z ) \\
    \sin ( \pi x ) \cos ( \pi y ) \sin ( \pi z ) \\
    \sin ( \pi x ) \sin ( \pi y ) \cos ( \pi z ) 
  \end{bmatrix},
\end{equation*}
where the first term is in $ \mathfrak{B} ^2 $ and the second is in
$ \mathring{ \mathfrak{B} } ^\ast _2 $.

\subsection{Results}

\cref{tab:n=3_k=1} shows the errors and rates for the $ k = 1 $
problem, using $ \mathcal{P} _{r+1} ^- \Lambda $ elements and
$ \star \mathcal{P} _{ r + 2 } ^- \Lambda $ postprocessing. (Since
$ \mathcal{P} _{ r + 1 } ^- \Lambda ^0 \cong \mathcal{P} _{ r + 1 }
\Lambda ^0 $, the minimum degree satisfying \cref{assumption} is
$ r ^\ast = r + 1 $.)  \cref{tab:n=3_k=2} shows the errors and rates
for the $ k = 2 $ problem, using $ \mathcal{P} _{r+1} ^- \Lambda $
elements and $ \star \mathcal{P} _{ r + 1 } ^- \Lambda $
postprocessing. For clarity, the captions describe the elements both
in FEEC notation and in terms of their classical scalar and vector
proxies. Adopting the Unified Form Language (UFL)
\citep{AlLoOlRoWe2014} notation used by Firedrake, we denote Lagrange
finite elements by \texttt{CG}, N\'ed\'elec
$ H ( \operatorname{curl} ) $ edge elements of the first kind by
\texttt{N1E}, and N\'ed\'elec $ H ( \operatorname{div} ) $ face
elements of the first kind by \texttt{N1F}.

These results match the error estimates in
\cref{sec:error_estimates,sec:postprocessing_estimates}. Specifically,
when $ k = 1 < n - 1 $, we do not get superconvergence of
$ \widehat{ \rho } _h ^{\mathrm{nor}} $ or $ \rho _h ^\ast $: both
converge with the same rate $ \mathcal{O} ( h ^{ r + 1 } ) $ as
$ \rho _h = \mathrm{d} u _h $. However, $ \delta \rho _h ^\ast $
converges with improved rate $ \mathcal{O} ( h ^{ r + 1 } ) $,
compared with $ \mathcal{O} ( h ^r ) $ for $ \delta \rho _h $. On the
other hand, when $ k = 2 = n - 1 $, we see that
$ \widehat{ \rho } _h ^{\mathrm{nor}} $ and $ \rho _h ^\ast $ both
superconverge with rate $ \mathcal{O} ( h ^{ r + 2 } ) $.

\section{A view toward HDG methods for finite element exterior
  calculus}
\label{sec:hdg}

In this last section, we briefly present an even more general approach
to domain decomposition and hybrid methods for the Hodge--Laplace
problem. This includes hybridization of the conforming FEEC methods we
have discussed so far, as well as nonconforming and HDG methods. In
the cases $ k = 0 $ and $ k = n $, we recover the unified
hybridization framework of \citet*{CoGoLa2009} for the scalar Poisson
equation. When $ n = 3 $, the cases $ k = 1 $ and $ k = 2 $ include
some recently proposed HDG methods for the vector Poisson equation and
Maxwell's equations. Although we lay out the framework here, we
postpone a detailed discussion and analysis of these methods for
future work.

\subsection{Variational principle}

To motivate the variational principle for these more general methods,
we begin with a new formulation of the exact local solvers for the
Hodge--Laplace problem. Given $ \widehat{ \sigma } ^{\mathrm{tan}} $,
$ \widehat{ u } ^{\mathrm{tan}} $ on $ \partial K $,
$ \overline{ u } \in \mathring{ \mathfrak{H} } ^k (K) $, and
$ p \in \mathfrak{H} ^k $, observe that the exact solution satisfies
\begin{alignat*}{2}
  ( \sigma, \tau ) _K - ( u , \mathrm{d} \tau ) _K + \langle u
  ^{\mathrm{nor}} , \tau ^{\mathrm{tan}} \rangle _{ \partial K } &= 0,
  \quad &\forall \tau &\in H \Lambda ^{k-1} (K) \cap H ^\ast \Lambda
  ^{k-1} (K) ,\\
  ( \sigma, \delta v ) _K + ( \rho , \mathrm{d} v ) _K + ( \overline{p} , v ) _K - \langle \rho ^{\mathrm{nor}} , v ^{\mathrm{tan}} \rangle _{ \partial K } &= \rlap{$( f - p , v ) _K - \langle \widehat{ \sigma } ^{\mathrm{tan}} , v ^{\mathrm{nor}} \rangle _{ \partial K }$,} \\
  &&\forall v &\in H \Lambda ^k (K) \cap H ^\ast \Lambda ^k (K) ,\\
  (\rho, \eta ) _K - ( u, \delta \eta ) _K &= \langle \widehat{ u }
  ^{\mathrm{tan}} , \eta ^{\mathrm{nor}} \rangle _{ \partial K } ,
  \quad &\forall \eta &\in H \Lambda ^{ k + 1 } (K) \cap H ^\ast \Lambda ^{ k + 1 } (K) ,\\
  ( u , \overline{q} ) _K &= ( \overline{ u } , \overline{q} ) _K , \quad &\forall \overline{q} &\in \mathring{ \mathfrak{H}  } ^k (K) .
\end{alignat*}
Here, both $ \mathrm{d} $ and $ \delta $ are taken weakly, as they are
only applied to test functions.

Now, suppose we choose finite element spaces
$ W _h ^k (K) \subset H \Lambda ^k (K) \cap H ^\ast \Lambda ^k (K) $
for each $ K \in \mathcal{T} _h $, giving the broken space
$ W _h ^k \coloneqq \prod _{ K \in \mathcal{T} _h } W _h ^k (K) $, and
likewise for $ W _h ^{ k \pm 1 } $. Suppose we also choose unbroken
spaces
$ \widehat{ V } _h ^{k -1 , \mathrm{tan}} \subset \widehat{ V } ^{ k
  -1 , \mathrm{tan} } $ and
$ \widehat{ V } _h ^{k, \mathrm{tan}} \subset \widehat{ V } ^{ k,
  \mathrm{tan} } $, which do not necessarily correspond to tangential
traces of $ W _h ^{ k -1 } $ and $ W _h ^k $. Then we consider the
variational problem: Find
\begin{align*} 
  \tag{local variables} \sigma _h &\in W _h ^{ k -1 } , & u _h &\in W _h  ^k ,  & \rho _h & \in W _h ^{ k + 1 } , & \overline{p} _h &\in \overline{ \mathfrak{H}  } _h ^k , \\
  \tag{global variables} p _h &\in \mathfrak{H} _h ^k ,& \overline{ u } _h &\in \overline{ \mathfrak{H}  } _h ^k , & \widehat{ \sigma } _h ^{\mathrm{tan}} &\in \widehat{ V } _h ^{k-1, \mathrm{tan}} , & \widehat{ u } _h ^{\mathrm{tan}} &\in \widehat{ V } _h ^{k, \mathrm{tan}} ,
\end{align*} 
satisfying
\begin{subequations}
  \label{eqn:feec-hdg}
  \begin{alignat}{2}
    (\sigma _h , \tau _h ) _{ \mathcal{T} _h } - ( u _h , \mathrm{d} \tau _h ) _{ \mathcal{T} _h } + \langle \widehat{ u } _h ^{\mathrm{nor}} , \tau _h ^{\mathrm{tan}} \rangle _{ \partial \mathcal{T} _h } &= 0, \quad &\forall \tau _h &\in W _h ^{ k -1 } ,\\
    \begin{multlined}[b]
      ( \sigma _h , \delta v _h ) _{ \mathcal{T} _h } + ( \rho _h , \mathrm{d} v _h ) _{ \mathcal{T} _h } + ( \overline{p} _h + p _h , v _h ) _{ \mathcal{T} _h } \\
      + \langle \widehat{ \sigma } _h ^{\mathrm{tan}} , v _h ^{\mathrm{nor}} \rangle _{ \partial \mathcal{T} _h } - \langle \widehat{ \rho } _h ^{\mathrm{nor}} , v _h ^{\mathrm{tan}} \rangle _{ \partial \mathcal{T} _h }
    \end{multlined}
    &= ( f, v _h ) _{ \mathcal{T} _h } ,\quad &\forall v _h &\in W _h ^k , \label{eqn:feec-hdg_v}\\
    ( \rho _h , \eta _h ) _{ \mathcal{T} _h } - ( u _h , \delta \eta _h ) _{ \mathcal{T} _h } - \langle \widehat{ u } _h ^{\mathrm{tan}} , \eta _h ^{\mathrm{nor}} \rangle _{ \partial \mathcal{T} _h } &= 0, \quad &\forall \eta _h &\in W _h ^{ k + 1 } , \label{eqn:feec-hdg_eta}\\
    ( \overline{ u } _h - u _h , \overline{q} _h ) _{ \mathcal{T} _h } &= 0 , \quad &\forall \overline{q} _h &\in \overline{ \mathfrak{H}  } _h ^k ,\\
    ( u _h , q _h ) _{ \mathcal{T} _h } &= 0, \quad &\forall q _h &\in \mathfrak{H}  _h ^k ,\\
    (\overline{p} _h , \overline{ v } _h ) _{ \mathcal{T} _h } &= 0, \quad &\forall \overline{ v } _h &\in \overline{ \mathfrak{H}  } _h ^k ,\\
    \langle \widehat{ u } _h ^{\mathrm{nor}} , \widehat{ \tau } _h ^{\mathrm{tan}} \rangle _{ \partial \mathcal{T} _h } &= 0 , \quad &\forall \widehat{ \tau } _h ^{\mathrm{tan}} &\in \widehat{ V } _h ^{k-1, \mathrm{tan}} ,\\
    \langle \widehat{ \rho } _h ^{\mathrm{nor}} , \widehat{ v } _h ^{\mathrm{tan}} \rangle _{ \partial \mathcal{T} _h } &= 0 , \quad &\forall \widehat{ v } _h ^{\mathrm{tan}} &\in \widehat{ V } _h ^{k, \mathrm{tan}} .
  \end{alignat}
\end{subequations}
To complete the specification of the problem, one must define the
approximate normal traces $ \widehat{ u } _h ^{\mathrm{nor}} $ and
$ \widehat{ \rho } _h ^{\mathrm{nor}} $, which play the same role as
the ``numerical flux'' does in \citep{CoGoLa2009}. The discrete
harmonic spaces $ \overline{ \mathfrak{H} } _h ^k $ and
$ \mathfrak{H} _h ^k $ are then defined so that the local and global
solvers have unique solutions.

\begin{remark}
For the scalar Poisson
equation, we recover the unified hybridization framework of
\citep{CoGoLa2009}. If $ k = 0 $, then in terms of scalar and vector
proxies, \eqref{eqn:feec-hdg} simplifies to
\begin{alignat*}{2}
  ( \rho _h , \operatorname{grad} v _h ) _{ \mathcal{T} _h } + ( p _h , v _h ) _{ \mathcal{T} _h } - \langle \widehat{ \rho } _h ^{\mathrm{nor}} , v _h \rangle _{ \partial \mathcal{T} _h } &= ( f, v _h ) _{ \mathcal{T} _h } ,\quad &\forall v _h &\in W _h ^0 ,\\
  ( \rho _h , \eta _h ) _{ \mathcal{T} _h } + ( u _h , \operatorname{div} \eta _h ) _{ \mathcal{T} _h } - \langle \widehat{ u } _h ^{\mathrm{tan}} , \eta _h \cdot \normal \rangle _{ \partial \mathcal{T} _h } &= 0, \quad &\forall \eta _h &\in W _h ^1 ,\\
  ( u _h , q _h ) _{ \mathcal{T} _h } &= 0, \quad &\forall q _h &\in \mathfrak{H}  _h ^0 ,\\
  \langle \widehat{ \rho } _h ^{\mathrm{nor}} , \widehat{ v } _h ^{\mathrm{tan}} \rangle _{ \partial \mathcal{T} _h } &= 0 , \quad &\forall \widehat{ v } _h ^{\mathrm{tan}} &\in \widehat{ V } _h ^{0, \mathrm{tan}} ,
\end{alignat*}
which gives the methods of \citep{CoGoLa2009} for the Neumann problem,
using local Dirichlet solvers.  Alternatively, if $ k = n $, and each
$ K \in \mathcal{T} _h $ is connected (e.g., simplicial), then
\eqref{eqn:feec-hdg} becomes
\begin{alignat*}{2}
  (\sigma _h , \tau _h ) _{ \mathcal{T} _h } - ( u _h , \operatorname{div} \tau _h ) _{ \mathcal{T} _h } + \langle \widehat{ u } _h ^{\mathrm{nor}} , \tau _h \rangle _{ \partial \mathcal{T} _h } &= 0, \quad &\forall \tau _h &\in W _h ^{ n -1 } ,\\
  -( \sigma _h , \operatorname{grad} v _h ) _{ \mathcal{T} _h } + ( \overline{p} _h , v _h ) _{ \mathcal{T} _h } + \langle \widehat{ \sigma } _h ^{\mathrm{tan}} , v _h \normal \rangle _{ \partial \mathcal{T} _h } &= ( f, v _h ) _{ \mathcal{T} _h } ,\quad &\forall v _h &\in W _h ^n ,\\
  ( \overline{ u } _h - u _h , \overline{q} _h ) _{ \mathcal{T} _h } &= 0 , \quad &\forall \overline{q} _h &\in \mathbb{R}  ^{ \mathcal{T} _h }  ,\\
  (\overline{p} _h , \overline{ v } _h ) _{ \mathcal{T} _h } &= 0, \quad &\forall \overline{ v } _h &\in \mathbb{R} ^{ \mathcal{T} _h }  ,\\
  \langle \widehat{ u } _h ^{\mathrm{nor}} , \widehat{ \tau } _h ^{\mathrm{tan}} \rangle _{ \partial \mathcal{T} _h } &= 0 , \quad &\forall \widehat{ \tau } _h ^{\mathrm{tan}} &\in \widehat{ V } _h ^{k-1, \mathrm{tan}} ,
\end{alignat*}
which is the alternative hybridization of \citet[Section
5]{Cockburn2016} using local Neumann solvers.
\end{remark}

\subsection{Examples of methods}

Different choices of the finite element spaces and approximate normal
traces in \eqref{eqn:feec-hdg} yield different families of methods. We
now discuss a few specific examples.

\subsubsection{The hybridized FEEC methods}

Suppose we choose the spaces $ W _h $ and $ \widehat{ V } _h $ as in
\cref{sec:hybrid}. We then define
$ \widehat{ u } _h ^{\mathrm{nor}} \in \widehat{ W } _h ^{k -1 ,
  \mathrm{nor}} $ and
$ \widehat{ \rho } _h ^{\mathrm{nor}} \in \widehat{ W } _h ^{k ,
  \mathrm{nor}} $ to be new unknown variables, which are determined by
augmenting \eqref{eqn:feec-hdg} by the equations
\begin{alignat*}{2}
  \langle \widehat{ \sigma } _h ^{\mathrm{tan}} - \sigma _h ^{\mathrm{tan}} , \widehat{ v } _h ^{\mathrm{nor}} \rangle _{ \partial \mathcal{T} _h } &= 0, \quad &\forall \widehat{ v } _h ^{\mathrm{nor}} &\in \widehat{ W } _h ^{k-1, \mathrm{nor}}, \tag{\ref{eqn:feec-h_vnor}}\\
  \langle \widehat{ u } _h ^{\mathrm{tan}} - u _h ^{\mathrm{tan}} , \widehat{ \eta } _h ^{\mathrm{nor}} \rangle _{ \partial \mathcal{T} _h } &= 0, \quad &\forall \widehat{ \eta } _h ^{\mathrm{nor}} &\in \widehat{ W } _h ^{k, \mathrm{nor}} \tag{\ref{eqn:feec-h_etanor}}.
\end{alignat*}
Using these, \eqref{eqn:feec-hdg_v} and \eqref{eqn:feec-hdg_eta}
become equivalent to \eqref{eqn:feec-h_v} and
$ \rho _h = \mathrm{d} u _h $, respectively. Hence, the variational
problem is equivalent to \eqref{eqn:feec-h}, so we recover the
hybridized FEEC methods of \cref{sec:hybrid}.

\subsubsection{Mixed and nonconforming hybrid methods}
\label{sec:mixed-nonconforming}

Suppose we take
$ \widehat{ u } _h ^{\mathrm{nor}} = u _h ^{\mathrm{nor}} $ and
$ \widehat{ \rho } _h ^{\mathrm{nor}} = \rho _h ^{\mathrm{nor}}
$. Then, using integration by parts, \eqref{eqn:feec-hdg} simplifies
to
\begin{alignat*}{2}
  ( \delta u _h , \delta v _h ) _{ \mathcal{T} _h } + ( \delta \rho _h , v _h ) _{ \mathcal{T} _h } + ( \overline{p} _h + p _h , v _h ) _{ \mathcal{T} _h } + \langle \widehat{ \sigma } _h ^{\mathrm{tan}} , v _h ^{\mathrm{nor}} \rangle _{ \partial \mathcal{T} _h } &= ( f, v _h ) _{ \mathcal{T} _h } ,\quad &\forall v _h &\in W _h ^k ,\\
  ( \rho _h , \eta _h ) _{ \mathcal{T} _h } - ( u _h , \delta \eta _h ) _{ \mathcal{T} _h } - \langle \widehat{ u } _h ^{\mathrm{tan}} , \eta _h ^{\mathrm{nor}} \rangle _{ \partial \mathcal{T} _h } &= 0, \quad &\forall \eta _h &\in W _h ^{ k + 1 } ,\\
  ( \overline{ u } _h - u _h , \overline{q} _h ) _{ \mathcal{T} _h } &= 0 , \quad &\forall \overline{q} _h &\in \overline{ \mathfrak{H}  } _h ^k ,\\
  ( u _h , q _h ) _{ \mathcal{T} _h } &= 0, \quad &\forall q _h &\in \mathfrak{H}  _h ^k ,\\
  (\overline{p} _h , \overline{ v } _h ) _{ \mathcal{T} _h } &= 0, \quad &\forall \overline{ v } _h &\in \overline{ \mathfrak{H}  } _h ^k ,\\
  \langle u _h ^{\mathrm{nor}} , \widehat{ \tau } _h ^{\mathrm{tan}} \rangle _{ \partial \mathcal{T} _h } &= 0 , \quad &\forall \widehat{ \tau } _h ^{\mathrm{tan}} &\in \widehat{ V } _h ^{k-1, \mathrm{tan}} ,\\
  \langle \rho _h ^{\mathrm{nor}} , \widehat{ v } _h ^{\mathrm{tan}} \rangle _{ \partial \mathcal{T} _h } &= 0 , \quad &\forall \widehat{ v } _h ^{\mathrm{tan}} &\in \widehat{ V } _h ^{k, \mathrm{tan}} ,
\end{alignat*}
and $ \sigma _h = \delta u _h $.
When $ k = 0 $, we obtain mixed hybrid methods for the Neumann problem
using local Dirichlet solvers, including the classic hybridized RT and
BDM methods \citep{ArBr1985,BrDoMa1985}. When $ k = n $, we obtain
primal hybrid methods for the Dirichlet problem using local Neumann
solvers, including the nonconforming hybrid method of
\citet{RaTh1977_hybrid}.

\subsubsection{Hybridizable discontinuous Galerkin methods} Suppose we
take
\begin{equation*}
  \widehat{ u } _h ^{\mathrm{nor}} = u _h ^{\mathrm{nor}} - \lambda ( \widehat{ \sigma } _h ^{\mathrm{tan}} - \sigma _h ^{\mathrm{tan}} ) , \qquad \widehat{ \rho } _h ^{\mathrm{nor}} = \rho _h ^{\mathrm{nor}} + \mu ( \widehat{ u } _h ^{\mathrm{tan}} - u _h ^{\mathrm{tan}} ) ,
\end{equation*}
where $\lambda$ and $\mu$ are penalty functions on
$ \partial \mathcal{T} _h $. \cref{sec:mixed-nonconforming}
corresponds to the case $ \lambda = \mu = 0 $, while the hybridized
FEEC methods of \cref{sec:hybrid} can be seen as the limiting case
$ \lambda, \mu \rightarrow \infty $.

When $ k = 0 $, \eqref{eqn:feec-hdg} becomes the hybrid local
discontinuous Galerkin (LDG-H) method of \citep{CoGoLa2009}, while
$ k = n $ gives the alternative implementation of \citep[Section
5]{Cockburn2016} using local Neumann solvers. For the vector Poisson
equation when $ n = 2 $ or $ n = 3 $, \eqref{eqn:feec-hdg} corresponds
to the recent HDG methods of \citet*{NgPeCo2011,ChQiShSo2017}, which
have been applied to Maxwell's equations. Since the initial appearance
of the current manuscript as a preprint, \citet*{HoLiXu2021} have
analyzed several methods of this type for general $k$ and $n$ within
the extended Galerkin (XG) framework.

Finally, a different family of HDG methods may be constructed by
taking
\begin{equation*}
  \widehat{ u } _h ^{\mathrm{nor}} = u _h ^{\mathrm{nor}} - \lambda \bigl(  \widehat{ \sigma } _h ^{\mathrm{tan}} - (\delta u _h) ^{\mathrm{tan}} \bigr)  , \qquad \widehat{ \rho } _h ^{\mathrm{nor}} = \mathrm{d} u _h ^{\mathrm{nor}} + \mu ( \widehat{ u } _h ^{\mathrm{tan}} - u _h ^{\mathrm{tan}} ) ,
\end{equation*}
which generalizes the hybrid interior penalty (IP-H) method of
\citep{CoGoLa2009}.

\section*{Acknowledgments}

Gerard Awanou, Johnny Guzman, and Ari Stern would like to thank the
Isaac Newton Institute for Mathematical Sciences for support and
hospitality during the program ``Geometry, compatibility and structure
preservation in computational differential equations,'' when work on
this paper was undertaken. This program was supported by EPSRC grant
number EP/R014604/1.

Gerard Awanou was supported by NSF grant DMS-1720276, Johnny Guzman by
NSF grants DMS-1620100 and DMS-1913083, and Ari Stern by NSF grant
DMS-1913272.

Finally, Ari Stern wishes to thank Mary Barker for assisting with
preliminary numerical experiments and Colin Cotter, Thomas Gibson, and
David Ham for help with Firedrake.

\footnotesize


\begin{thebibliography}{43}
\providecommand{\natexlab}[1]{#1}

\bibitem[{Aln\ae~s et~al.(2014)Aln\ae~s, Logg, {\O}lgaard, Rognes, and
  Wells}]{AlLoOlRoWe2014}
\textsc{M.~S. Aln\ae~s, A.~Logg, K.~B. {\O}lgaard, M.~E. Rognes, and G.~N.
  Wells}, \emph{Unified form language: a domain-specific language for weak
  formulations and partial differential equations}, ACM Trans. Math. Software,
  40 (2014), pp. Art. 9, 37.

\bibitem[{Arnold(2018)}]{Arnold2018}
\textsc{D.~N. Arnold}, \emph{Finite element exterior calculus}, vol.~93 of
  CBMS-NSF Regional Conference Series in Applied Mathematics, Society for
  Industrial and Applied Mathematics (SIAM), Philadelphia, PA, 2018.

\bibitem[{Arnold and Brezzi(1985)}]{ArBr1985}
\textsc{D.~N. Arnold and F.~Brezzi}, \emph{Mixed and nonconforming finite
  element methods: implementation, postprocessing and error estimates}, RAIRO
  Mod\'{e}l. Math. Anal. Num\'{e}r., 19 (1985), pp. 7--32.

\bibitem[{Arnold et~al.(2006)Arnold, Falk, and Winther}]{ArFaWi2006}
\textsc{D.~N. Arnold, R.~S. Falk, and R.~Winther}, \emph{Finite element
  exterior calculus, homological techniques, and applications}, Acta Numer., 15
  (2006), pp. 1--155.

\bibitem[{Arnold et~al.(2010)Arnold, Falk, and Winther}]{ArFaWi2010}
\leavevmode\vrule height 2pt depth -1.6pt width 23pt, \emph{Finite element
  exterior calculus: from {H}odge theory to numerical stability}, Bull. Amer.
  Math. Soc. (N.S.), 47 (2010), pp. 281--354.

\bibitem[{Boffi et~al.(2013)Boffi, Brezzi, and Fortin}]{BoBrFo2013}
\textsc{D.~Boffi, F.~Brezzi, and M.~Fortin}, \emph{Mixed finite element methods
  and applications}, vol.~44 of Springer Series in Computational Mathematics,
  Springer, Heidelberg, 2013.

\bibitem[{Bramble and Xu(1989)}]{BrXu1989}
\textsc{J.~H. Bramble and J.~Xu}, \emph{A local post-processing technique for
  improving the accuracy in mixed finite-element approximations}, SIAM J.
  Numer. Anal., 26 (1989), pp. 1267--1275.

\bibitem[{Brezzi et~al.(1985)Brezzi, Douglas, and Marini}]{BrDoMa1985}
\textsc{F.~Brezzi, J.~Douglas, Jr., and L.~D. Marini}, \emph{Two families of
  mixed finite elements for second order elliptic problems}, Numer. Math., 47
  (1985), pp. 217--235.

\bibitem[{Brezzi and Marini(1994)}]{BrMa1994}
\textsc{F.~Brezzi and L.~D. Marini}, \emph{A three-field domain decomposition
  method}, in Domain decomposition methods in science and engineering ({C}omo,
  1992), vol. 157 of Contemp. Math., Amer. Math. Soc., Providence, RI, 1994,
  pp. 27--34.

\bibitem[{Br{\"u}ning and Lesch(1992)}]{BrLe1992}
\textsc{J.~Br{\"u}ning and M.~Lesch}, \emph{Hilbert complexes}, J. Funct.
  Anal., 108 (1992), pp. 88--132.

\bibitem[{Buffa and Ciarlet(2001{\natexlab{a}})}]{BuCi2001a}
\textsc{A.~Buffa and P.~Ciarlet, Jr.}, \emph{On traces for functional spaces
  related to {M}axwell's equations. {I}. {A}n integration by parts formula in
  {L}ipschitz polyhedra}, Math. Methods Appl. Sci., 24 (2001{\natexlab{a}}),
  pp. 9--30.

\bibitem[{Buffa and Ciarlet(2001{\natexlab{b}})}]{BuCi2001b}
\leavevmode\vrule height 2pt depth -1.6pt width 23pt, \emph{On traces for
  functional spaces related to {M}axwell's equations. {II}. {H}odge
  decompositions on the boundary of {L}ipschitz polyhedra and applications},
  Math. Methods Appl. Sci., 24 (2001{\natexlab{b}}), pp. 31--48.

\bibitem[{Buffa et~al.(2002)Buffa, Costabel, and Sheen}]{BuCoSh2002}
\textsc{A.~Buffa, M.~Costabel, and D.~Sheen}, \emph{On traces for {${\bf
  H}({\bf curl},\Omega)$} in {L}ipschitz domains}, J. Math. Anal. Appl., 276
  (2002), pp. 845--867.

\bibitem[{Carstensen et~al.(2016)Carstensen, Demkowicz, and
  Gopalakrishnan}]{CaDeGo2016}
\textsc{C.~Carstensen, L.~Demkowicz, and J.~Gopalakrishnan}, \emph{Breaking
  spaces and forms for the {DPG} method and applications including {M}axwell
  equations}, Comput. Math. Appl., 72 (2016), pp. 494--522.

\bibitem[{Chen et~al.(2017)Chen, Qiu, Shi, and Solano}]{ChQiShSo2017}
\textsc{H.~Chen, W.~Qiu, K.~Shi, and M.~Solano}, \emph{A superconvergent {HDG}
  method for the {M}axwell equations}, J. Sci. Comput., 70 (2017), pp.
  1010--1029.

\bibitem[{Cockburn(2016)}]{Cockburn2016}
\textsc{B.~Cockburn}, \emph{Static condensation, hybridization, and the
  devising of the {HDG} methods}, in Building bridges: connections and
  challenges in modern approaches to numerical partial differential equations,
  vol. 114 of Lect. Notes Comput. Sci. Eng., Springer, [Cham], 2016, pp.
  129--177.

\bibitem[{Cockburn and Gopalakrishnan(2004)}]{CoGo2004}
\textsc{B.~Cockburn and J.~Gopalakrishnan}, \emph{A characterization of
  hybridized mixed methods for second order elliptic problems}, SIAM J. Numer.
  Anal., 42 (2004), pp. 283--301.

\bibitem[{Cockburn and Gopalakrishnan(2005)}]{CoGo2005b}
\leavevmode\vrule height 2pt depth -1.6pt width 23pt, \emph{Incompressible
  finite elements via hybridization. {II}. {T}he {S}tokes system in three space
  dimensions}, SIAM J. Numer. Anal., 43 (2005), pp. 1651--1672.

\bibitem[{Cockburn et~al.(2009)Cockburn, Gopalakrishnan, and
  Lazarov}]{CoGoLa2009}
\textsc{B.~Cockburn, J.~Gopalakrishnan, and R.~Lazarov}, \emph{Unified
  hybridization of discontinuous {G}alerkin, mixed, and continuous {G}alerkin
  methods for second order elliptic problems}, SIAM J. Numer. Anal., 47 (2009),
  pp. 1319--1365.

\bibitem[{Cockburn et~al.(2007)Cockburn, Gopalakrishnan, and Wang}]{CoGoWa2007}
\textsc{B.~Cockburn, J.~Gopalakrishnan, and H.~Wang}, \emph{Locally
  conservative fluxes for the continuous {G}alerkin method}, SIAM J. Numer.
  Anal., 45 (2007), pp. 1742--1776.

\bibitem[{Douglas and Roberts(1985)}]{DoRo1985}
\textsc{J.~Douglas, Jr. and J.~E. Roberts}, \emph{Global estimates for mixed
  methods for second order elliptic equations}, Math. Comp., 44 (1985), pp.
  39--52.

\bibitem[{Fraeijs~de Veubeke(2001)}]{FraeijsDeVeubeke1965}
\textsc{B.~Fraeijs~de Veubeke}, \emph{Displacement and equilibrium models in
  the finite element method by {B}. {F}raeijs de {V}eubeke, {C}hapter 9, pages
  145--197 of {\it {s}tress {a}nalysis}, {E}dited by {O}. {C}. {Z}ienkiewicz
  and {G}. {S}. {H}olister, {P}ublished by {J}ohn {W}iley \& {S}ons, 1965},
  Internat. J. Numer. Methods Engrg., 52 (2001), pp. 287--342. Edited by O. C.
  Zienkiewicz and G. S. Holister and with introductory remarks by Zienkiewicz.

\bibitem[{Gastaldi and Nochetto(1989)}]{GaNo1989}
\textsc{L.~Gastaldi and R.~H. Nochetto}, \emph{Sharp maximum norm error
  estimates for general mixed finite element approximations to second order
  elliptic equations}, RAIRO Mod\'{e}l. Math. Anal. Num\'{e}r., 23 (1989), pp.
  103--128.

\bibitem[{Gibson et~al.(2020)Gibson, Mitchell, Ham, and Cotter}]{GiMiHaCo2020}
\textsc{T.~H. Gibson, L.~Mitchell, D.~A. Ham, and C.~J. Cotter}, \emph{{Slate}:
  extending {Firedrake's} domain-specific abstraction to hybridized solvers for
  geoscience and beyond}, Geosci. Model Dev., 13 (2020), pp. 735--761.

\bibitem[{Gol'dshtein et~al.(2011)Gol'dshtein, Mitrea, and Mitrea}]{GoMiMi2011}
\textsc{V.~Gol'dshtein, I.~Mitrea, and M.~Mitrea}, \emph{Hodge decompositions
  with mixed boundary conditions and applications to partial differential
  equations on {L}ipschitz manifolds}, J. Math. Sci. (N.Y.), 172 (2011), pp.
  347--400.

\bibitem[{Guyan(1965)}]{Guyan1965}
\textsc{R.~Guyan}, \emph{Reduction of stiffness and mass matrices}, AIAA J., 3
  (1965), p. 380.

\bibitem[{Hiptmair(2002)}]{Hiptmair2002}
\textsc{R.~Hiptmair}, \emph{Finite elements in computational electromagnetism},
  Acta Numer., 11 (2002), pp. 237--339.

\bibitem[{Hong et~al.(2021)Hong, Li, and Xu}]{HoLiXu2021}
\textsc{Q.~Hong, Y.~Li, and J.~Xu}, \emph{An extended {G}alerkin analysis in
  finite element exterior calculus}, 2021. {arXiv}:2101.09735~[math.NA].

\bibitem[{Jakab et~al.(2009)Jakab, Mitrea, and Mitrea}]{JaMiMi2009}
\textsc{T.~Jakab, I.~Mitrea, and M.~Mitrea}, \emph{On the regularity of
  differential forms satisfying mixed boundary conditions in a class of
  {L}ipschitz domains}, Indiana Univ. Math. J., 58 (2009), pp. 2043--2071.

\bibitem[{Kurz and Auchmann(2012)}]{KuAu2012}
\textsc{S.~Kurz and B.~Auchmann}, \emph{Differential forms and boundary
  integral equations for {M}axwell-type problems}, in Fast boundary element
  methods in engineering and industrial applications, vol.~63 of Lect. Notes
  Appl. Comput. Mech., Springer, Heidelberg, 2012, pp. 1--62.

\bibitem[{Ledger and Morgan(2005)}]{LeMo2005}
\textsc{P.~D. Ledger and K.~Morgan}, \emph{The application of the {$hp$}-finite
  element method to electromagnetic problems}, Arch. Comput. Methods Engrg., 12
  (2005), pp. 235--302.

\bibitem[{Lee(2013)}]{Lee2013}
\textsc{J.~M. Lee}, \emph{Introduction to smooth manifolds}, vol. 218 of
  Graduate Texts in Mathematics, Springer, New York, second ed., 2013.

\bibitem[{Licht(2019)}]{Licht2019}
\textsc{M.~W. Licht}, \emph{Smoothed projections and mixed boundary
  conditions}, Math. Comp., 88 (2019), pp. 607--635.

\bibitem[{Mitrea et~al.(2008)Mitrea, Mitrea, and Shaw}]{MiMiSh2008}
\textsc{D.~Mitrea, M.~Mitrea, and M.-C. Shaw}, \emph{Traces of differential
  forms on {L}ipschitz domains, the boundary de {R}ham complex, and {H}odge
  decompositions}, Indiana Univ. Math. J., 57 (2008), pp. 2061--2095.

\bibitem[{N{\'e}d{\'e}lec(1980)}]{Nedelec1980}
\textsc{J.-C. N{\'e}d{\'e}lec}, \emph{Mixed finite elements in
  {$\mathbb{R}^{3}$}}, Numer. Math., 35 (1980), pp. 315--341.

\bibitem[{N{\'e}d{\'e}lec(1986)}]{Nedelec1986}
\leavevmode\vrule height 2pt depth -1.6pt width 23pt, \emph{A new family of
  mixed finite elements in {$\mathbb{R}^{3}$}}, Numer. Math., 50 (1986), pp.
  57--81.

\bibitem[{Nguyen et~al.(2011)Nguyen, Peraire, and Cockburn}]{NgPeCo2011}
\textsc{N.~C. Nguyen, J.~Peraire, and B.~Cockburn}, \emph{Hybridizable
  discontinuous {G}alerkin methods for the time-harmonic {M}axwell's
  equations}, J. Comput. Phys., 230 (2011), pp. 7151--7175.

\bibitem[{Picard(1984)}]{Picard1984}
\textsc{R.~Picard}, \emph{An elementary proof for a compact imbedding result in
  generalized electromagnetic theory}, Math. Z., 187 (1984), pp. 151--164.

\bibitem[{Rathgeber et~al.(2017)Rathgeber, Ham, Mitchell, Lange, Luporini,
  McRae, Bercea, Markall, and Kelly}]{RaHaMiLaLuMcBeMaKe2017}
\textsc{F.~Rathgeber, D.~A. Ham, L.~Mitchell, M.~Lange, F.~Luporini, A.~T.~T.
  McRae, G.-T. Bercea, G.~R. Markall, and P.~H.~J. Kelly}, \emph{Firedrake:
  automating the finite element method by composing abstractions}, ACM Trans.
  Math. Software, 43 (2017), pp. Art. 24, 27.

\bibitem[{Raviart and Thomas(1977{\natexlab{a}})}]{RaTh1977}
\textsc{P.-A. Raviart and J.~M. Thomas}, \emph{A mixed finite element method
  for 2nd order elliptic problems}, in Mathematical aspects of finite element
  methods ({P}roc. {C}onf., {C}onsiglio {N}az. delle {R}icerche ({C}.{N}.{R}.),
  {R}ome, 1975), Springer, Berlin, 1977{\natexlab{a}}, pp. 292--315. Lecture
  Notes in Math., Vol. 606.

\bibitem[{Raviart and Thomas(1977{\natexlab{b}})}]{RaTh1977_hybrid}
\leavevmode\vrule height 2pt depth -1.6pt width 23pt, \emph{Primal hybrid
  finite element methods for {$2$}nd order elliptic equations}, Math. Comp., 31
  (1977{\natexlab{b}}), pp. 391--413.

\bibitem[{Stenberg(1991)}]{Stenberg1991}
\textsc{R.~Stenberg}, \emph{Postprocessing schemes for some mixed finite
  elements}, RAIRO Mod\'{e}l. Math. Anal. Num\'{e}r., 25 (1991), pp. 151--167.

\bibitem[{Weck(2004)}]{Weck2004}
\textsc{N.~Weck}, \emph{Traces of differential forms on {L}ipschitz
  boundaries}, Analysis (Munich), 24 (2004), pp. 147--169.

\end{thebibliography}
\end{document}